\documentclass{article}
\usepackage[utf8]{inputenc}
\usepackage{amsthm}
\usepackage{amsmath}
\usepackage{comment}
\usepackage{amsfonts}
\usepackage{color}
\usepackage{latexsym}
\usepackage{geometry}
\usepackage{enumerate}
\usepackage[shortlabels]{enumitem}
\usepackage{csquotes}
\usepackage{graphicx}
\usepackage{comment}
\usepackage{appendix}
\usepackage{hyperref}
\usepackage{esint}
\usepackage{bm}
\usepackage{amssymb}

\hypersetup{
    colorlinks=true,
    linkcolor=blue,
    filecolor=magenta,      
    urlcolor=cyan,
}

\emergencystretch=\maxdimen
\def \cR {\mathcal{R}}
\def \cX {\mathcal{X}}

\def \bF {\mathbb{F}}
\def \fC {\mathfrak{C}}
\def \cS {\mathcal{S}}

\def \ep {\varepsilon}
\def \bN {\mathbb{N}}

\def \fg {\mathfrak{g}}

\DeclareMathOperator{\Rm}{Rm}
\DeclareMathOperator{\Ric}{Ric}

\DeclareMathOperator{\Var}{Var}
\DeclareMathOperator{\AVR}{AVR}
\DeclareMathOperator{\loc}{loc}

\makeatletter
\newcommand*{\rom}[1]{\rm {\expandafter\@slowromancap\romannumeral #1@}}
\makeatother

\def\XXint#1#2#3{{\setbox0=\hbox{$#1{#2#3}{\int}$ }
\vcenter{\hbox{$#2#3$ }}\kern-.6\wd0}}

\protected\def\vts{%
  \ifmmode
    \mskip0.5\thinmuskip
  \else
    \ifhmode
      \kern0.08334em
    \fi
  \fi
}

\numberwithin{equation}{section}
\newtheorem{Theorem}{Theorem}[section]
\newtheorem{Proposition}[Theorem]{Proposition}
\newtheorem{Lemma}[Theorem]{Lemma}
\newtheorem{Corollary}[Theorem]{Corollary}
\newtheorem{Conjecture}[Theorem]{Conjecture}
\theoremstyle{definition}
\newtheorem{Definition}[Theorem]{Definition}

\title{Ancient Ricci flows with nonnegative Ricci curvature}
\author{Yuxing Deng\footnote{Yuxing Deng's research is  supported by  National Key R$\&$D Program of China 2022YFA1007600 and National Natural Science Foundation of China NSFC12571059.}, Ganqi Wang, Yongjia Zhang\footnote{Yongjia Zhang's research is  supported by National Natural Science Foundation of China NSFC12301076.}}
\date{}

\begin{document}
\maketitle
\begin{abstract}

In this paper, we study the asymptotic geometry of a noncollapsed ancient Ricci flow with nonnegative Ricci curvature via its tangent flow at infinity --- a noncollapsed $\mathbb{F}$-limit metric soliton \cite{Bam23,CMZ23}. We first prove some estimates for noncollapsed $\mathbb{F}$-limit metric solitons with nonnegative Ricci curvature, and then obtain two dichotomy theorems for ancient Ricci flows.  In particular, we show that: (1)  for a noncollapsed ancient Ricci flow with nonnegative Ricci curvature, either its asymptotic volume ratio is always zero, or every tangent flow at infinity is a Ricci flat cone; (2) for a noncollapsed ancient Ricci flow with positively pinched Ricci curvature ($\Ric\ge \varepsilon R g$), either  it is compact, or every tangent flow at infinity is a Ricci flat cone.
\end{abstract}
%\tableofcontents
\section{Introduction}

In \cite{Per02}, Perelman proved the existence of an asymptotic shrinker for every $\kappa$-solution; by $\kappa$-solution we mean noncollapsed ancient solution with nonnegative curvature operator. This result sheds much light on the asymptotic geometry for $\kappa$-solutions, and is therefore an important part of the proof of the geometrization conjecture \cite{Ham93,Per02,Per03a,Per03b}, since all three-dimensional singularity models are $\kappa$-solutions. For instance, combining the existence of asymptotic shrinker and the classification of three-dimensional Ricci shrinkers, Perelman showed that there is a universal noncollapsing constant $\kappa_0>0$ for all $\kappa$-solutions. There are many other examples where asymptotic shrinkers are implemented in the study of ancient solutions; see \cite{Nab10,CZ11,Zhang20,CMZ25}, to name but a few.

For general noncollapsed ancient solutions, asymptotic shrinkers, unfortunately, do not necessarily exist. Its existence is known to be true either in the case of Type I curvature bound \cite{Nab10}, or in the case where there holds Hamilton's trace Harnack estimate, such as the PIC2 case \cite{Bre09}. In general, because of the lack of the ``bounded curvature at bounded distance condition'' for the blow-down sequence, the asymptotic limit for an ancient Ricci flow either does not exist, or, even if it existed, may not be smooth.

Bamler's works \cite{Bam20a,Bam20b,Bam23} provide many new ideas and new techniques for sequences of Ricci flows that do not necessarily converge smoothly. Viewing a Ricci flow coupled with a conjugate heat kernel as an evolving metric measure space (called a metric flow pair, see \S 2), he defined a weak notion of convergence called the $\mathbb{F}$-convergence for a sequence of metric flow pairs, which, roughly speaking, is tantamount to the Gromov-Wasserstein convergence for almost every time-slice. It is proved in \cite{Bam23} that every sequence of $H$-concentrated metric flow pairs, after passing to a subsequence, converges in the $\mathbb{F}$-sense. More details can be found in \S 2. In a recent work of Fang-Li \cite{FL25}, a new notion of weak convergence is introduced.

If a sequence of metric flow pairs (constructed with Ricci flow of the same dimension) is noncollapsed, namely, admits a uniform bound for the Nash entropy, then its $\mathbb{F}$-limit is an almost-everywhere-smooth metric flow pair, with its singular part  a set of space-time Minkowski codimension no less than four. Furthermore, the convergence is also smooth on the regular part of the limit flow. In addition, if the Nash entropies of the sequence converge to a constant, then the $\mathbb{F}$-limit is a self-similar metric flow pair called a \emph{noncollpased $\mathbb{F}$-limit metric soliton}; see \S 2 for more details. This object is exactly the asymptotic limit for general noncollapsed ancient solutions corresponding to Perelman's asymptotic shrinker.

Noncollpased $\mathbb{F}$-limit metric soliton is understood to be a natural generalization of smooth Ricci shrinker. We briefly summarize some of its properties proved in \cite{Bam20b}. Sometimes, because of its self-similarity, a noncollapsed $\mathbb{F}$-limit metric solitons is not distinguished from its model, a metric mesure space $(X,d,\nu)$, which is identified with the $t=-1$ time-slice of the self-similar metric flow pair. Furthermore, the model is fully determined by its regular part $(\mathcal{R}_X,\fg,f_0)$, since the metric completion of $(\mathcal{R}_X,d_{\fg})$ is $(X,d)$ and the singular part is a null-set, i.e.
\begin{align*}
    \nu(X\setminus\mathcal{R}_X)=0,
\end{align*}
with Minkowski codimension no less that four. Here, $f_0$ is a smooth function on $\mathcal{R}_X$ called the potential function, and $\fg$ and $f_0$ satisfy the shrinker equation on $\mathcal{R}_X$:
\begin{align*}
    \Ric_{\fg}+\nabla^2_{\fg} f_0=\frac{1}{2}\fg\quad\text{ and }\quad  d\nu=  (4\pi)^{-\frac{n}{2}}e^{-f_0}d\fg\quad
     \quad \text{ on }\qquad \mathcal{R}_X,
\end{align*}

Because of its importance as generalized singularity models, \cite{CMZ23} studied systematically noncollpased $\mathbb{F}$-limit metric solitons, to which many classical results for Ricci shrinkers are extended. For instance, the potential function $f_0$ has almost quadratic growth from a point $x_0\in\mathcal{R}_X$. Such point is called a \emph{center} of the soliton, and henceforth will always be denoted by $x_0$; see \S 2. Furthermore, a global Sobolev inequality, a quadratic lower bound estimate for the scalar curvature, a volume growth lower bound, and a local gap theorem for large balls around a center are proved in \cite{CMZ23}.

To summarize the points we have made by far, let us recall the following facts: Perelman's asymptotic shrinker is a powerful tool for the study of the asymptotic geometry of $\kappa$-solutions; general noncollapsed ancient solutions do not have asymptotic shrinkers, but their tangent flows at infinity are noncollpased $\mathbb{F}$-limit metric solitons; the convergence of the blow-down sequence of an ancient solution to its tangent flow at infinity is smooth on the smooth part of the limit flow; the tangent flow at infinity is almost everywhere a smooth shrinker. It is therefore clear that, by studying noncollpased $\mathbb{F}$-limit metric solitons, especially its smooth part, we can obtain some results concerning the asymptotic geometry of noncollapsed ancient solutions. The main results of this paper are in this fashion.

Before introducing our results, a few clarifications need to be made for our terminologies. Our notion of noncollapsedness is somewhat different from Perelman's, but is identical to Bamler's, namely, our noncollapsing condition is defined in terms of the boundedness of the Nash entropy. By saying that an ancient Ricci flow $(M^n,g_t)_{t\in(-\infty,0]}$ is noncollapsed, we mean that for some $(x,t)\in M\times (-\infty,0]$ and $Y>0$, we have
\begin{align*}
    \mathcal{N}_{x,t}(\tau)\ge -Y \quad \text{ for all }\quad \tau>0.
\end{align*}
By saying that a sequence $\{(M^i,g^i_{t},x_i)_{t\in[-T_i,0]}\}_{i=1}^\infty$ of $n$-dimensional Ricci flows is noncollapsed, we mean that for some $\tau>0$ and $Y<\infty$, we have
\begin{align*}
    \mathcal{N}_{x_i,0}(\tau)\ge -Y\quad \text{ for all }\quad i.
\end{align*}
A \emph{noncollapsed} $\mathbb{F}$-limit metric soliton is therefore the $\mathbb{F}$-limit of a \emph{noncollapsed} sequence of Ricci flows.

We first introduce our results on noncollpased $\mathbb{F}$-limit metric solitons with nonnegative Ricci curvature, and then we present their applications to ancient Ricci flows. For the definitions that appear in the statements of the theorems, refer to \S 2. Our first result shows that on the smooth part of a non-Ricci-flat metric soliton with nonnegative Ricci curvature, the scalar curvature must be bounded from below by a positive constant; this result is analogous to \cite[Proposition 1.1]{Ni05}.

\begin{Theorem}\label{thmR>}
    Let $(X,d,\nu)$ be (the model of) a noncollapsed $\mathbb{F}$-limit metric soliton and $(\mathcal{R}_X,\fg,f_0)$ its regular part. Assume that the Ricci curvature is non-negative on $\mathcal{R}_X$ and does not vanish everywhere. Then there exists a positive number $\delta = \delta(X)\in(0,1]$ such that
 $$R(x)\ge\delta,\quad\text{for all}\quad x\in \mathcal{R}_X.$$
\end{Theorem}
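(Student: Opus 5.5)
We adapt Ni's argument for smooth shrinkers with $\Ric\ge 0$ to the regular part $\mathcal{R}_X$, using the properties of noncollapsed $\mathbb{F}$-limit metric solitons from \cite{Bam20b,CMZ23} recalled in \S 2. On $\mathcal{R}_X$ we have the shrinker identities
\begin{align*}
\Delta_f R = R - 2|\Ric|^2,\qquad \Delta_f \Ric = \Ric - 2\Rm*\Ric,
\end{align*}
where $\Delta_f=\Delta-\nabla f_0\cdot\nabla$. We also have $R\ge 0$ and $R+|\nabla f_0|^2=f_0$ (after normalization), and $f_0$ grows almost quadratically from a center $x_0$.

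\textbf{Step 1: $R>0$ on $\mathcal{R}_X$.} Since $\Ric\ge0$, we have $\Delta_f R\le R$, so $R$ is a nonnegative supersolution of a linear elliptic equation. By the strong maximum principle, on each connected component of $\mathcal{R}_X$ either $R>0$ or $R\equiv0$. We use that $\mathcal{R}_X$ is connected, which holds because the singular set has codimension $\ge 4$ (this is in \cite{Bam20b}). If $R\equiv 0$, then $\Ric\equiv0$ (trace of a nonnegative tensor), contradicting the assumption. Hence $R>0$ everywhere on $\mathcal{R}_X$.

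\textbf{Step 2: a uniform lower bound.} Following Ni, we compare $R$ with a barrier built from $f_0$. Consider $u=R\,f_0^{a}$, or more simply use
\begin{align*}
\Delta_f f_0 = \tfrac n2 - f_0 .
\end{align*}
From this, $\Delta_f R\le R$ shows that $R$ is a supersolution of $\Delta_f - 1$. On a compact region $\{f_0\le r_0\}\cap\mathcal{R}_X$ away from the singular set, $R\ge c>0$. Outside this region we want a minimum principle showing $\inf R$ cannot be approached at infinity or near the singular set without a positive lower bound. The cleanest route is the integral/heat-flow version. Under the self-similar flow, $R(\cdot,t)=|t|^{-1}R\circ\phi_t$, and on a Ricci flow $\partial_tR=\Delta R+2|\Ric|^2\ge\Delta R$. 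The minimum principle for supersolutions of the heat equation, $\min R(\cdot,t)\ge\min R(\cdot,t_0)$, should then be justified on the metric flow using the conjugate heat kernel representation
\begin{align*}
R(x,t)\ge \int R(\cdot,s)\,d\nu_{x,t;s}.
\end{align*}
On smooth approximants this comes from $\square R\ge0$ via the reproduction formula, and it passes to the $\mathbb{F}$-limit since convergence is smooth on $\mathcal{R}$ and the singular set is $\nu$-null. Take $s=-1$ and $x$ at time $t\in(-1,0)$. Then $\int_X R\,d\nu_{x,t;-1}$ is bounded below by the integral of $R$ over a fixed compact subset $K\subset\mathcal{R}_X$ where $R\ge c$, times $\nu_{x,t;-1}(K)$. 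Rescaling back to time $-1$ via self-similarity yields $R(y)\ge |t|^{-1}\cdot c\,\nu_{x,t;-1}(K)$.

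\textbf{Main obstacle.} The hard part is a uniform lower bound on $\nu_{x,t;-1}(K)$ as $x$ ranges over all of $\mathcal{R}_X$, including points near infinity and near singularities. For this, choose $t$ with $|t|$ small relative to the spread, e.g. pick $t$ depending on $x$ so that, after the self-similar identification $x\mapsto\phi(x)$, the conjugate heat kernel based at $x$ spreads to scale $\sqrt{1-|t|}$ comparable to $d(x_0,x)$. Use Bamler's Gaussian lower bounds / $H$-center estimates, with the $H$-center near $x_0$ after rescaling, and the noncollapsing (Nash entropy bound) to get a heat kernel lower bound on $K$. The resulting bound $R(y)\ge c'$ must be uniform, and this is where the almost-quadratic growth of $f_0$ and the volume growth estimates of \cite{CMZ23} enter. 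Alternatively, one can argue by contradiction: take $y_i$ with $R(y_i)\to0$, pass to a limit, and use Step 1 together with a compactness theorem for metric solitons. I expect the heat kernel lower bound to be the crux.
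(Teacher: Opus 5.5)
There is a genuine gap. Your Step~2 is never completed, and the route you sketch cannot complete it, because it never uses $\Ric\ge 0$. The identities $\Delta_{f_0}R=R-2|\Ric|^2\le R$ and $(\partial_t-\Delta)R=2|\Ric|^2\ge 0$ hold on every soliton; your Step~1 attributes the first to $\Ric\ge0$, but it is unconditional. The same is true of Bamler's heat kernel and $H_n$-center bounds, the entropy bound, the growth of $f_0$, and the volume estimates of \cite{CMZ23}. Yet the conclusion is false without the Ricci sign. Asymptotically conical smooth shrinkers (e.g.\ those of Feldman--Ilmanen--Knopf) are non-Ricci-flat noncollapsed $\mathbb{F}$-limit metric solitons with $R\to 0$ at infinity.

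Concretely, the uniform lower bound on $\nu_{x,t;-1}(K)$ that you call the crux does not exist. For $x$ far out, the kernel puts non-negligible mass on a fixed compact $K$ only when $|t|$ is of order $d_{\fg}(x_0,x)^{-2}$. The self-similar rescaling $R_{\fg}(x)=|t|\,R(x,t)$ (the factor is $|t|$, not $|t|^{-1}$) then costs exactly that factor. So this mechanism only reproduces the decaying bound $R\ge c/(f_0-W+C_0)$ of Theorem~\ref{Thm:preliminary scalar lower bound}.

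The contradiction alternative also does not work. The pointwise positivity from Step~1 is not stable under limits, and limits based at points escaping to infinity or to $X\setminus\mathcal{R}_X$ are not solitons to which Step~1 applies. Note too that $\{f_0\le r_0\}\cap\mathcal{R}_X$ need not be compact. Even on bounded regions, a positive lower bound does not follow from $R>0$; it needs Theorem~\ref{Thm:preliminary scalar lower bound}.

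The missing idea is where $\Ric\ge0$ really enters. By \eqref{eq:Ricci identity and f}, $\langle\nabla f_0,\nabla R\rangle=2\Ric(\nabla f_0,\nabla f_0)\ge 0$, so $R$ is nondecreasing along integral curves of $\nabla f_0$. These curves are complete on $\mathcal{R}_X$ by Theorem~\ref{thmbase}(a). The paper's argument runs as follows.
\begin{enumerate}
\item Fix a large $D$. Theorems~\ref{thm<f<} and~\ref{Thm:preliminary scalar lower bound} give $R\ge\delta_0(D)$ on $B_{\fg}(x_0,D)\cap\mathcal{R}_X$; this is also what handles points near the singular set.
\item For $x$ outside the ball with $R(x)\le 1$, follow the integral curve $\sigma$ of $\nabla f_0$ backward from $x$. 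Monotonicity gives $R\le 1$ along $\sigma$.
\item As long as $\sigma$ stays outside the ball, $|\nabla f_0|^2=f_0-W-R\ge\tfrac15 D^2-C>0$. Hence $f_0$ decreases at a definite linear rate in backward time.
\item Since $f_0-W$ is bounded below, this cannot persist. So $\sigma$ enters $B_{\fg}(x_0,D)$, and monotonicity gives $R(x)\ge\delta_0$, hence $\delta=\min\{\delta_0,1\}$.
\end{enumerate}
This ODE argument along $\nabla f_0$ replaces Ni's second-variation argument, which is unavailable because geodesics may leave $\mathcal{R}_X$. Your heat-kernel approach avoids that issue too, but only by discarding the Ricci sign, which is exactly what makes the uniform bound true.
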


In spite of the similarity of the conclusion of the above result to \cite[Proposition 1.1]{Ni05}, our proof is somewhat different, since we cannot estimate the Ricci curvature along a geodesic by the second variational formula;    the regular part of a metric soliton is not necessarily convex, and any geodesic may possibly exceed the regular part. Therefore, we estimate the scalar curvature along integral curves of $\nabla f_0$ instead.

Our next result on metric soliton is a volume estimate of the distance ball $B_{\fg}(x_0,A)\cap\cR_X$, which is not necessarily a geodesic ball since $(\mathcal{R}_X,\fg)$ is generally not complete. For complete smooth Ricci shrinkers, Cao and Zhou \cite{CZ10} showed  $|B(x_0,A)|\le CA^n$, where $C$ is a constant depending on the soliton; an improvement was given by Haslhofer-M\"uller \cite{HM11}, showing that if $x_0$ is a minimum point of $f$, then $C$ is a dimensional constant; a recent work of Li-Wang \cite{LW24} further improved Cao-Zhou's estimate, showing that $C$ is a dimensional constant for any $x_0$; if the scalar curvature of the shrinker is bounded from below by a positive constant, namely, $R\ge \delta>0$, then Zhang \cite{Zhang11} gave a sharp volume growth estimate of the form $|B(x_0,A)|\le CA^{n-2\delta}$, where $C$ is a constant depending on the soliton, and the sharpness of the estimate is seen in the spherical and cylindrical cases. We generalize \cite[Theorem 1.2]{Zhang11} to distant balls on the regular part of a metric soliton.
\begin{Theorem}\label{thmavr}
   Let $(X,d,\nu)$ be (the model of) a noncollapsed $\mathbb{F}$-limit metric soliton and $(\mathcal{R}_X,\fg,f_0)$ its regular part.  Assume the scalar curvature is bounded from below by some $\delta>0$ on $\mathcal{R}_X$. Then we have 
    $$|B_{\fg}(x_0,A)\cap\cR_X|\le CA^{n-2\delta}\quad \text{ for all }\quad A\ge 1,$$
    where $x_0$ is a center of the soliton, and $C$ is a constant depending on the soliton.
\end{Theorem}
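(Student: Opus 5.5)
We follow Zhang's argument for smooth shrinkers \cite{Zhang11}, using sublevel sets of the potential in place of geodesic balls, and using the regularity theory of \cite{Bam20b,CMZ23} to justify the integrations by parts on the incomplete manifold $\cR_X$. Normalize $f_0$ so that $R+|\nabla f_0|^2=f_0$ on $\cR_X$; this identity holds on the regular part of a metric soliton by \cite{CMZ23}. Set $\rho=2\sqrt{f_0}$ and
\begin{align*}
D(r)=\{x\in\cR_X:\rho(x)<r\},\qquad V(r)=|D(r)|_{\fg},\qquad \chi(r)=\int_{D(r)}R\,d\fg .
\end{align*}
By the almost quadratic growth of $f_0$ from a center $x_0$ \cite{CMZ23}, we have $B_{\fg}(x_0,A)\cap\cR_X\subset D(A+C_0)$ for $A\ge1$. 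Hence it suffices to show $V(r)\le Cr^{n-2\delta}$ for large $r$.

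\textbf{Step 1: the basic identity.} Trace the soliton equation to get $R+\Delta f_0=n/2$. Integrate over $D(r)$ and apply the divergence theorem on the level set $\{\rho=r\}$, whose outward normal is $\nabla f_0/|\nabla f_0|$. This gives
\begin{align*}
\frac n2 V(r)-\chi(r)=\int_{\partial D(r)}|\nabla f_0|.
\end{align*}
The co-area formula gives $V'(r)=\int_{\partial D(r)}\frac{r}{2|\nabla f_0|}$. On $\partial D(r)$ we have $|\nabla f_0|^2=r^2/4-R$, so
\begin{align*}
\int_{\partial D(r)}|\nabla f_0|=\frac{2}{r}\int_{\partial D(r)}\frac{|\nabla f_0|^2}{r/2}\cdot\frac r2\cdot\frac1{|\nabla f_0|}\cdot\frac r2\cdot\frac 2r\;=\;\frac r2V'(r)-\frac 2r\chi'(r).
\end{align*}
Combining the two displays yields
\begin{align*}
nV-rV'=2\chi-\frac4r\chi'.
\end{align*}

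\textbf{Step 2: the differential inequality.} Write the identity as $\frac{d}{dr}\big(r^{-n}V\big)=\frac{4}{r^{n+2}}\chi'-\frac{2}{r^{n+1}}\chi$. Together with $\chi\ge\delta V$, a Gronwall-type argument bounds $V$ by $r^{n-2\delta}$ times a factor controlled by $r^{-2}\chi(r)/V(r)$. The extra term is handled as in \cite{Zhang11}: use $\chi\le\frac n2V$ and integrate by parts in $r$. One then obtains
\begin{align*}
\frac{d}{dr}\log V\le\frac{n-2\delta}{r}+O(r^{-3})\cdot\frac{\chi'}{V}.
\end{align*}
Integrating gives $V(r)\le Cr^{n-2\delta}$. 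Sketching this: the function $\Phi(r)=r^{-n}V-4r^{-n-2}\chi$ satisfies $\Phi'\le-\frac{2\delta}{r}\,r^{-n}V+O(r^{-3})\,r^{-n}V$. Since $\chi\ge0$ gives $\Phi\le r^{-n}V$, and $\chi\le\frac n2V$ makes $\Phi$ comparable to $r^{-n}V$ for large $r$, we get $\Phi(r)\le Cr^{-2\delta}$ and hence the claim.

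\textbf{Main obstacle: justifying Step 1 on the incomplete $\cR_X$.} The boundary terms near the singular set $X\setminus\cR_X$ must vanish. We plan to use cutoff functions $\eta_\epsilon$ supported away from the singular set, with $\int|\nabla\eta_\epsilon|\to0$. These exist because the singular set has Minkowski codimension at least four, in fact at least two in a single time slice suffices \cite{Bam20b,CMZ23}. With the local bounds $|\nabla f_0|\le C\sqrt{f_0}$ and the integrability of $R$ on bounded sets, the error terms vanish as $\epsilon\to0$. The identities then hold in integrated (weak, in $r$) form. Since $V$ and $\chi$ are monotone, hence a.e. differentiable, and absolutely continuous by the co-area formula, the ODE argument of Step 2 carries through.
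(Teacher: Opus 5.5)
Your proposal is correct and is essentially the paper's argument: your $\Phi(r)=r^{-n}V-4r^{-n-2}\chi$ is exactly the paper's $P(s)$, whose derivative $-s^{-n-3}\chi(2s^2-4n-8)$ is controlled using $\delta V\le\chi\le\frac{n}{2}V$ and integrated from a large $s_0$. The one difference is that the paper takes the identity $\frac{n}{2}V-\frac{s}{2}V'=\chi-\frac{2}{s}\chi'$ and the absolute continuity of $V,\chi$ directly from \cite[Lemma 8.4]{CMZ23}, rather than re-deriving them with cutoffs near the singular set; two small slips are harmless, namely the middle expression in your Step 1 chain equals $\frac{2}{r}\int_{\partial D(r)}|\nabla f_0|$ rather than $\int_{\partial D(r)}|\nabla f_0|$ (the two ends of the chain are right), and the coefficient of $\chi'/V$ in your bound for $(\log V)'$ is $4r^{-2}$, not $O(r^{-3})$, which does not matter since the $\Phi$ computation bypasses that inequality.
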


Next, we shall consider the asymptotic volume ratio ($\AVR$) of a metric soliton. For a complete Riemannian manifold $(M^n,g)$, the $\AVR$ is defined to be
\begin{align*}
    \AVR:=\lim_{A\to+\infty}\frac{|B_{g}(x,A)|}{\omega_n A^n},
\end{align*}
where $x_0$ is a fixed point on $M$. Note that the $\AVR$ may not exist for a manifold. But for manifolds with nonnegative Ricci curvature, it always exists and is independent of $x$. A result by Chow-Lu-Yang \cite{CLY12} shows that the $\AVR$ is well-defined for every Ricci shrinker, and we prove the same result for noncollapsed $\mathbb{F}$-limit metric solitons.

\begin{Theorem}\label{thm existence of AVR}
    Let $(X,d,\nu)$ be (the model of) a noncollapsed $\mathbb{F}$-limit metric soliton, $(\mathcal{R}_X,\fg,f_0)$ be its regular part, and $x_0$ be a center of the metric soliton.  Then the limit 
    $$\lim_{A\to+\infty}\frac{|B_{\fg}(x_0,A)\cap\cR_{X}|}{\omega_n A^n}\in[0,C(n)e^{W}]$$
    exists, where $\omega_n$ denotes the volume of the unit Euclidean ball and $W\le 0$ is the soliton entropy. We define the limit as the $\AVR$ of a metric soliton.
\end{Theorem}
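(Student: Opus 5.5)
I will follow the Chow--Lu--Yang argument for smooth shrinkers and adapt it to the regular part. The idea is to replace distance balls by the sublevel sets $D(r):=\{x\in\cR_X: 2\sqrt{f_0(x)}<r\}$, on which an almost-monotonicity formula is available, and then return to balls using the almost quadratic growth of $f_0$ from the center $x_0$ proved in \cite{CMZ23}. As in the smooth case, I normalize $f_0$ so that $R+|\nabla f_0|^2=f_0$ and $\Delta f_0+R=\frac n2$ hold on $\cR_X$. This normalization is part of the structure theory of \cite{Bam20b,CMZ23}, and it also fixes the soliton entropy $W$.

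Set $V(r)=|D(r)|$ and $\chi(r)=\int_{D(r)}R$. On the smooth shrinker one integrates $\Delta f_0 = \frac n2 - R$ over $D(r)$ and uses the coarea formula with $|\nabla f_0|^2=f_0-R$. This gives $nV(r)-rV'(r)=2\chi(r)-\frac{4}{r}\chi'(r)$, and hence
$$\frac{d}{dr}\Big(\frac{V(r)}{r^n}-\frac{4\chi(r)}{r^{n+2}}\Big)=\frac{8\chi(r)}{r^{n+3}}\Big(\frac n2 - \frac{\chi(r)}{V(r)}\Big)\cdot\frac{1}{?}\ \ \text{(schematically)}\ \ge -\frac{C\chi}{r^{n+3}}.$$
The same two facts, $\chi(r)\le \frac n2 V(r)$ and $V(r)\le Cr^n$, give $\chi(r)\le Cr^n$. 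Therefore $\frac{V(r)}{r^n}-\frac{4\chi(r)}{r^{n+2}}$ is monotone up to an integrable error, so its limit exists. Since $4\chi/r^{n+2}=O(r^{-2})\to0$, the limit $\lim_{r\to\infty} V(r)/r^n$ exists.

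To justify the integration by parts on the incomplete $\cR_X$, I would use the cutoff functions of \cite{Bam20b,CMZ23}. These exist because the singular set has Minkowski codimension at least four, so cutoffs $\eta_\epsilon$ supported away from the $\epsilon$-neighbourhood of $X\setminus\cR_X$ satisfy $\int|\nabla\eta_\epsilon|^2\to0$ on bounded regions. One computes $\int_{D(r)}\eta_\epsilon\Delta f_0$ and passes to the limit. The boundary contributions near the singular set vanish, since $|\nabla f_0|$ is locally bounded ($|\nabla f_0|^2\le f_0$ because $R\ge0$ on metric solitons) and $\int|\nabla \eta_\epsilon|\to 0$. Working with the integrated identities rather than derivatives, for a.e.\ $r$ via coarea, avoids any regularity issue for $V$. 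The polynomial upper bound $V(r)\le C r^n$ comes from the volume estimates in \cite{CMZ23}, or from the identity itself.

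Next I relate $V$ to balls. The estimate from \cite{CMZ23} gives $\frac14(d(x_0,x)-C)_+^2\le f_0(x)\le\frac14(d(x_0,x)+C)^2$ with $d$ the distance of $X$. On $\cR_X$ I must compare $d$ with $d_{\fg}$. These coincide because $(X,d)$ is the metric completion of $(\cR_X,d_{\fg})$ and the singular set has codimension $\ge 4$, so $\cR_X$ is geodesically almost convex. It follows that $D(A-C)\subset B_{\fg}(x_0,A)\cap\cR_X\subset D(A+C)$, and the limits of $V(r)/\omega_n r^n$ and of the ball ratio agree.

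Finally, for the bound $C(n)e^{W}$, I would use the relation $\int_{\cR_X}(4\pi)^{-n/2}e^{-f_0}=1$ together with the definition of $W$ via $\int (R+|\nabla f_0|^2+f_0-n)d\nu$. Writing the measure in layers gives $\int e^{-f_0}\ge \int e^{-r^2/4}\,dV(r)\gtrsim \AVR\cdot C(n)$, with the entropy shift entering through the normalization constant of $f_0$. The main obstacle is the careful justification of the integration by parts and coarea on the incomplete regular part, including showing that the boundary terms coming from the singular set vanish. For that step I would rely on the Minkowski codimension four estimate of \cite{Bam20b}.
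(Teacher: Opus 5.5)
Your overall route matches the paper's. You prove monotonicity of $P(r)=V(r)r^{-n}-4\chi(r)r^{-n-2}$ on sublevel sets of $\rho=2\sqrt{f_0-W}$, using $nV-rV'=2\chi-\frac{4}{r}\chi'$ and $\chi\le\frac{n}{2}V$, and then compare sublevel sets with balls. The paper quotes that identity from \cite[Lemma 8.4]{CMZ23} rather than redoing the cutoff argument.

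\textbf{Main gap: the derivative of $P$.} You never compute $P'$ (your formula contains a placeholder), and the bound you assert in its place, $P'\ge -C\chi r^{-n-3}$, is false. Substituting $V'=\frac{n}{r}V-\frac{2}{r}\chi+\frac{4}{r^{2}}\chi'$ gives exactly
\begin{align*}
P'(r)=-\frac{\chi(r)}{r^{n+3}}\bigl(2r^2-4n-8\bigr).
\end{align*}
Its leading term is $-2\chi r^{-n-1}$. In the non-Ricci-flat case $\chi>0$, so for large $r$ this lies far below $-C\chi r^{-n-3}$. With only $\chi\le Cr^n$ available, the negative part is of order $r^{-1}$, which is not integrable. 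So the ``almost non-decreasing and bounded above'' argument does not close.

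The correct argument runs the other way. Since $R\ge 0$, $\chi\ge0$, so $P'\le 0$ for $r\ge\sqrt{2n+4}$. Also $\chi\le\frac n2 V$ gives $(1-\frac{2n}{r^2})Vr^{-n}\le P\le Vr^{-n}$. Hence $P$ is eventually non-increasing and nonnegative, $\lim P$ exists, and it equals $\lim Vr^{-n}$. The bound $V\le Cr^n$ is then a consequence, not an input.

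This direction is also what your layer-cake bound silently needs. To get a constant depending only on $n$, you must know $V(r)r^{-n}\ge P(r)\ge \omega_n\AVR$ for $r\ge\sqrt{2n+4}$. Then
$(4\pi)^{n/2}e^{W}=\int_{\cR_X}e^{-\rho^2/4}\,d\fg\ge e^{-(n+2)/2}V(\sqrt{2n+4})\ge c(n)\AVR$.
With this repair, your derivation of $\AVR\le C(n)e^{W}$ is a valid self-contained alternative to the paper's appeal to \cite[Corollary 1.3]{CMZ23}.

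\textbf{Smaller gap: the ball comparison.} The inclusion $D(A-C)\subset B_{\fg}(x_0,A)\cap\cR_X$ uses the sharp lower bound $f_0-W\ge\frac14(d-C)_+^2$. That is the smooth-shrinker estimate, proved by second variation along minimizing geodesics, and here such geodesics may leave $\cR_X$. For metric solitons the paper only uses $f_0-W\ge\frac{1}{4+\varepsilon}d_{\fg}^2-C(n,\varepsilon)$ from \cite[Lemma 7.12]{FL25}. This gives only $D(s)\subset B_{\fg}\bigl(x_0,\sqrt{1+\varepsilon}\,(s+C(\varepsilon))\bigr)$. So the $\liminf$ and $\limsup$ of $V(s)s^{-n}$ and of the ball ratio agree only up to a factor $(1+\varepsilon)^{n/2}$. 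You must let $\varepsilon\to0$ after $s\to\infty$, as in Lemma \ref{lem limlim}.

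The identification $d=d_{\fg}$ on $\cR_X$ is immediate from $(X,d)$ being the completion of $(\cR_X,d_{\fg})$; no almost-convexity is needed.
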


For a Ricci flow, the $\AVR$ is defined time-wise, namely, it is a function of time, should it exist. Perelman proved that the $\AVR$ is always zero for a non-flat $\kappa$-solution; this is an important element in his proof of the $\kappa$-compactness theorem \cite[\S 10]{Per02}. Therefore, it is also interesting to study the $\AVR$ in more general cases. 
If we replace $\Rm\ge0$ in \cite[11.4]{Per02} by $\Ric\ge0$, we have the following result.
\begin{Theorem}\label{thm tangent flow character1}
    Let $(M^n,g_t)_{t\in(-\infty,0]}$ be a noncollapsed ancient Ricci flow with bounded curvature on each compact time interval. Suppose $\Ric\ge0$ on $M\times(-\infty,0].$ Then either 
    \begin{enumerate}[(i)]
        \item every tangent flow at infinity is a Ricci-flat cone, or
        \item  $\AVR_{g_t}\equiv 0$ for all $t\le 0$.
    \end{enumerate}
\end{Theorem}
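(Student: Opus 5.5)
We argue by dichotomy on the tangent flow at infinity. Suppose (i) fails, so some tangent flow at infinity $(X,d,\nu)$, obtained as the $\mathbb{F}$-limit of the blow-down sequence $g^i_t=\tau_i^{-1}g_{\tau_i t}$ with $\tau_i\to\infty$ based at $(x,0)$, is not a Ricci-flat cone. We aim to show $\AVR_{g_t}\equiv 0$. Two preliminary facts will be used.

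First, since $\Ric\ge0$ is scale invariant and the convergence is smooth on the regular part, $\Ric_{\fg}\ge0$ on $\cR_X$. Second, if $\Ric_{\fg}\equiv0$ on $\cR_X$, the shrinker equation gives $\nabla^2 f_0=\frac12\fg$. The standard argument for noncollapsed $\mathbb{F}$-limit metric solitons (cf. \cite{CMZ23}) then shows that $X$ is a Ricci-flat metric cone with vertex at the minimum of $f_0$. Hence in our case $\Ric_{\fg}$ does not vanish identically.

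Theorem \ref{thmR>} then gives $R\ge\delta>0$ on $\cR_X$. Theorem \ref{thmavr} gives $|B_{\fg}(x_0,A)\cap\cR_X|\le CA^{n-2\delta}$, so by Theorem \ref{thm existence of AVR} the $\AVR$ of the metric soliton is $0$.

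Next we transfer this to the flow. Since $\Ric\ge0$, each $\AVR_{g_t}$ exists. Moreover $\partial_t g=-2\Ric\le0$, so distances and volumes are nonincreasing in $t$. The standard fact that the $\AVR$ is constant in time under Ricci flow with $\Ric\ge0$ and bounded curvature on compact time intervals (distance distortion $d_{t_1}\le d_{t_2}+C\sqrt{t_2-t_1}$, Hamilton's/Perelman's lemma using $\Ric\ge0$) shows that $\AVR_{g_t}=:v$ is independent of $t$. It therefore suffices to show $v=0$.

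Suppose instead $v>0$. By Bishop–Gromov at each time, every ball satisfies $|B_{g_t}(y,r)|\ge v\,\omega_n r^n$. This lower bound is scale invariant, so it holds for all rescaled flows $g^i$ at time $-1$, for all radii. The main step is to pass this to the limit: we want $|B_{\fg}(x_0,A)\cap\cR_X|\ge c\,v\,\omega_nA^n$ for large $A$, which contradicts the decay just obtained.

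This transfer is the main obstacle, since $\mathbb{F}$-convergence gives only measure-theoretic and smooth-on-$\cR_X$ control. We would proceed as follows. Take the center $x_0\in\cR_X$ and points $x_i$ converging to it within the convergence scheme. Use the $H_n$-center distance bounds (Bamler) to relate $B_{g^i_{-1}}(x_i,A)$ with the set where the conjugate heat kernel has mass. Then use that the singular set has Minkowski codimension $\ge4$, so it carries negligible volume at scale $A$, and that the convergence is smooth on compact subsets of $\cR_X$. Together these give volume lower semicontinuity
\[
\liminf_i |B_{g^i_{-1}}(x_i,A)| \le |B_{\fg}(x_0,A')\cap\cR_X| + o(A^n),\qquad A'=A+C.
\]
Alternatively, we would use that with $\Ric\ge0$ and noncollapsing the time slices converge in pointed Gromov–Hausdorff sense (Cheeger–Colding), with volume convergence, to $(X,d)$ up to the identification of the $t=-1$ slice. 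Under $v>0$ this is a noncollapsed Gromov–Hausdorff limit, so Colding's volume convergence yields $|B_d(x_0,A)|\ge v\,\omega_nA^n$. Combined with the previous paragraph this is a contradiction, hence $v=0$, which is (ii).
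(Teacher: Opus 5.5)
Your architecture matches the paper's: a non-Ricci-flat tangent flow gives $R\ge\delta$ on $\cR_X$ (Theorem \ref{thmR>}), hence $|B_{\fg}(x_0,A)\cap\cR_X|\le CA^{n-2\delta}$ (Theorem \ref{thmavr}), and this is compared with $|B_{g^i_{-1}}(z_i,A)|\ge \AVR\,\omega_nA^n$ on the blow-downs.

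A side remark on the flow side. The distance-distortion argument you cite only gives $\AVR_{g_{t_2}}\le\AVR_{g_{t_1}}$ for $t_1\le t_2$, as in Lemma \ref{Lm:Monotonicity of AVR}. Constancy would need the volume form not to decay on average over large balls, which you do not show. You do not need constancy: if $\AVR_{g_{t_0}}=v>0$, then $\AVR_{g_{-\tau_i}}\ge v$ for large $i$, and that is all your contradiction uses.

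The genuine gap is the step you flag yourself:
\[
|B_{g^i_{-1}}(z_i,A)|\le |B_{\fg}(x_0,A+C)\cap\cR_X|+(\text{small}).
\]
Smooth convergence on compact subsets of $\cR$, together with the Minkowski codimension of the \emph{limit} singular set, does not give this. Those facts say nothing about how much $g^i_{-1}$-volume of $B_{g^i_{-1}}(z_i,A)$ lies outside $\psi_i(K)$ for a fixed compact $K\subset\cR_{-1}$. A priori, a definite amount of volume in the approximants could sit in high-curvature regions that collapse onto $\cS$, or at points not matched with $\cR$ at all. The paper needs two separate inputs.
\begin{enumerate}[(a)]
\item A bound on the approximants themselves, uniform in $i$: $|\{r_{\Rm}<r\}\cap B_{g^i_{-1}}(z_i,A)|\le C(A,Y)r^{2-\ep}$. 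This comes from applying Bamler's partial regularity and quantitative stratification (Propositions \ref{Prop:partial regularity} and \ref{Prop:size of strata}) and the heat-kernel volume bound for $P^*$-neighborhoods directly to the smooth flows $g^i$ (Lemma \ref{Lm: volume of the singular part}).
\item An argument that every point $x^{(i)}\in B_{g^i_{-1}}(z_i,A)$ with $r_{\Rm}\ge r$ converges to a point of $\cR_X\cap B_{\fg}(x_0,A+C)$, with an unscathed parabolic neighborhood of size comparable to $r$. One uses Theorem \ref{Thm:compactness of points} to get a limiting conjugate heat flow, picks a regular point $y'$ at time $-1-r'^2$ that is $W_1$-close to it, and uses $r_{\Rm}(x^{(i)},-1)\ge r$ to show $x^{(i)}$ lies within $Cr'$ of $\psi_i(y')$. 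This is the Claim in Lemma \ref{Lm:smooth volume convergence}. Only then does a finite cover by $r/3$-balls land inside $\psi_i(K)$ for a fixed compact $K$.
\end{enumerate}
Your sketch supplies neither the uniform small-scale volume bound nor the mechanism that forces points of controlled curvature scale into $\cR_X$.

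The Cheeger--Colding alternative does not bypass this. Colding's volume convergence gives $\mathcal{H}^n(B_Y(y,A))\ge v\,\omega_nA^n$ on the pointed Gromov--Hausdorff limit $Y$ of $(M,g^i_{-1},z_i)$. However, $\mathbb{F}$-convergence is Gromov--Wasserstein convergence of the slices, which only identifies $(X,d,\nu)$ with the support, inside $Y$, of the limit of the conjugate heat kernel measures. To bound $\mathcal{H}^n(B_Y(y,A))$ by $|B_{\fg}(x_0,A+C)\cap\cR_X|$ you must show $Y=X$, i.e.\ no part of $Y$ of positive $\mathcal{H}^n$-measure escapes that support. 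That needs either lower bounds on $\nu^i_{y_0,0;-1}$ of small balls at distance $\le A$, or the two ingredients above. As written, this identification is asserted rather than proved.
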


At last, we consider noncollapsed ancient Ricci flows with positively pinched Ricci curvature ($\Ric\ge \varepsilon Rg$). This study is inspired by a problem proposed by Hamilton (see also \cite{Ni05}) --- is it true that a complete manifold with positively pinched Ricci curvature always compact? Hamilton asked this question in dimension three, since an affirmative answer will  simplify Hamilton's classification of three-dimensional manifolds with positive Ricci curvature \cite{Ham82}. However, it was solved in dimension three in \cite{LT25} by applying Hamilton's Ricci flow, and it remains open in higher dimensional cases.

On the other hand, Hamilton \cite{Ham94} solved an extrinsic counterpart of the question above, namely, every complete hypersurface with positively pinched second fundamental form must be closed. Ni \cite{Ni25} gave an alternative proof for Hamilton's theorem, and also a relatively comprehensive survey of this problem. It is known that any Ricci shrinker with positively pinched Ricci curvature must be compact. This is nothing but a corollary of \cite[Proposition 1.1]{Ni05}. For expanding and steady cases, if, in addition, the potential function admits a critical point, then every soliton with positively pinched Ricci curvature must be Ricci flat; see \cite{DZ15}. A noncollapsed ancient solution with a smooth asymptotic soliton is clearly closed if it has positively pinched Ricci curvature. For the general case, we obtain the following result.

\begin{Theorem}\label{thm tangent flow character2}
    Let $(M^n,g_t)_{t\in(-\infty,0]}$ be a noncollapsed ancient Ricci flow with bounded curvature within each compact time interval. Suppose $$\Ric\ge\varepsilon Rg \quad \text{ on }\quad M\times(-\infty,0]$$ for some $\varepsilon>0$. Then either 
    \begin{enumerate}[(i)]
        \item every tangent flow at infinity is a Ricci-flat cone, or
        \item the ancient Ricci flow is compact.
    \end{enumerate}
\end{Theorem}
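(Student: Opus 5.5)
We prove the dichotomy by assuming that some tangent flow at infinity is \emph{not} a Ricci-flat cone and deducing that $M$ is compact.

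\textbf{Step 1: passing curvature conditions to the limit.} Fix a tangent flow at infinity $(X,d,\nu)$ with regular part $(\cR_X,\fg,f_0)$, obtained as the $\bF$-limit of the blow-downs $g^i_t=\tau_i^{-1}g_{\tau_i t}$ with $\tau_i\to\infty$. The condition $\Ric\ge\ep Rg$ is scale invariant. It also implies $R\ge 0$, hence $\Ric\ge 0$. Since the convergence is smooth on the regular part, both conditions pass to $\cR_X$:
\begin{align*}
\Ric_{\fg}\ge 0 \quad\text{and}\quad \Ric_{\fg}\ge \ep R_{\fg}\,\fg \quad\text{on}\quad \cR_X .
\end{align*}
Suppose $\Ric_{\fg}$ vanishes identically on $\cR_X$. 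Then the shrinker equation gives $\nabla^2 f_0=\frac12\fg$, and the metric soliton is a Ricci-flat cone; this is the structure theory of \cite{Bam20b,CMZ23}, which I would cite. So if (i) fails for this tangent flow, $\Ric_{\fg}\not\equiv 0$, and Theorem \ref{thmR>} gives $R\ge\delta>0$ on $\cR_X$. Pinching then upgrades this to $\Ric_{\fg}\ge\ep\delta\,\fg$ on $\cR_X$.

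\textbf{Step 2: the soliton is bounded.} I would follow Ni's argument for shrinkers, working along integral curves of $\nabla f_0$ as in the proof of Theorem \ref{thmR>}. Along such a curve, the shrinker equation gives $\nabla^2 f_0=\frac12\fg-\Ric\le(\frac12-\ep\delta)\fg$. Combined with the soliton identity $R+|\nabla f_0|^2=f_0$ (suitably normalized), this yields:
\begin{itemize}
\item $\frac{d}{ds}R=2\Ric(\nabla f_0,\nabla f_0)\ge 2\ep R|\nabla f_0|^2$, so $R$ grows at least like a positive power of $f_0$ along the flow lines.
\item Since $R\le f_0$, this forces $|\nabla f_0|^2/f_0\to$ something $<1$, contradicting the almost quadratic growth of $f_0$ from \cite{CMZ23}, unless $f_0$ is bounded.
\end{itemize}
Ni's argument gives exactly this contradiction unless the manifold is compact.

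A cleaner route, which I would try first, uses volume. Theorem \ref{thmavr} with $\delta$ replaced by the larger bound coming from the pinching, iterated, gives
\begin{align*}
|B_{\fg}(x_0,A)\cap\cR_X|\le CA^{n-2\delta'} .
\end{align*}
Iterating the improvement of $R\ge\delta$ via pinching, in the manner of Zhang's argument, should push the exponent below the volume growth lower bound of \cite{CMZ23}. That contradiction forces $X$ to be bounded. Either way, the conclusion I aim for is $\mathrm{diam}(X,d)<\infty$.

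\textbf{Step 3: back to the flow.} If $X$ is bounded, then $X$ has finite diameter $D$. By $\bF$-convergence, the rescaled time slices $(M,g^i_{-1})$ are close in the Gromov--Wasserstein sense to $X$, with the conjugate heat kernel measure concentrated on a set of diameter about $D$. I would use noncollapsedness (the Nash entropy bound) together with $\Ric\ge0$ and Bishop--Gromov to show that $M$ itself has finite diameter at time $-\tau_i$ for large $i$. If $M$ were noncompact, then, by $\Ric\ge0$, $M$ would contain a ray from the center. Points at rescaled distance $2D$ along this ray would carry, by noncollapsing and volume comparison, a definite amount of conjugate heat kernel mass after rescaling. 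That mass would lie outside the limit support, contradicting the Wasserstein closeness. Hence $M$ is compact, which is (ii).

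\textbf{Main obstacle.} I expect Step 2 to be the hardest: ruling out an unbounded non-flat metric soliton with pinched Ricci curvature, because the regular part is incomplete and geodesic arguments fail. My first attempt would be Ni's integral-curve argument along $\nabla f_0$, using that almost every flow line stays in $\cR_X$ by the codimension-four estimate.
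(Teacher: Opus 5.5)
Your Step 1 agrees with the paper. The argument breaks down at Step 3, and that is where the real content of the theorem lies.

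\textbf{Step 3.} $\bF$-convergence is a Gromov--$W_1$ convergence of metric measure spaces, so it cannot see regions whose conjugate heat kernel mass tends to zero. If $X$ has diameter $D$, closeness to $X$ only tells you that $\nu^i_{y_0,0;-1}$ gives vanishing mass to the complement of $B_{g^i_{-1}}(z_i,2D)$. That is perfectly compatible with $M$ being noncompact. Your contradiction therefore hinges on the assertion that unit balls at distance $2D$ along a ray carry a \emph{uniformly positive} amount of $\nu^i_{y_0,0;-1}$-mass. Bishop--Gromov together with noncollapsing can bound the \emph{volume} of such balls from below, but not the \emph{density} of the kernel there. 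Away from its $H_n$-center, the heat kernel estimates you can invoke are Gaussian upper bounds. A uniform far-field lower bound would need curvature control that you do not have at the rescaled scale.

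\textbf{Step 2.} This step is also only asserted. A power-type lower bound on $R$ together with $R\le f_0-W$ does not yield $|\nabla f_0|^2\le(1-c)(f_0-W)$. In the volume route, pinching gives no mechanism to improve or iterate the bound $R\ge\delta$. (Boundedness of $X$ can in fact be reached by a sharper flow-line argument: $\log R-2\varepsilon f_0$ is nondecreasing along $\nabla f_0$, and $|\nabla f_0|^2$ is integrable along backward flow lines. But this does not repair Step 3.)

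\textbf{The missing idea.} Probe the far regions of $M$ with conjugate heat kernels based \emph{there}. This transfers $R_{\fg}\ge\delta$ back to the flow pointwise, and the paper never needs $X$ to be bounded.
\begin{enumerate}[(1)]
\item For $x_i\in B_{g^i_{-1/2}}(z_i,A)$ one has $d_{W_1}(\delta_{x_i},\nu^i_{y_0,0;-1/2})\le A+C(n)$. By Theorem~\ref{Thm:compactness of points}, the flows $\nu^i_{x_i,-1/2;t}$ subconverge within $\fC$ to a conjugate heat flow $\tilde\nu_t$ on $\cX$, with densities converging smoothly on $\cR$ and $\tilde\nu_{-1}(\cS_{-1})=0$.
\item Fatou's lemma then gives $\liminf_i\int_M R_{g^i_{-1}}\,d\nu^i_{x_i,-1/2;-1}\ge\delta$.
\item Since $R$ is a supersolution of the heat equation, it dominates the heat solution with initial data $R_{g^i_{-1}}$. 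Bamler's gradient estimate (a Lipschitz bound for $\Phi^{-1}(u/n)$) then spreads the lower bound to $R\ge\delta/4$ on a ball of fixed radius around $x_i$ at time $-1/2$.
\item Covering $B_{g^i_{-1/2}}(z_i,A)$ by boundedly many such balls gives $\Ric_{g^i_{-1/2}}\ge\tfrac14\varepsilon\delta$ on the whole ball for large $i$.
\item Taking $A\gg(\varepsilon\delta)^{-1/2}$, Bonnet--Myers applied to minimizing geodesics from $z_i$ (which stay inside the ball) shows that $M$ is closed.
\end{enumerate}
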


Finally, in view of the dichotomy  in Theorem \ref{thm tangent flow character1} and Theorem \ref{thm tangent flow character2}, we propose the following  conjecture:

\begin{Conjecture}
    Let $(M,g_t)_{t\in(-\infty,0]}$ be a noncollapsed ancient Ricci flow with bounded curvature within each compact time interval. Suppose $\Ric_{g_t}\ge 0$ on $M\times (-\infty,0]$ and every tangent flow at infinity is a Ricci flat cone. Then $g_t$ is a static Ricci flat metric with positive $\AVR$.
\end{Conjecture}

This paper is organized as follows. In \S 2, we introduce some basic notions and results in the literature, while omitting some lengthy definitions in \cite{Bam23}. In \S 3, we prove Theorem \ref{thmR>}, Theorem \ref{thmavr}, and Theorem \ref{thm existence of AVR}. In \S 4, we prove Theorem  \ref{thm tangent flow character1}. In \S 5, we prove Theorem  \ref{thm tangent flow character2}.
\\

\emph{Acknowledgment.} The last author would like to thank Professor Lei Ni for some inspiring discussions about Hamilton's problem. He would also like to thank Professor Man-Chun Lee and Professor Pak-Yeung Chan for some helpful conversations.

%--------------------------------------Section2----------------------------------------------------------

\section{Preliminaries}

Bamler's theory of $\mathbb{F}$-convergence and $\mathbb{F}$-compactness \cite{Bam20b, Bam23} is our essential technique, so we feel it necessary to introduce the notions and results applied by us in this section. However, because of the excessive lengths, we have to keep our words brief and refer the readers to where they truly belong. In particular, we shall not introduce the results in \cite{Bam20a}, such as $H_n$-concentration, the gradient estimate for the heat equation, etc.

\subsection{$\mathbb{F}$-convergence and $\mathbb{F}$-compactness}

Bamler's $\mathbb{F}$-convergence is defined for sequences of metric flow pairs. A metric flow pair $(\mathcal{X},(\mu_t)_{t\in I})$ can be viewed as an evolving metric measure space, namely, for each $t\in I$, $\mathcal{X}_t$ is a complete metric space and $\mu_t$ is a probability measure on $\mathcal{X}_t$. For the exact definition of metric flow pair, see \cite[Definition 3.1, Definition 5.1]{Bam23}. The notion of $\mathbb{F}$-convergence, roughly speaking, is the Gromov-Wasserstein convergence for metric measure spaces at almost every time; the exact definition is found in \cite[Definition 5.5, Definition 5.7]{Bam23}.

Bamler also proved a compactness result for $H$-concentrated sequences of metric flow pairs: from a sequence of $H$-concentrated metric flow pairs, one can always extract a subsequence converging in the $\mathbb{F}$-sense to an $H$-concentrated metric flow pair; see \cite[Theorem 7.4]{Bam23}.

What is of our interest the most is the convergence of a sequence of metric flow pairs made of smooth Ricci flows. Let $(M^i,g^i_{t})_{(-T_i,0]}$ be a sequence of Ricci flows with bounded curvature within each compact time interval. Let $x_i\in M^i$ be a fixed base point for each $i$. Then, according to \cite{Bam20a}, $\big\{\big((M^i,g^i_t)_{t\in(-T_i,0]},(\nu^i_{x_i,0\,;\,t})_{t\in(-T_i,0]}\big)\big\}_{i=1}^\infty$ is a sequence of $H_n$-concentrated metric flow pairs, where 
\begin{align*}
    d\nu_{x,t\,;\, s}:= K(x,t\,;\,\cdot,s)\,dg_s,\quad s\le t.
\end{align*}
always stand for the conjugate heat kernel based at $(x,t)$ and 
$$H_n=\frac{(n-1)\pi^2}{2}+4.$$
By \cite[Theorem 7.4]{Bam23}, we can find a (not relabeled) subsequence, such that
\begin{align}\label{eq:limiting sequence}
    \left((M^i,g^i_t)_{t\in(-T_i,0]},(\nu^i_{x_i,0;t})_{t\in(-T_i,0]}\right)\xrightarrow[i\to\infty]{\mathbb{F},\ \mathfrak{C}, \ J}(\mathcal{X},\nu_t),
\end{align}
where $\mathfrak{C}$ stands for correspondence (see \S 2.2 and \cite[Definition 6.1]{Bam23}),  $(\mathcal{X},\nu_t)$ is an $H_n$-concentrated metric flow pair defined on $(-T,0]$, $T:=\limsup_{i\to\infty} T_i\in(0,+\infty]$, $J$ is a prescribed finite subset of $(-T,0]$ such that the convergence is time-wise on $J$, and $\nu_t$ is a conjugate heat flow satisfying 
$$\operatorname{Var}(\nu_t)\le H_n|t|.$$
Here $\operatorname{Var}$ is the variance. 

If, in addition, the sequence in \eqref{eq:limiting sequence} is noncollapsed, namely, 
\begin{align}\label{eq:noncollapsing assumption}
    \mathcal{N}_{x_i,0}(\tau)\ge -Y\quad \text{ for each }\ i,
\end{align}
for some $\tau>0$ and $Y<+\infty$, then some partial regularity results were proved by Bamler \cite{Bam20b} for the limit flow:

\begin{Theorem}[Bamler's partial regularity result {\cite[Theorem 2.4, Theorem 2.5]{Bam20b}}]
    Under the assumption \eqref{eq:noncollapsing assumption}, the limit flow pair $(\mathcal{X},(\nu_t)_{t\in(-T,0)})$ from \eqref{eq:limiting sequence} admits a decomposition $\mathcal{X}=\mathcal{R} \sqcup \mathcal{S}$   satisfying the following properties:
    \begin{enumerate} [(a)]
        \item The regular part $\mathcal{R}$ is a smooth Ricci flow space-time: $\mathcal{R}$ is locally space-time product, and on each slice $\mathcal{R}_t$, there is a metric $\fg_t$, such that $\fg_t$ satisfies the Ricci flow equation. Furthermore, $d\nu_t=u_td\fg_t$ on $\mathcal{R}_t$, where $u$ is a positive solution to the conjugate heat equation on $\mathcal{R}$;
        \item $\mathcal{S}$ is a set of measure zero for each $t$, and the space-time Minkowski codimension of $\mathcal{S}$ is no smaller than four;
        \item For each $t$, the metric completion of $(\mathcal{R}_t,\fg_t)$ is the metric space $\mathcal{X}_t$;
        \item The convergence is smooth on $\mathcal{R}$ (see \S 2.3 for more details).
    \end{enumerate}
    Here (and always) $\mathcal{X}_t$, $\mathcal{R}_t$, and $\mathcal{S}_t$ stand for time-slices.
\end{Theorem}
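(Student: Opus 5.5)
This theorem is imported from \cite{Bam20b}, so the plan is to retrace Bamler's argument rather than build a new one. It has three stages: uniform local estimates on the smooth flows, construction of the regular part as the set where these estimates give smooth subsequential convergence, and a covering argument bounding the size of the complement.

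\textbf{Step 1 (uniform estimates on the sequence).} Starting from \eqref{eq:noncollapsing assumption}, I would use the monotonicity of the Nash entropy and its spatial gradient bound from \cite{Bam20a}. These propagate the lower bound $\mathcal{N}_{x,t}(r^2)\ge -Y'$ to all base points $(x,t)$ within bounded $\nu^i$-distance of $(x_i,0)$ and all scales $r$ in a compact range. From this I would extract three consequences: the heat kernel upper and Gaussian bounds, volume noncollapsing $|B(x,t,r)|\ge c(Y)r^n$, and the $\varepsilon$-regularity theorem. The last is the key input: if $\mathcal{N}_{x,t}(r^2)\ge -\varepsilon_n$, then $|\Rm|\le r^{-2}$ on a parabolic neighbourhood $P(x,t;\varepsilon r)$. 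Together these make the curvature scale $r_{\Rm}$ a good regularity scale that is uniformly controlled along the sequence.

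\textbf{Step 2 (regular part and smooth convergence).} Using the correspondence $\mathfrak{C}$, I would define $\mathcal{R}\subset\mathcal{X}$ as the set of points arising as limits of points $(x^i,t^i)$ with $\liminf_i r_{\Rm}(x^i,t^i)>0$. Around such points the flows have uniformly bounded curvature on a parabolic ball with noncollapsed volume. Shi's estimates and Hamilton's compactness theorem then give smooth Cheeger--Gromov convergence there, which makes $\mathcal{R}$ a smooth Ricci flow space-time. The conjugate heat kernels $K(x_i,0;\cdot,\cdot)$ converge smoothly on $\mathcal{R}$ to a positive solution $u$ of the conjugate heat equation, and identifying this smooth limit with the $\mathbb{F}$-limit measure yields $d\nu_t=u_t\,d\fg_t$. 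This proves (a) and (d). For (c), I would show that $\mathcal{R}_t$ is dense in $\mathcal{X}_t$ and that the intrinsic distance of $\fg_t$ agrees with $d_t$ on $\mathcal{R}_t$. The density claim is immediate once $\nu_t(\mathcal{S}_t)=0$, since $\nu_t$ has full support. The distance identity needs an almost-geodesic avoidance argument: geodesics in the approximating flows can be perturbed to avoid the high-curvature region at small cost.

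\textbf{Step 3 (size of $\mathcal{S}$, the main obstacle).} Item (b) is the hard part. I would run a quantitative stratification argument. First, on the sequence, bound the space-time volume of $\{r_{\Rm}<r\}$ inside a parabolic ball by $C r^{4-\epsilon}$. This requires a parabolic cone-splitting principle in which almost self-similarity is detected by pinching of the Nash entropy across scales, and almost symmetries are detected by heat-kernel-based splitting maps. Splitting off a line or the time direction forces higher symmetry, while points with too much symmetry are regular by $\varepsilon$-regularity. The decisive input is that codimension-two and codimension-three singularities cannot occur. For the static, nearly $\mathbb{R}^{n-2}$-split case, I would try first to use a Cheeger--Naber type codimension-four argument adapted to the Ricci flow setting, relying on the gap $\mathcal{N}\ge -\varepsilon_n$ for almost-Euclidean models. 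Once the Minkowski estimate holds uniformly for the approximants, it passes to the limit, giving codimension at least four for $\mathcal{S}$. As a byproduct, $\nu_t(\mathcal{S}_t)=0$, which closes the density step in (c).
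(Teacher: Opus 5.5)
The paper does not prove this statement; it is quoted from \cite{Bam20b}, resting on \cite{Bam23}, so the citation is the whole argument. Judged as a reconstruction of Bamler's proof, your Steps 1--2 are broadly in line with his framework. Step 3, however, has a genuine gap exactly where the codimension-four conclusion is decided.

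The strata covering estimate (Proposition \ref{Prop:size of strata}) only controls $\tilde{S}^{\varepsilon,n-2}$. By Definition \ref{Def:quantitative strata}, a point that is almost selfsimilar, almost static and weakly $(n-3)$-split lies outside that stratum, however singular it is. So any bound on $\{r_{\Rm}<r\}$ needs the $\varepsilon$-regularity that this paper quotes as Proposition \ref{Prop:partial regularity} (Bamler's Corollary 17.2), and your proposal contains no argument for it.

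Your sentence that ``points with too much symmetry are regular by $\varepsilon$-regularity'' is false as stated. Two examples:
\begin{itemize}
\item Along the axis of the static cone $\mathbb{R}^{n-2}\times C(S^1_\beta)$ one has $\mathcal{N}\equiv\log(\beta/2\pi)$ at every scale.
\item Along $\mathbb{R}^{n-3}\times\{o\}$ in $\mathbb{R}^{n-3}\times C(\mathbb{RP}^2)$ one has $\mathcal{N}\equiv-\log 2$.
\end{itemize}
These values are bounded away from $0$, so the gap $\mathcal{N}\ge-\varepsilon_n$ says nothing about them. Yet these are precisely the codimension-two and codimension-three static models that must be excluded. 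Entropy pinching, cone splitting and heat-kernel splitting maps can at best show that the model \emph{is} such a cone. Showing that $\beta=2\pi$ and that the group is trivial is a separate ingredient, playing the role of the Cheeger--Colding--Tian and Cheeger--Naber theorems. Those proofs use two-sided Ricci bounds, which the time-slices here do not have. ``A Cheeger--Naber type argument adapted to Ricci flow'' is a placeholder for this step, not a proof of it.

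Your ordering also hides a circularity. The natural proof of that $\varepsilon$-regularity is by contradiction and compactness: pass to an $\mathbb{F}$-limit of almost static, almost split flows and use its structure. That structure is a regular part carrying smooth convergence, Ricci-flat cone tangent flows, and a small singular set, which is exactly the theorem you are trying to prove. Bamler breaks the loop with a bootstrap. He first develops the structure theory under an a priori $\varepsilon$-regularity hypothesis (Assumption 15.6 of \cite{Bam20b}) and only afterwards verifies that hypothesis (Corollary 17.2 there). Your linear plan (estimates on the approximants, then the regular part, then the size of $\mathcal{S}$) has no such induction, so Step 3 cannot be completed in the order you give.

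Smaller inaccuracies:
\begin{itemize}
\item The volume lower bound $|B(x,t,r)|\ge c\,r^n$ from \cite{Bam20a} requires $R\le r^{-2}$ on the ball.
\item The $\varepsilon$-regularity conclusion is $r_{\Rm}(x,t)\ge\varepsilon r$.
\item For (c), geodesics in the approximating flows cannot be pushed off the high-curvature set ``at small cost'' by a segment-inequality argument, since no Ricci lower bound is available. That step needs a different justification.
\end{itemize}
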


\subsection{Convergence of points in $\mathbb{F}$-convergence}

When considering the Cheeger-Gromov convergence, it is easy to compare points in different manifolds of the sequence, since such kind of convergence is defined via diffeomorphisms, which can be used to send these points to the limit manifold. In the case of the $\mathbb{F}$-convergence, we will resort to a tool of Bamler called correspondence for such purpose.

The exact definition of correspondence is found in \cite[Definition 5.4]{Bam23}. Roughly speaking, it is an evolving family of large metric spaces to which we embed all metric flow pairs under consideration and wherein we compare their closeness. This idea was first introduced by Gromov to define the Gromov-Hausdorff distance.

It is important to note that the $\mathbb{F}$-convergence is equivalent to the $\mathbb{F}$-convergence within a correspondence \cite[Theorem 6.6]{Bam23}. Let us consider the following convergence of metric flow pairs (for the sake of simplicity, we omit for now the finite set $J$ on which the convergence is time-wise, and also ignore the fact that the flows may not be fully defined on intervals):
\begin{align*}
(\mathcal{X}^i,\nu_t^i)\xrightarrow{\mathbb{F},\ \mathfrak{C}} (\mathcal{X},\nu_t),
\end{align*}
where $\mathcal{X}^i$ and $\mathcal{X}$ are defined over $(-T,0]$, and $\mathfrak{C}=\big((Z_t,d^Z_t),\{\varphi^i_t\}_{i=1}^\infty,\varphi_t\big)_{t\in(-T,0]}$ is a correspondence, such that for each $t\in(-T,0]$
\begin{align*}
    \varphi^i_t: \mathcal{X}_t^i \to (Z_t,d^Z_t)\quad i\in\mathbb{N},\qquad  \varphi_t: \mathcal{X}_t\to (Z_t,d^Z_t)
\end{align*}
are isometric embeddings.

Now there are two kinds of different convergences of points. Suppose $x_i\in\mathcal{X}_{t_i}$ and $x\in\mathcal{X}_t$, then we say $x_i$ converges to $x$ within $\mathfrak{C}$, and write
\begin{align*}
    x_i\xrightarrow[i\to\infty]{\mathfrak{C}} x,
\end{align*}
if
\begin{align*}
   t_i\to t\qquad \text{ and }\qquad  (\nu_{x_i\,;\,s}^i)_{s<t_i} \xrightarrow[i\to\infty]{\mathfrak{C}} (\nu_{x\,;\,s})_{s<t}.
\end{align*}
The convergence of conjugate heat flow within correspondence is defined in \cite[Definition 6.7]{Bam23}.
If, on the other hand, $t_i=t$ for all $i$ and 
\begin{align*}
    \varphi^i_t(x_i)\to \varphi_t(x)
\end{align*}
in the metric space $(Z_t,d_t^Z)$, then we say $x_i$ strictly converges to $x$. It is known that for sequences of points, the strict convergence indeed implies the convergence within a correspondence \cite[Theorem 6.13]{Bam23}.

The following theorem is a restatement of {\cite[Theorem 6.20]{Bam23}} in the case of \eqref{eq:limiting sequence}.

\begin{Theorem}[Compactness of sequence of points {\cite[Theorem 6.20]{Bam23}}]\label{Thm:compactness of points}
    Consider the convergence \eqref{eq:limiting sequence}. We denote by $\mathfrak{C}=\big((Z_t,d^Z_t),\{\varphi^i_t\}_{i=1}^\infty,\varphi_t\big)_{t\in(-T,0]}$ the correspondence. Let $(y_i,t_i)\in M^i\times (-T_i,0]$ be a sequence of points with $\lim_{i\to\infty}t_i=t_0\in(-T,0]$. Suppose
    \begin{align*}
        d^{g^i_{t_i}}_{W_1}(\delta_{y_i},\nu^i_{x_i,0\,;\,t_i})\le D\quad \text{ for each }i,
    \end{align*}
    where $D<+\infty$ is a positive constant. Then there is a conjugate heat flow $\mu_t$ defined for almost every $t\in(-T,t_0)$ and defined fully on $(-T,t_0)\cap J$, such that
    \begin{align*}
        \lim_{t\nearrow t_0} \Var(\mu_t)=0
    \end{align*}
    and, after passing to a subsequence
    \begin{align*}
        \nu_{y_i,t_i\,;\,t}\xrightarrow[i\to\infty]{\mathfrak{C}, J} \mu_t.
    \end{align*}
    In particular, we have
    \begin{align}\label{eq:convergence of CHF}
        \lim_{i\to\infty} d^{Z_t}_{W_1}\Big((\varphi^i_t)_*\nu_{y_i,t_i\,;\,t},(\varphi_t)_*\mu_t\Big)=0\quad \text{ for each }t\in J\cap(-T,t_0).
    \end{align}
    Here $J\subset (-T,0]$ is a finite set where the convergence \eqref{eq:limiting sequence} is time-wise; \eqref{eq:convergence of CHF} is a consequence of \cite[Definition 6.7]{Bam23}
\end{Theorem}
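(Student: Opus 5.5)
The plan is to deduce the statement directly from \cite[Theorem 6.20]{Bam23}, after checking that the setting of \eqref{eq:limiting sequence} meets its hypotheses; no new analysis should be needed.

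\emph{Step 1: the hypotheses of \cite[Theorem 6.20]{Bam23} hold.}
\begin{enumerate}[(a)]
\item Each pair $\big((M^i,g^i_t),(\nu^i_{x_i,0;t})\big)$ is an $H_n$-concentrated metric flow pair by \cite{Bam20a}. The curvature bound on compact time intervals makes the heat kernel theory available.
\item The sequence $\mathbb{F}$-converges within the correspondence $\mathfrak{C}$, time-wise on $J$, as in \eqref{eq:limiting sequence}.
\item Bamler's theorem takes conjugate heat flows $\nu_{y_i,t_i;\,\cdot}$ based at points whose times $t_i$ converge to $t_0\in(-T,0]$, with a uniform bound on their $W_1$-distance at time $t_i$ to the reference flow $\nu^i_{x_i,0;t_i}$. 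This is exactly our hypothesis with constant $D$.
\item For $t<t_i$ the $W_1$-distance between $\nu_{y_i,t_i;t}$ and $\nu^i_{x_i,0;t}$ is nondecreasing backward in time is not what we need; rather it is nonincreasing as $t$ decreases, by the monotonicity of $W_1$ between conjugate heat flows \cite{Bam20a}. So the bound $D$ persists at all earlier times. Together with $\Var(\nu_{y_i,t_i;t})\le H_n(t_i-t)$, this gives the tightness Bamler uses to extract a limit.
\end{enumerate}

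\emph{Step 2: extract the limit flow.} Applying \cite[Theorem 6.20]{Bam23} yields, after passing to a subsequence, a conjugate heat flow $(\mu_t)$ on the limit. It is defined for almost every $t<t_0$ and fully on $J\cap(-T,t_0)$, and satisfies $\nu_{y_i,t_i;\,t}\to\mu_t$ within $\mathfrak{C}$, time-wise on $J$.

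\emph{Step 3: the variance condition.} The $W_1$ convergence in $Z_t$ at times $t\in J$, or at almost every $t$, gives lower semicontinuity of the variance. Hence
\begin{align*}
\Var(\mu_t)\le\liminf_{i\to\infty}\Var(\nu_{y_i,t_i;t})\le H_n(t_0-t),
\end{align*}
so $\Var(\mu_t)\to0$ as $t\nearrow t_0$. Extending this bound from almost every $t$ to all $t<t_0$ where $\mu_t$ is defined should follow from the monotonicity of variance along conjugate heat flows on $H_n$-concentrated flows.

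\emph{Step 4: time-wise convergence on $J$.} Equation \eqref{eq:convergence of CHF} is the unpacking of \cite[Definition 6.7]{Bam23} for times in $J$, where the convergence is time-wise.

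The only place requiring care is Step 3. There one must make sure that lower semicontinuity of variance is legitimate under $W_1$ convergence inside the varying metric spaces $Z_t$. It is, since the variance is a $W_2$-type quantity, and $W_1$ convergence with uniformly bounded variance gives convergence of $\mu\otimes\mu$ against the lower semicontinuous function $d^2$.
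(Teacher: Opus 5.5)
Your proposal takes the same route as the paper, which gives no separate argument and simply records the statement as \cite[Theorem 6.20]{Bam23} specialized to the $H_n$-concentrated Ricci flow pairs in \eqref{eq:limiting sequence}; your hypothesis check, including the backward propagation of the $W_1$ bound by monotonicity, is consistent with that. Your Step 3 is a harmless re-derivation of the variance conclusion, which the paper takes as part of the cited result; note that lower semicontinuity only needs weak convergence of $\mu_i\otimes\mu_i$ together with continuity and nonnegativity of $d^2$, not any convergence of the integrals.
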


\subsection{Local smooth convergence}

We have already mentioned the fact that, if \eqref{eq:limiting sequence} and \eqref{eq:noncollapsing assumption} both hold, then the convergence \eqref{eq:limiting sequence} is locally smooth on the regular part of the limit flow. For convenience of application, we shall summarize some basic properties of local smooth convergence. The following theorem is nothing but a combination of  \cite[Theorem 9.21]{Bam23} and \cite[Theorem 2.5]{Bam20b}.

\begin{Theorem}[{\cite[Theorem 9.21]{Bam23} and \cite[Theorem 2.5]{Bam20b}}]\label{Thm_smooth_convergence}
Suppose \eqref{eq:limiting sequence} and \eqref{eq:noncollapsing assumption} both hold, then we can find an increasing sequence $U_1 \subset U_2 \subset \ldots \subset \mathcal{R}$ of open subsets with $\bigcup_{i=1}^\infty U_i = \cR$, open subsets $V_i \subset M^i\times(- T_i,0]$, time-preserving diffeomorphisms $\psi_i : U_i \to V_i$ and a sequence $\ep_i \to 0$ such that the following hold:
\begin{enumerate}[label=(\alph*)]
\item \label{Thm_smooth_convergence_a} We have
\begin{align*}
 \Vert \psi_i^* g^i - \fg \Vert_{C^{[\ep_i^{-1}]} ( U_i)} & \leq \ep_i, \\
  \Vert  u^i \circ \psi_i - u \Vert_{C^{[\ep_i^{-1}]} ( U_i)} &\leq \ep_i, 
\end{align*}
where $d\nu^i_{x_i,0\,;\,t}=u^i(\cdot,t)\,dg^i_t$, $d\nu_{t}=u(\cdot,t)\,d\mathfrak{g}_t$.
\item \label{Thm_smooth_convergence_b} Let $x \in \cR$ and $(x_i,t_i) \in M^i\times(-T_i,0]$.
Then $(x_i,t_i) \to x$ within $\fC$ if and only if $(x_i,t_i) \in V_i \subset M^i\times(-T_i,0]$ for large $i$ and $\psi_i^{-1} (x_i,t_i) \to x$ in $\cR$.
\item \label{Thm_smooth_convergence_c} If the convergence \eqref{eq:limiting sequence} is time-wise at some time $t \in (-T,0]$ for some subsequence, then for any compact subset $K \subset \cR_t$ and for the same subsequence
\[ \sup_{x \in K \cap U_i} d^Z_t (\varphi^i_t (\psi_i(x)), \varphi_t (x) )  \longrightarrow 0. \]
\item \label{Thm_smooth_convergence_d} Consider a sequence of conjugate heat flows $(\tilde\nu_t^i)_{t < t_0}$ on $M^i\times(-T_i,0]$, $i \in \bN $, for $t_0 \le0$ such that
\[ (\tilde\nu_t^i)_{ t < t_0} \xrightarrow[i \to \infty]{\quad \fC \quad} (\tilde\nu_t)_{ t < t_0}. \]
Write $d\tilde\nu^i_t = \tilde v^i_t \, dg^i_t$  for $i \in \bN$ and $d\tilde\nu_t = \tilde v_t \, d\mathfrak{g}_t$.
Then on $\cR$
\[    \tilde v^i \circ \psi_i  \xrightarrow[i \to \infty]{\quad C^\infty_{\loc} \quad} \tilde v . \]
\end{enumerate}
\end{Theorem}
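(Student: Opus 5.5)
The plan is to obtain the statement by verifying the hypotheses of Bamler's abstract smooth-convergence theorem, \cite[Theorem 9.21]{Bam23}, and then reading off its conclusions in the notation of \eqref{eq:limiting sequence}. That theorem applies to an $\mathbb{F}$-convergent sequence of metric flow pairs within a correspondence, under the assumption that the convergence is smooth on some open subset of the limit in the sense of \cite[Definition 9.12]{Bam23}. So the first task is to show that, under \eqref{eq:noncollapsing assumption}, the convergence \eqref{eq:limiting sequence} is smooth on the whole regular part $\mathcal{R}$ given by Bamler's partial regularity theorem. This is exactly the content of \cite[Theorem 2.5]{Bam20b}, and the argument reduces to checking that our noncollapsing assumption is the one used there.

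\textbf{Step 1: uniform Nash entropy bounds.} The assumption \eqref{eq:noncollapsing assumption} bounds $\mathcal{N}_{x_i,0}(\tau)$ at one fixed scale $\tau$. Bamler's regularity theory needs uniform lower bounds on $\mathcal{N}_{y,t}(r^2)$ for points $(y,t)$ in bounded regions, measured in $W_1$-distance relative to $\nu^i_{x_i,0;t}$, and for scales $r^2$ bounded above on compact time intervals.
\begin{enumerate}[(i)]
\item Scales below $\tau$: $\tau\mapsto\mathcal{N}_{x_i,0}(\tau)$ is nonincreasing, so the bound at $\tau$ transfers to all smaller scales.
\item Nearby base points and times: Bamler's oscillation estimates for the Nash entropy \cite{Bam20a} show that $\mathcal{N}$ varies in a controlled way in space and time. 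This gives $\mathcal{N}_{y,t}(r^2)\ge -Y'$ for all $(y,t)$ with $d_{W_1}(\delta_y,\nu^i_{x_i,0;t})\le D$ and $t,r^2$ in a compact range, where $Y'=Y'(Y,D,\tau,n)$.
\item Larger scales: these are handled in the same way, using that every time interval considered is compact.
\end{enumerate}
These uniform bounds are precisely the noncollapsing hypotheses of \cite[Theorem 2.5]{Bam20b}. That theorem then yields an exhaustion of $\mathcal{R}$ by open sets $U_1\subset U_2\subset\cdots$, together with time-preserving diffeomorphisms $\psi_i:U_i\to V_i\subset M^i\times(-T_i,0]$, on which $\psi_i^*g^i\to\fg$ and $u^i\circ\psi_i\to u$ in $C^\infty_{\loc}$.

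\textbf{Step 2: the uniform estimate in (a).} To get the single quantitative bound in (a), I would run a diagonal argument. Fix an exhaustion of $\mathcal{R}$ by precompact open sets $W_k\Subset W_{k+1}$. For each $k$, local smooth convergence gives an index $i_k$ beyond which the $C^{k}$-norm of the error on $W_k$ is at most $1/k$. Define $U_i:=W_k$ and $\ep_i:=1/k$ for $i_k\le i<i_{k+1}$; then (a) holds, possibly after shrinking the domains of the $\psi_i$.

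\textbf{Step 3: items (b)--(d).} With smooth convergence on $\mathcal{R}$ established, these follow from \cite[Theorem 9.21]{Bam23}.
\begin{itemize}
\item Item (b), the characterization of convergence of points within $\fC$ via $\psi_i$, is the statement of that theorem, using \cite[Theorem 6.13]{Bam23} and the convergence of conjugate heat kernels.
\item Item (c) is the statement there about the compatibility of $\psi_i$ with the embeddings $\varphi^i_t,\varphi_t$ at times where the convergence is time-wise.
\item Item (d), local smooth convergence of the densities of converging conjugate heat flows, follows from parabolic regularity on $\mathcal{R}$ together with uniqueness of the limit density.
\end{itemize}

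The main obstacle is Step 1. The difficulty is confirming that the single-scale, single-point bound \eqref{eq:noncollapsing assumption} really propagates to all points and scales that Bamler's $\epsilon$-regularity and partial regularity arguments invoke. Everything after that is bookkeeping on top of the cited theorems.
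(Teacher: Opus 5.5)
Your proposal is correct and matches the paper, which proves this statement purely by citation: smooth convergence on $\mathcal{R}$ comes from \cite[Theorem 2.5]{Bam20b}, and items (a)--(d) are read off from \cite[Theorem 9.21]{Bam23}. Your Steps 1--2 only re-derive what is already built into those results: \cite[Theorem 2.5]{Bam20b} assumes just the single-scale bound \eqref{eq:noncollapsing assumption} and already gives smooth convergence in the uniform $\ep_i$-form of (a), so the step you flag as the \emph{main obstacle} is handled inside the cited theorem.
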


We also need the following convergence of parabolic neighborhoods. Consider the Ricci flow space-time $\mathcal{R}$ and a point $x\in \mathcal{R}_t$. For $D,T^+,T^->0$, $P^o(x;D,-T^-,T^+)$ is defined to be the union of all world-lines at each $y\in B_{\fg_t}(x,D)\cap\mathcal{R}_t$ within interval $(t-T^-,t+T^+)$. If for each $s\in(t-T^-,t+T^+)$, the set $P^o(x;D,-T^-,T^+)\cap \mathcal{R}_s$ is precompact in $\mathcal{R}_s$, then $P^o(x;D,-T^-,T^+)$ is called unscathed.

\begin{Theorem}[{\cite[Theorem 9.24]{Bam23}}]
    In the setting of Theorem \ref{Thm_smooth_convergence}, suppose $(y_i,t_i)$ converges to $y_0\in\mathcal{R}_{t_0}$ within $\mathfrak{C}$, where $t_0\in(-T,0)$. Assume for $D,-T^-,T^+>0$ and $C<\infty$, we have $|\Rm_{g_t^i}|\le C$ on $P_i:=P^o(y_i,t_i; D,-T^-,T^+)$ for each $i$. Then there is some $0<T^*\le T^+$, such that $P_\infty:=P^o(y_0; D,-T^-,T^*)$ is unscathed and $|\Rm|\le C$ on $P_\infty$. Furthermore, if $T^*<T^+$ and $t_0+T^*<0$, then no point in $P_\infty$ survives until or after $t_0+T^*$.
\end{Theorem}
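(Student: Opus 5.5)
The plan is to combine the local smooth convergence of Theorem \ref{Thm_smooth_convergence} with a continuity (open–closed) argument along world-lines, using the uniform bound $|\Rm_{g^i_t}|\le C$ on $P_i$ to rule out the regular region degenerating inside $P_\infty$. Throughout, the map $\psi_i:U_i\to V_i$ is the one from Theorem \ref{Thm_smooth_convergence}.

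\textbf{Step 1: the time slice $t_0$.} By Theorem \ref{Thm_smooth_convergence}\ref{Thm_smooth_convergence_b}, $(y_i,t_i)\in V_i$ for large $i$ and $\psi_i^{-1}(y_i,t_i)\to y_0$. For a point $z\in B_{\fg_{t_0}}(y_0,D)\cap\mathcal{R}_{t_0}$, I join $y_0$ to $z$ by a curve in $\mathcal{R}_{t_0}$ of length $<D$. Since $\psi_i^*g^i\to\fg$ smoothly on compact subsets, the image of this curve under $\psi_i$ lies in $B_{g^i_{t_i}}(y_i,D)$ for large $i$. Hence every compact piece of $B_{\fg_{t_0}}(y_0,D)\cap\mathcal{R}_{t_0}$ is approximated by subsets of the $t_i$-slices of $P_i$.

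\textbf{Step 2: backward in time.} Let $S$ be the set of $s\in(t_0-T^-,t_0]$ such that the world-lines through $B_{\fg_{t_0}}(y_0,D)\cap\mathcal{R}_{t_0}$ exist in $\mathcal{R}$ on $[s,t_0]$, stay in a precompact subset of each slice, and satisfy $|\Rm|\le C$ there. Openness of $S$ is immediate, since $\mathcal{R}$ is open and the flow is smooth. For closedness, I take $s_k\searrow s_\infty$ with $s_k\in S$. The curvature bound together with the Ricci flow equation gives uniform bi-Lipschitz control $e^{\pm C'|s-s'|}$ of $\fg_s$ against $\fg_{s'}$ along world-lines, so the world-lines have uniformly controlled length and velocity. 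It remains to show that the limit point at $s_\infty$ is regular and not singular.

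\textbf{Step 3: the main obstacle.} The key difficulty is excluding the case that a world-line in the limit runs into $\mathcal{S}$ even though the approximating flows have bounded curvature. My plan is to invoke Bamler's characterisation of $\mathcal{R}$ through the curvature radius: $r_{\Rm}$ is lower semicontinuous under $\mathbb{F}$-convergence, and a point of $\mathcal{X}$ is regular precisely when $r_{\Rm}>0$ there. This uses $\epsilon$-regularity in the noncollapsed setting \eqref{eq:noncollapsing assumption}, via the Nash entropy lower bound and the heat kernel bounds of \cite{Bam20a}. Along the approximating world-lines in $P_i$, the bound $|\Rm|\le C$ on a parabolic ball of size about $C^{-1/2}$ gives $r_{\Rm}\ge c(C)>0$, up to a loss near the boundary of $P_i$ that I would handle by shrinking $D$ slightly and exhausting. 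Passing to the limit via the correspondence (Theorem \ref{Thm:compactness of points} applied to these points, which have bounded $W_1$-distance to $\nu^i_{x_i,0;t}$ by $\Var\le H_n|t|$) yields limit points with $r_{\Rm}\ge c$, hence regular, and $|\Rm|\le C$ by smooth convergence. This closes $S$, and gives $S=(t_0-T^-,t_0]$.

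\textbf{Step 4: forward in time.} Running the same argument forward, I define $T^*$ as the supremum of times up to which the world-lines survive. This gives unscathedness of $P^o(y_0;D,-T^-,T^*)$ and $|\Rm|\le C$ there. Finally, suppose $T^*<T^+$ and $t_0+T^*<0$, and some point of $P_\infty$ survived up to time $t_0+T^*$. Then the $r_{\Rm}\ge c$ bound would propagate it, together with a neighborhood of it in the slice, slightly past $t_0+T^*$. This contradicts the maximality of $T^*$, so no point of $P_\infty$ survives until or after $t_0+T^*$.
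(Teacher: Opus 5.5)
The paper gives no proof of this statement; it imports it from \cite[Theorem 9.24]{Bam23}. Your outline therefore has to stand on its own. Its open--closed structure in time is reasonable, but two steps fail as written.

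\emph{Precompactness at the full radius.} With the definition of unscathed in \S2, the slice $B_{\fg_{t_0}}(y_0,D)\cap\cR_{t_0}$ must itself be relatively compact in $\cR_{t_0}$. Your set $S$ simply assumes this at $s=t_0$, and Step 1 only gives curvature bounds on compact pieces. ``Shrinking $D$ and exhausting'' gives precompactness only for radii $D'<D$, and an increasing union of precompact sets need not be precompact.

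In fact this cannot be proved. Blow down Eguchi--Hanson statically, $g^i=\lambda_i^{-1}g_{EH}$ with $\lambda_i=i^3$; the limit is the static cone $\mathbb{R}^4/\mathbb{Z}_2$. Put $y_i$ at $g^i$-distance $D+1/i$ from the bolt. Since $|\Rm_{g_{EH}}|\lesssim(1+d)^{-6}$, every point of $B_{g^i}(y_i,D)$ satisfies $|\Rm_{g^i}|\lesssim\lambda_i(\lambda_i^{1/2}/i)^{-6}=1$ at all times. Yet the limit $y_0$ lies at distance exactly $D$ from the vertex, so $B(y_0,D)\cap\cR_{t_0}$ accumulates at a singular point.

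What an argument like yours can actually give is this: every world-line through $B(y_0,D)\cap\cR_{t_0}$ survives with $|\Rm|\le C$, and $P^o(y_0;D',-T^-,T^*)$ is unscathed for each $D'<D$. That is also all the paper uses, since it always keeps a margin (e.g.\ $\tfrac{98}{100}r$ inside $\tfrac{99}{100}r$).

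\emph{The closedness step.} You correctly call this the crux, but Step 3 does not carry it out.

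First, the property you need is $r_{\Rm}(x_\infty)\ge\limsup_i r_{\Rm}(w_i)$, which is upper semicontinuity, not lower. The lower direction follows easily from smooth convergence on $\cR$ and does nothing here. The upper direction is not among the results recorded in \S2: Theorem \ref{Thm_smooth_convergence}(b) presupposes that the limit point lies in $\cR$. It also carries essentially all the analytic content of the statement, and conversely follows from the statement applied at a nearby regular point. A proof needs the actual mechanism: smooth subconvergence of the bounded-curvature neighborhoods, with injectivity radius controlled through the entropy bound, and identification of that smooth limit with a neighborhood in $\cX$ via conjugate heat kernels.

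Second, Theorem \ref{Thm:compactness of points} only produces a conjugate heat flow $(\mu_t)_{t<s_\infty}$ with $\Var(\mu_t)\to0$. It does not give a point of $\cX_{s_\infty}$, let alone one at the end of your world-line $z(\cdot)$.
\begin{itemize}
\item Backward in time such a point can be constructed. The measures $\nu_{z(s);s_\infty}$, $s\searrow s_\infty$, live on the complete space $\cX_{s_\infty}$ and have variance at most $H_n(s-s_\infty)$. They are $W_1$-Cauchy by the monotonicity of $d_{W_1}$ together with the curvature bound near the world-line, so they converge to a Dirac mass.
\item Forward in time there is no analogue without some future-completeness of $\cX$ at $s_\infty$. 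That is what $T^*$ and the last clause are there to handle. If your argument really ran forward ``the same way'', it would give $T^*=T^+$ whenever $t_0+T^+<0$, making that clause vacuous.
\end{itemize}

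Finally, in Step 4 a single surviving world-line only carries a neighborhood of itself past $t_0+T^*$. To contradict the maximality of $T^*$ you need an open--closed argument over the path-connected set $B(y_0,D)\cap\cR_{t_0}$.
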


\subsection{Noncollapsed $\mathbb{F}$-limit metric soliton}

If, in addition to \eqref{eq:limiting sequence} and \eqref{eq:noncollapsing assumption}, we also assume the Nash entropies converge time-wise to a constant function, namely,
\begin{align}\label{eq:entropy converging to const}
    \lim_{i\to\infty}\mathcal{N}_{x_i,0}(\tau)=W\quad \text{ for all }\quad \tau \in(0,T), 
\end{align}
where $W$ is a constant, then the limit flow $(\mathcal{X},\nu_t)$ is known to be a noncollapsed $\mathbb{F}$-limit metric soliton. $W$ is called the \emph{soliton entropy}. Let us recall some basic facts about such self-similar metric flows.

\subsubsection{Basic equations and regularity}

\begin{Theorem}[{\cite[Theorem 2.18, Theorem 15.69]{Bam20b}}]\label{thmbase}
    For a noncollapsed $\mathbb{F}$-limit metric soliton $(\mathcal{X},(\nu_{t})_{t\in(-T,0)}),$ we write $\tau=-t$, $d\nu_t=(4\pi\tau)^{-\tfrac{n}{2}}e^{-f}d\fg_t$ on $\mathcal{R}.$ 
    \begin{enumerate}[(a)]
        \item  On the regular part $\mathcal{R}$, $\nabla f$ is complete, and we have
    \begin{equation}\label{eq1}
        \Ric+\nabla^2f-\tfrac{1}{2\tau}\fg_t=0,\qquad -\tau\left(|\nabla f|^2+R\right)+f\equiv W,
    \end{equation}
where $W$ is the soliton entropy.
\item There is a metric measure space $(X,d, \nu)$, called the \emph{model of the metric soliton}, such that
$$\mathcal{X}_{<0}=X\times(-T,0),$$
and the following hold for all $t\in(-T,0)$:

\begin{enumerate}[(a)]
    \item $(\mathcal{X}_t,d_t)=(X\times\{t\},|t|^{1/2}d)$.
    \item $\mathcal{R}=\mathcal{R}_X\times(-T,0)$, where $\mathcal{R}_X$, the regular part of $X$, is a smooth manifold.
    \item $(\mathcal{R}_t,\fg_t)=(\mathcal{R}_X\times\{t\},|t|\fg)$, where $\fg$ is a Riemannian metric on $\mathcal{R}_X$.
    \item $\nu_t = \nu$.
\end{enumerate}
Moreover, there is a unique family of probability measures $(\nu'_{x;t})_{x\in X,\, t\ge 0}$ such that the tuple $(X,d,\nu,(\nu'_{x;t})_{x\in X,\, t\ge 0})$ is a model for $\big(\mathcal{X},(\nu_t)_{t\in(-T,0)}\big)$ in the sense of \cite[Definition 3.57]{Bam23}.

\item The singular part of the model $(X,d,\nu)$ is a null set with codimension no less than $4$, and $(X,d)$ is the metric completion of $(\mathcal{R}_X,\fg)$.

\item Writing $f_0:=f(\cdot,-1)$, then the tuple $(\mathcal{R}_X,\fg,f_0)$, called \emph{the regular part of} $(X,d,\nu)$, satisfies the shrinker equations
\begin{gather}\label{eq: basic-properties-2}
        \Ric+\nabla^2f_0=\frac{1}{2}\fg,\quad  |\nabla f_0|^2+R=f_0- W,
\\
        \nabla R =2\Ric(\nabla f_0). \label{eq:Ricci identity and f}
    \end{gather}
\item If $(\mathcal{R}_X,\fg)$ is non-Ricci-flat, then the scalar curvature is positive everywhere on $\mathcal{R}_X$. If $(\mathcal{R}_X,\fg)$ is Ricci-flat, then $(X,d)$ is a metric cone.
\end{enumerate}

\end{Theorem}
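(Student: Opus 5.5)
Since this statement collects results of \cite{Bam20b}, the plan is to reconstruct it from the regularity theory of the previous subsections together with the entropy condition \eqref{eq:entropy converging to const}. For (a) I would use the monotonicity formula for the pointed Nash entropy along each flow of the sequence:
\[
\frac{d}{d\tau}\big(\tau\mathcal{N}_{x_i,0}(\tau)\big)=\mathcal{W}_{x_i,0}(\tau),\qquad
\frac{d}{d\tau}\mathcal{W}_{x_i,0}(\tau)=-2\tau\int\Big|\Ric+\nabla^2 f_i-\tfrac{1}{2\tau}g\Big|^2d\nu^i .
\]
By \eqref{eq:entropy converging to const}, $\mathcal{N}_{x_i,0}(\tau)\to W$ for all $\tau\in(0,T)$. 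By monotonicity and concavity in $\log\tau$, the $\mathcal{W}$-entropies then also converge to $W$. Hence the space-time integrals of $|\Ric+\nabla^2 f_i-\frac{1}{2\tau}g|^2\,d\nu^i$ over $[\tau_1,\tau_2]$ tend to zero. Using the smooth convergence on $\mathcal{R}$ (Theorem \ref{Thm_smooth_convergence}(a)), the limit $f$ satisfies the shrinker equation on $\mathcal{R}$. The identity $-\tau(|\nabla f|^2+R)+f=$ const then follows by differentiating and using the contracted Bianchi identity. The constant is identified with $W$ by integrating against $\nu_t$, which requires the integrability and decay estimates for $f$ and $|\nabla f|$ in Bamler's theory. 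Completeness of $\nabla f$ on $\mathcal{R}$ is the delicate part of (a). I would get it from the fact that the flow of $\tau\nabla f$ generates the world-lines of the self-similar structure, and that a flow line leaving every compact subset of $\mathcal{R}$ in finite time would contradict the codimension-four bound on $\mathcal{S}$ together with the measure-preserving nature of $\nu$.

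For (b) and (c), I would define $X:=\mathcal{X}_{-1}$ with $\nu:=\nu_{-1}$ and construct the identification $\mathcal{X}_t\cong(X,|t|^{1/2}d)$. Concretely, I would compose the flow of the vector field $\tau\nabla f$ on $\mathcal{R}$ with time translation, and check that it maps $(\mathcal{R}_{-1},\fg_{-1})$ homothetically onto $(\mathcal{R}_t,|t|\fg)$. This is a direct computation from \eqref{eq1}, since $\mathcal{L}_{\nabla f}\fg=2\nabla^2 f$. The map extends to metric completions by (c) of Bamler's partial regularity theorem, and $\nu_t=\nu$ follows because $u$ solves the conjugate heat equation and the map preserves $(4\pi\tau)^{-n/2}e^{-f}d\fg$. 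The statement about the conjugate heat kernels $\nu'_{x;t}$ would be obtained by transporting the kernels of the limit flow along these homotheties and invoking uniqueness of the model in \cite[Definition 3.57]{Bam23}. The singular set being null of codimension $\ge4$ is inherited from part (b) of the partial regularity theorem via the product structure.

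Part (d) is the restriction of \eqref{eq1} to $\tau=1$. Equation \eqref{eq:Ricci identity and f} follows by taking the divergence of the shrinker equation and differentiating the second identity. For (e), I would use the soliton evolution $\Delta_f R=R-2|\Ric|^2$, which follows from \eqref{eq: basic-properties-2} on the connected manifold $\mathcal{R}_X$. The strong maximum principle, applied to the lower bound $R\ge0$ that Bamler establishes for such limits, gives that either $R>0$ everywhere or $R\equiv0$. In the latter case $|\Ric|\equiv0$, so $\nabla^2 f_0=\frac12\fg$ and $|\nabla f_0|^2=f_0-W$. Then $r:=2\sqrt{f_0-W}$ is a distance-like function with $\nabla^2(r^2/2)=\fg$, which exhibits the cone structure on $\mathcal{R}_X$. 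This structure passes to the completion $(X,d)$ because $\mathcal{S}$ has codimension at least four, so almost every segment stays in $\mathcal{R}_X$.

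I expect the main obstacle to be the global issues on an incomplete $\mathcal{R}_X$. These are the completeness of $\nabla f$, the identification of the constant with $W$, and the cone conclusion in (e). In each case I would first try to exploit the codimension-four estimate to show that generic curves avoid $\mathcal{S}$.
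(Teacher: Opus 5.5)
Your derivation of the soliton identities is sound, and it is the natural route (the paper itself just quotes \cite{Bam20b} for this statement). Since $\mathcal{W}$ is nonincreasing and $\tau\mathcal{N}(\tau)=\int_0^\tau\mathcal{W}$, the convergence $\mathcal{N}_{x_i,0}(\tau)\to W$ for all $\tau$ does force $\mathcal{W}_{x_i,0}(\tau)\to W$. Fatou on compact subsets of $\mathcal{R}$ then gives the shrinker equation. Part (d), and the dichotomy ``$R>0$ or $\Ric\equiv0$'' in (e), follow as you describe.

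The genuine gap is the completeness of $\nabla f$, and it is load-bearing. Your homothety $\mathcal{R}_{-1}\to\mathcal{R}_t$ in (b) is built by flowing \emph{every} point for a prescribed time. Your cone argument in (e) needs the flow of $\nabla f_0=\tfrac r2\nabla r$ to exist for all $\sigma\in\mathbb{R}$ at every point, in order to produce a global homothety contracting $X$ to a vertex; ``almost every segment stays in $\mathcal{R}_X$'' does not substitute for this. The paper also uses completeness pointwise: for the integral curve through an arbitrary $x$ in the proof of Theorem \ref{thmR>}, and for non-extinction of world-lines in the Claim inside the proof of Lemma \ref{Lm:smooth volume convergence}.

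The mechanism you propose cannot deliver completeness. Codimension-four bounds on $\mathcal{S}$, combined with any measure-theoretic argument, show at best that $\nu$-almost every integral curve avoids $\mathcal{S}$; your own phrase is ``generic curves avoid $\mathcal{S}$''. Completeness, however, concerns every curve. For example, $\partial_{x_1}$ on $\mathbb{R}^n\setminus\{0\}$ preserves Lebesgue measure and the removed set has codimension $n\ge4$, yet the curve through $(-1,0,\dots,0)$ leaves the manifold at time $1$.

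Moreover, the flow of $\nabla f_0$ does not even preserve $\nu$: by \eqref{eq: basic-properties-2}, its weighted divergence is $\Delta f_0-|\nabla f_0|^2=\frac n2-(f_0-W)\neq0$. The only $\nu$-preserving field available is the dilation generator $Z=\tau(\nabla f-\partial_{\mathfrak t})$, which preserves $f$ and maps $(\mathcal{R}_t,\fg_t)$ homothetically onto other time slices. Completeness of $Z$ is what (b) actually needs, and it is naturally obtained from the self-similarity of $\mathcal{X}$ at the level of metric flows. But it does not by itself give completeness of $\nabla f$, because the two fields differ by $\tau\partial_{\mathfrak t}$.

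What is missing is a pointwise statement: no individual world-line of $\mathcal{R}$ (equivalently, no integral curve of $\nabla f_0$) reaches $X\setminus\mathcal{R}_X$ in finite time. This requires local control of the soliton near its singular points from Bamler's structure theory, not codimension counting. It is exactly the part of the statement that the paper imports from \cite{Bam20b} rather than proves.
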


\subsubsection{The center and the quadratic growth of the potential function}

According to \cite[\S 4.1]{CMZ23}, we can find a point $x_0\in\mathcal{R}_X=\mathcal{R}_{-1}$, such that
$$\Var(\nu_t,\delta_{x_0(t)})\le H_n|t|\quad \text{ for all } t\in(-T,0).$$
Here $x_0(t)$ is the world-line of $x_0$ in $\mathcal{R}$, which exists for all $t$ due to the self-similarity. Such $x_0$ is called a \emph{center} of the metric soliton. Henceforth, we shall always use the notation $x_0$ to represent the center.

\begin{Theorem}[{\cite[Lemma 7.12]{FL25}}]\label{thm<f<}
    Let $(X,d,\nu)$ be the model of an n-dimensional non-collapsed $\mathbb{F}$-limit metric soliton and $(\mathcal{R}_X,\fg,f_0)$ the regular part of the model. Let $x_0\in\mathcal{R}_X$ be a center of the soliton. Then, for any $\varepsilon>0,$ we have
    \begin{equation*}
        \tfrac{1}{4+\varepsilon}d^2_{\fg}(x_0,x)-C(n,\varepsilon)\le f_0(x)-W\le\tfrac{1}{4}(d_{\fg}(x_0,x)+C(n))^2,
    \end{equation*}
    where $W$ is the soliton entropy.
\end{Theorem}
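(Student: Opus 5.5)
The two inequalities will be proved separately. The upper bound will come from the soliton identity together with a volume bound. The lower bound will come from Bamler's Gaussian upper bound for the conjugate heat kernel, passed to the limit.

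\emph{Upper bound.} On $\mathcal{R}_X$ the second identity in \eqref{eq: basic-properties-2} gives $|\nabla f_0|^2=f_0-W-R$. Since $R\ge0$ on $\mathcal{R}_X$ (Theorem \ref{thmbase}(e)), $|\nabla f_0|^2\le f_0-W$, so $F:=\sqrt{f_0-W}$ satisfies $|\nabla F|\le\tfrac12$ wherever $F>0$. I will integrate this along piecewise smooth curves inside $\mathcal{R}_X$ of length arbitrarily close to $d_{\fg}(x_0,x)$; $d_{\fg}$ is the intrinsic distance of $(\mathcal{R}_X,\fg)$, so such curves exist. This gives
\[
F(x)\le F(x_0)+\tfrac12 d_{\fg}(x_0,x).
\]
It then suffices to show $F(x_0)\le C(n)/2$. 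Suppose instead $F(x_0)=a$ with $a$ large. Applying the same gradient bound in reverse gives $F\ge a-r/2$ on $B_{\fg}(x_0,r)\cap\mathcal{R}_X$.
\begin{itemize}
\item The center property $\Var(\nu,\delta_{x_0})\le H_n$ and Chebyshev give $\nu(B_{\fg}(x_0,r_0))\ge\tfrac12$ for $r_0=2\sqrt{H_n}$.
\item On that ball $d\nu=(4\pi)^{-n/2}e^{-W}e^{-F^2}d\fg$, and $F^2\ge(a-r_0/2)^2$ there.
\item I need a volume bound of the form $|B_{\fg}(x_0,r_0)\cap\mathcal{R}_X|\le C(n)e^{W}r_0^n$. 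This should follow from Bamler's upper volume bound in terms of the Nash entropy, passed to the limit via Theorem \ref{Thm_smooth_convergence}, since the soliton entropy is the limit of the Nash entropies.
\end{itemize}
The factors $e^{\pm W}$ then cancel, and we get $\tfrac12\le C(n)e^{-(a-r_0/2)^2}$. This forces $a\le C(n)$.

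\emph{Lower bound.} Fix $\varepsilon>0$. For the approximating smooth flows I will use Bamler's Gaussian upper bound for the conjugate heat kernel (from \cite{Bam20a}). At time $t=-1$, with $(z_i,-1)$ an $H_n$-center of $(x_i,0)$, it reads
\[
K(x_i,0\,;\,y,-1)\le C(n,\varepsilon)\,e^{-\mathcal{N}_{x_i,0}(1)}\exp\!\Big(-\tfrac{d^2_{g^i_{-1}}(z_i,y)}{4+\varepsilon}\Big).
\]
I then pass to the limit.
\begin{itemize}
\item By Theorem \ref{Thm_smooth_convergence}(a), $u^i\circ\psi_i\to u=(4\pi)^{-n/2}e^{-f_0}$ locally smoothly on $\mathcal{R}_X$.
\item By \eqref{eq:entropy converging to const}, $\mathcal{N}_{x_i,0}(1)\to W$.
\item The centers $z_i$ satisfy $d_{W_1}(\delta_{z_i},\nu^i_{x_i,0;-1})\le\sqrt{H_n}$. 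By Theorem \ref{Thm:compactness of points} and Theorem \ref{Thm_smooth_convergence}(c), for $y$ in a compact subset of $\mathcal{R}_X$ at a time in $J$ (which I may take to include $-1$), the distances $d_{g^i_{-1}}(z_i,\psi_i(y))$ are, up to an error $C(n)$, at least $d_{\fg}(x_0,y)$. This is because both $z_i$ and the limit center $x_0$ lie within $C(n)$ of the respective measures in $W_1$.
\end{itemize}
Combining these gives $e^{-f_0(y)}\le C(n,\varepsilon)e^{-W}\exp\!\big(-\tfrac{(d_{\fg}(x_0,y)-C(n))^2}{4+\varepsilon}\big)$. Taking logarithms and absorbing the cross term $d\cdot C(n)$ into a slightly larger $\varepsilon$ yields $f_0-W\ge \tfrac{1}{4+\varepsilon}d^2_{\fg}(x_0,\cdot)-C(n,\varepsilon)$.

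\emph{Main obstacle.} The step I expect to be hardest is this transfer of distances from the approximating flows to the limit. One needs that the $\mathfrak{C}$-embeddings nearly preserve distances to the centers, while $d_{\fg}$ on $\mathcal{R}_X$ agrees with the completed metric $d$. I would first try to handle it purely with $W_1$-triangle inequalities in $Z_{-1}$, using Theorem \ref{Thm_smooth_convergence}(c). Only if that fails would I redo the argument by re-centering at points converging strictly to $x_0$.
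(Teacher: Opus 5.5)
The paper gives no proof of this statement; it is quoted from \cite{FL25}. Your upper bound is sound. Since $R\ge 0$, the function $\sqrt{f_0-W}$ is $\tfrac12$-Lipschitz along curves in $\mathcal{R}_X$. The estimate $f_0(x_0)-W\le C(n)$ then follows from the center property together with a Euclidean volume bound $|B_{\fg}(x_0,r)\cap\mathcal{R}_X|\le C(n)e^{W}r^n$; such a bound is available (cf.\ \cite{CMZ23}), or can be obtained from Bamler's volume bound transferred to the limit as you suggest. The distance transfer you single out as the main obstacle is routine. It is the same triangle inequality in $Z_{-1}$ that the paper carries out at the end of the Claim in Lemma \ref{Lm:smooth volume convergence}.

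The real gap is in the lower bound, at its first step. The Gaussian estimate you attribute to \cite{Bam20a} does not hold there with the constant $4+\varepsilon$. Bamler's Gaussian upper bound reads
$K(x,t;y,s)\le C(\varepsilon)(t-s)^{-n/2}e^{-\mathcal{N}_{x,t}(t-s)}\exp\bigl(-\tfrac{d_s^2(z,y)}{(8+\varepsilon)(t-s)}\bigr)$.
The factor $8$ is inherited from the Gaussian concentration inequality used in its proof, and his argument does not remove it. Feeding the bound that actually exists into your limiting argument yields only
$f_0-W\ge \tfrac{1}{8+\varepsilon}d^2_{\fg}(x_0,\cdot)-C(n,\varepsilon)$.
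This weaker estimate would still suffice for Theorem \ref{thmR>}. It does not give the sharp coefficient in the statement, which is exactly what Lemma \ref{lem limlim} needs when it lets $\varepsilon\to0$ to compare $\rho=2\sqrt{f_0-W}$ with $d_{\fg}$. To close the gap you need a genuinely new input. One option is a sharp $(4+\varepsilon)$ Gaussian upper bound for the conjugate heat kernels in this setting. Another is an argument that uses the soliton structure to upgrade the $\tfrac{1}{8+\varepsilon}$ bound. This is what the cited result of \cite{FL25} provides beyond Bamler's estimates, and your proposal does not supply it.
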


\subsubsection{Volume and scalar curvature of sub-level-set}

Consider $(\mathcal{R}_X,\fg,f_0)$ as mentioned above, with $x_0\in\mathcal{R}_X$ a center. Define $\rho:=2\sqrt{f_0-W}\ge0,$ and by Theorem \ref{thm<f<} we have 
\begin{equation}\label{ineq4}
    \sqrt{\tfrac{1}{1+\ep}} d_{\fg}(x,x_0)-C(n,\ep)\le\rho(x)\le d_{\fg}(x,x_0)+C(n).
\end{equation}
Similarly to in \cite{CZ10} or \cite{Zhang11}, we define:
$$D(s):=\{x\in X,\rho(x)\le s\},\qquad V(s):=\int_{D(s)\cap\mathcal{R}_X}d\fg,\qquad\chi(s):=\int_{D(s)\cap\mathcal{R}_X}Rd\fg.$$
Then the following classical equations from \cite{CZ10} are verified in \cite{CMZ23}

\begin{Lemma}[{\cite[Lemma 8.4]{CMZ23}}]\label{lemVX}
$V(s)$, $\chi(s)$ are absolutely continuous and for almost $s>0$  we have
\begin{equation}\label{ineq5}
    \tfrac{n}{2}V(s)-\tfrac{s}{2}V'(s)=\chi(s)-\tfrac{2}{s}\chi'(s).
\end{equation}
\begin{equation}\label{ineq6}
    \chi(s)\le\tfrac{n}{2}V(s).
\end{equation}
\end{Lemma}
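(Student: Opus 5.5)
The proof follows Cao--Zhou's computation \cite{CZ10}, written in integrated form so that level sets of $\rho$ are never needed and the singular set is handled by cutoffs. Tracing the shrinker equation \eqref{eq: basic-properties-2} gives
\begin{align*}
R+\Delta f_0=\tfrac n2,\qquad |\nabla f_0|^2+R=f_0-W=\tfrac{\rho^2}{4}\quad\text{on }\cR_X.
\end{align*}
Wherever $\rho>0$ we have $\nabla\rho=\nabla f_0/\sqrt{f_0-W}$, hence $|\nabla \rho|=2|\nabla f_0|/\rho$.

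\textbf{Step 1: finiteness and absolute continuity.} By \eqref{ineq4}, $D(s)$ lies in a bounded $d$-ball about $x_0$. Since $\cS_X$ is a null set of Minkowski codimension at least four, $V(s)<\infty$ and $\chi(s)\le \sup_{D(s)}R\cdot V(s)$. Here $R\le \rho^2/4$, so $R$ is bounded on $D(s)$ and $\chi(s)<\infty$. Both $V$ and $\chi$ are nondecreasing. I would prove absolute continuity via the coarea formula for the locally Lipschitz function $\rho$ on $\cR_X$:
\begin{align*}
V(s)=\int_{\{\rho\le s\}\cap\{\nabla\rho=0\}}d\fg+\int_0^s\!\!\int_{\{\rho=\sigma\}}\frac{1}{|\nabla\rho|}\,dA\,d\sigma .
\end{align*}
The critical set $\{\nabla f_0=0\}\subset\{R=\rho^2/4\}$ needs care. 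On it, $R=f_0-W$; and if it had positive measure, then $\nabla^2 f_0=0$ a.e.\ there, so $R=n/2$ and $\rho$ would be constant there. Thus at most one level set has positive measure, and it can be handled by hand (it contributes a jump only if it has positive measure, which I would rule out using analyticity of $\fg$ and $f_0$ on the connected manifold $\cR_X$ in harmonic coordinates). Hence $V$ and $\chi$ are absolutely continuous, with, for a.e.\ $s$,
\begin{align*}
V'(s)=\int_{\{\rho=s\}}\frac{dA}{|\nabla\rho|}=\frac s2\int_{\{\rho=s\}}\frac{dA}{|\nabla f_0|},\qquad \chi'(s)=\frac s2\int_{\{\rho=s\}}\frac{R\,dA}{|\nabla f_0|}.
\end{align*}

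\textbf{Step 2: integration by parts with cutoffs (main obstacle).} The divergence theorem cannot be applied directly, since $\cR_X$ is incomplete. Because $\cS_X$ has Minkowski codimension $\ge4>2$, for each $r>0$ there are Lipschitz cutoffs $\eta_r$ on $X$ with:
\begin{itemize}
\item $\eta_r=1$ outside the $2r$-neighbourhood of $\cS_X$,
\item $\eta_r=0$ on the $r$-neighbourhood of $\cS_X$,
\item $|\nabla\eta_r|\le C/r$.
\end{itemize}
On $D(s+1)$ these satisfy $\int_{D(s+1)}|\nabla \eta_r|\le C r^{-1}\cdot r^{4-\epsilon}\to0$. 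Fix a Lipschitz test function $\phi$ on $[0,\infty)$ that is constant near $0$ (so $\phi(\rho)$ is Lipschitz) and vanishes for $\sigma\ge s_0$. Then
\begin{align*}
\int_{\cR_X}\eta_r\,\phi(\rho)\Big(\tfrac n2-R\Big)d\fg=\int\eta_r\phi(\rho)\Delta f_0
=-\int\phi(\rho)\langle\nabla\eta_r,\nabla f_0\rangle-\int \eta_r\phi'(\rho)\langle\nabla\rho,\nabla f_0\rangle .
\end{align*}
The first error term is bounded by $\sup_{D(s_0)}|\nabla f_0|\cdot\int|\nabla\eta_r|\to0$. Using $\langle\nabla\rho,\nabla f_0\rangle=\tfrac{\rho}{2}|\nabla\rho|^2=\tfrac{2}{\rho}|\nabla f_0|^2=\tfrac{2}{\rho}\big(\tfrac{\rho^2}{4}-R\big)$ and letting $r\to0$ gives
\begin{align*}
\int\phi(\rho)\Big(\tfrac n2-R\Big)=-\int\phi'(\rho)\Big(\tfrac{\rho}{2}-\tfrac{2R}{\rho}\Big)|\nabla\rho|^2\cdot\tfrac{\rho}{2}\Big/\tfrac{\rho}{2}\,|\nabla\rho|^{-2}\cdots ,
\end{align*}
or, more cleanly after the coarea formula,
\begin{align*}
\int_0^\infty \phi(\sigma)\Big(\tfrac n2V'-\chi'\Big)d\sigma=-\int_0^\infty\phi'(\sigma)\Big(\tfrac\sigma2V'(\sigma)-\tfrac2\sigma\chi'(\sigma)\Big)\,\frac{\sigma}{?}d\sigma .
\end{align*}
The precise bookkeeping of the weight in this last display is the routine part to be checked.

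\textbf{Step 3: conclusion.} Letting $\phi\to\mathbf 1_{[0,s]}$ turns the identity of Step 2 into
\begin{align*}
\tfrac n2V(s)-\chi(s)=\int_{\{\rho=s\}}|\nabla f_0|\,dA=\tfrac s2V'(s)-\tfrac2s\chi'(s)\qquad\text{for a.e. } s .
\end{align*}
The middle term is the flux of $\nabla f_0$ across the outward boundary of $D(s)$. The second equality follows from $|\nabla f_0|=\big(\tfrac{s^2}{4}-R\big)/|\nabla f_0|$ on $\{\rho=s\}$ together with the formulas for $V'$ and $\chi'$ from Step 1. This is \eqref{ineq5}. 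Since the flux term is nonnegative, the same identity gives $\chi(s)\le\tfrac n2V(s)$, which is \eqref{ineq6}. I expect Step 2 to be the main obstacle, namely verifying that the cutoff error vanishes uniformly, which uses both the codimension-four Minkowski bound and the boundedness of $|\nabla f_0|$ on $D(s_0)$. The latter follows from $|\nabla f_0|^2\le\rho^2/4$, using $R\ge0$ from Theorem~\ref{thmbase}(e).
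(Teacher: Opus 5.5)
Your plan is the right one and matches the proof the paper relies on. The paper gives no argument here; it only cites \cite{CMZ23}, where the Cao--Zhou identities \cite{CZ10} are verified on the incomplete manifold $\cR_X$. Your Step 2, with cutoffs that exploit the codimension-four singular set and the error controlled by $|\nabla f_0|\le\rho/2$, is the standard way to make that computation rigorous. Two things you wrote are not correct as they stand, however.

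\textbf{The weight in Step 2.} The weight you left as $\sigma/?$ is simply $1$. After $r\to0$ your identity reads
\[
\int\phi(\rho)\big(\tfrac n2-R\big)\,d\fg=-\int\phi'(\rho)\Big(\tfrac\rho2-\tfrac{2R}{\rho}\Big)\,d\fg .
\]
No factor $|\nabla\rho|$ appears on either side. So you only need the push-forward identities $\int\Psi(\rho)\,d\fg=\int\Psi\,dV$ and $\int\Psi(\rho)R\,d\fg=\int\Psi\,d\chi$, together with the absolute continuity from Step 1. These give
\[
\int_0^\infty\phi\,\big(\tfrac n2V'-\chi'\big)\,d\sigma=-\int_0^\infty\phi'(\sigma)\Big(\tfrac\sigma2V'(\sigma)-\tfrac2\sigma\chi'(\sigma)\Big)\,d\sigma .
\]
Take $\phi$ piecewise linear, equal to $1$ on $[0,s]$ and $0$ on $[s+1/k,\infty)$. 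This yields \eqref{ineq5} at every Lebesgue point of $\tfrac\sigma2V'-\tfrac2\sigma\chi'$. That function is the density of $\rho_*\big(\tfrac2\rho|\nabla f_0|^2\,d\fg\big)\ge0$, so you also get \eqref{ineq6} for a.e.\ $s$, hence for all $s$ by continuity.

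\textbf{The critical set in Step 1.} Analyticity only shows that $\{\nabla f_0=0\}$ is null \emph{unless} $\nabla f_0\equiv0$ on $\cR_X$. That case really occurs, for example the round shrinking sphere with $\Ric=\tfrac12\fg$. There $\rho\equiv\sqrt{2n}$, and $V$ jumps from $0$ to $|\cR_X|_{\fg}>0$ at $s=\sqrt{2n}$. So absolute continuity is genuinely false in this case, and it must be excluded from the statement. In that case \eqref{ineq5} and \eqref{ineq6} still hold trivially, and $X$ is compact, so the exception is harmless for the paper's applications.

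\textbf{Finiteness of $V(s)$ (minor).} This does not follow from the Minkowski bound on the singular set. It follows instead from $\nu(X)=1$, $d\nu=(4\pi)^{-n/2}e^{-f_0}d\fg$, and $f_0\le W+s^2/4$ on $D(s)$, which give $V(s)\le(4\pi)^{n/2}e^{W+s^2/4}$.
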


\subsubsection{Scalar curvature lower bound}

We also need a scalar curvature lower bound for noncollapsed $\mathbb{F}$-limit metric solitons which are non-Ricci-flat. This result derived in \cite{CMZ23} is analogous to the classical case \cite{CLY11}.

\begin{Theorem}\label{Thm:preliminary scalar lower bound}
    Let $(\mathcal{R}_X,\fg,f_0)$ be the regular part of a noncollapsed $\mathbb{F}$-limit metric soliton and $W$ be the  soliton entropy. Suppose $\Ric$ does not vanish identically on $\mathcal{R}$. Then there is a constant $c$ depending on the soliton and a dimensional constant $C_0$, such that
    \begin{align*}
        R\ge \frac{c}{f_0-W+C_0}\quad \text{ on }\quad \mathcal{R}_X.
    \end{align*}
\end{Theorem}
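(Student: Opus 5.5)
The plan is to prove the bound parabolically on the self-similar flow $\mathcal{X}$, using that $R$ is a supersolution of the heat equation and comparing with a conjugate heat kernel based far back in time. On $\mathcal{R}$ we have $\partial_t R=\Delta R+2|\Ric|^2\ge \Delta R$. By self-similarity $R(y,t)=|t|^{-1}R_X(y)$, where $R_X$ is the scalar curvature of $(\mathcal{R}_X,\fg)$, and $R_X\ge 0$ by Theorem \ref{thmbase}(e).

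\textbf{Step 1: reproduction inequality.} First I will show that for $x\in\mathcal{R}_X$ and $S>1$,
\begin{align*}
R_X(x)=R(x,-1)\ \ge\ \int_{X} R(\cdot,-S)\,d\nu'_{x,-1;-S}=\frac1S\int_{\mathcal{R}_X}R_X\,d\nu'_{x,-1;-S}.
\end{align*}
Here $\nu'_{x,-1;-S}$ is the conjugate heat kernel of the limit flow written in model coordinates. The approach is to differentiate $s\mapsto\int R(\cdot,s)\,d\nu_{x,-1;s}$ and integrate by parts on $\mathcal{R}_s$ with cutoff functions near $\mathcal{S}$. The boundary terms vanish because $\mathcal{S}$ has Minkowski codimension $\ge 4$, the heat kernel has Gaussian bounds, and $R$ obeys the regularity-scale bound $|\Rm|\le r_{\Rm}^{-2}$ with $r_{\Rm}^{-2}$ integrable. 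I expect this justification to be the main technical obstacle. As a backup, I would prove the inequality on the approximating smooth flows and pass to the limit via Theorem \ref{Thm_smooth_convergence}(d) and Fatou. The difficulty there is that $R$ on the approximants need not be nonnegative; the $\mathbb F$-limit still gives $R\ge 0$ in the limit, but only with a small error that must be controlled.

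\textbf{Step 2: choice of a fixed good set.} The flow is non-Ricci-flat, so $R_X>0$ on $\mathcal{R}_X$. Fix a compact set $K\subset\mathcal{R}_X$ containing a neighborhood of the center $x_0$. Then $R_X\ge c_K>0$ on $K$, and the reproduction inequality gives
\begin{align*}
R_X(x)\ \ge\ \frac{c_K}{S}\,\nu'_{x,-1;-S}(K).
\end{align*}

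\textbf{Step 3: a mass lower bound at the natural scale.} Let $d=d_{\fg}(x_0,x)$ and choose $S=d^2+C$. The Wasserstein distance at time $-S$ between $\nu_{x,-1;-S}$ and $\nu_{-S}=\nu$ is at most $d+C\sqrt{H_n}$, by the monotonicity of $d_{W_1}$ under conjugate heat flow and the center property $\Var(\nu_t,\delta_{x_0(t)})\le H_n|t|$. In model coordinates, which are rescaled by $S^{-1/2}$, this becomes $d^{X}_{W_1}(\nu'_{x,-1;-S},\nu)\le C$ uniformly in $x$. I then claim that $\nu'_{x,-1;-S}(K)\ge c>0$ uniformly, and prove it by contradiction. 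Take a sequence of such basepoints, rescale by self-similarity to basepoints at time $-1/S$, and apply Theorem \ref{Thm:compactness of points} to get a limiting conjugate heat flow $\mu$. This $\mu$ is based at a point of the time-$0$ slice within bounded $W_1$-distance of $\nu$. Its density on $\mathcal{R}$ is positive by the strong maximum principle, and Theorem \ref{Thm_smooth_convergence}(d) gives local smooth convergence of the densities on $K$, so the limit mass on $K$ is positive. This contradicts the mass tending to $0$. Enlarging $K$ if needed takes care of any loss of mass.

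\textbf{Step 4: conversion to $f_0$.} Combining Steps 2 and 3 gives $R_X(x)\ge c/(d_{\fg}(x_0,x)^2+C)$. By Theorem \ref{thm<f<} with $\varepsilon=1$, we have $d_{\fg}(x_0,x)^2\le 5(f_0-W)+C(n)$, which yields $R\ge c/(f_0-W+C_0)$ on $\mathcal{R}_X$.
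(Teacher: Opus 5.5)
\textbf{Verdict and comparison.} Your proof is correct in outline, but it is not the paper's argument, because the paper gives none: the statement is quoted from \cite{CMZ23} as the metric-soliton version of the Chow--Lu--Yang estimate \cite{CLY11}. For smooth shrinkers that estimate comes from a maximum-principle argument for the elliptic equation $\Delta_{f_0}R=R-2|\Ric|^2$ against a barrier in $f_0$. Your route is parabolic instead:
\begin{itemize}
\item you integrate the supersolution $R$ against the conjugate heat kernel from time $-S$ with $S\sim d_{\fg}(x_0,x)^2$;
\item you get a uniform mass bound on a fixed compact $K\subset\mathcal{R}_X$ from compactness of conjugate heat flows and the strong maximum principle.
\end{itemize}
The gain is that you never need a maximum principle on the incomplete manifold $\mathcal{R}_X$, where the infimum of $R$ minus a barrier could be approached along sequences running into $\mathcal{S}$. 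The $d^{-2}$ rate then comes from parabolic scaling alone. The costs are a non-explicit constant $c$ and heavier use of Bamler's structure theory. This is the same mechanism the paper uses in \S5 (Lemma \ref{Lm:local bound for scalar curvature} and the claim inside Lemma \ref{Lm:global scalar lower bound}).

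\textbf{Step 1 needs repair.} Your primary justification does not close as stated. After cutting off near $\mathcal{S}$, the error terms are $\int v|\nabla\eta||\nabla R|$ and $\int R|\nabla v||\nabla\eta|$ (or $\int Rv|\Delta\eta|$). These have size $r_{\Rm}^{-4}$ on $\{r_{\Rm}\sim r\}$. Minkowski codimension four only gives volume $\lesssim r^{4-\epsilon}$ there, so the error is $O(r^{-\epsilon})$, not $o(1)$. Integrability of $r_{\Rm}^{-2}$ is not the relevant quantity.

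Your backup route works, and the sign issue you raise is not an obstacle.
\begin{itemize}
\item On each smooth approximant, $\frac{d}{ds}\int R\,d\nu_{x,t;s}=2\int|\Ric|^2\,d\nu_{x,t;s}\ge0$ with no sign assumption on $R$.
\item To pass to the limit you only need a uniform lower bound such as $R^i\ge-\frac{n}{2(t+T_i)}$.
\item On a compact $K'\subset\mathcal{R}_{-S}$, use Theorem \ref{Thm_smooth_convergence}(b),(d). The rest contributes at least $-C\bigl(1-\nu_{x,-1;-S}(K')\bigr)$ in the limit, which vanishes as $K'$ exhausts $\mathcal{R}_{-S}$ because $\mathcal{S}_{-S}$ is null. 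This is the Fatou argument of the paper's claim in Lemma \ref{Lm:global scalar lower bound}.
\item If $T<\infty$, self-similarity lets you reach arbitrary ratios $S$ by moving the base time toward $0$.
\end{itemize}

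\textbf{Step 3 needs adapting.} Theorems \ref{Thm:compactness of points} and \ref{Thm_smooth_convergence}(d) are stated for base points in the approximating flows, not in $\mathcal{X}$. Apply them to a diagonal sequence $\psi_{i_j}(x_j,-1/S_j)$.

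Positivity of the limiting density then needs two facts:
\begin{itemize}
\item $\mu_s(\mathcal{S}_s)=0$, which follows from the reproduction formula and nullity of $\mathcal{S}$ for conjugate heat kernels;
\item connectedness of $\mathcal{R}_X$, which holds because $d$ restricts to the length metric of $\fg$ on $\mathcal{R}_X$ and is finite.
\end{itemize}
Then weak convergence gives $\liminf_j\nu'_{x_j,-1;-S_j}(U)\ge\mu_{-1}(U)>0$ for the interior $U$ of $K$. No enlargement of $K$ is needed.
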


\subsection{Size of singular set}

A singular point is distinguished from a regular point by the sizes of their curvature radii. In a  Ricci flow $(M^n,g_t)_{t\in I }$, the parabolic radius is defined as
    \begin{align*}
    r_{\Rm}(x,t):= \sup\{r>0\,:\,|\Rm|\le r^{-2}\ \text{ on }\ P(x,t;r)\},
\end{align*}
where
$$P(x,t;r):=B_{g_t}(x,r)\times \left([t-r^2,t+r^2]\cap I\right)$$
is the forward-and-backward parabolic neighborhood.

The effective strata is defined as:
\begin{Definition}[{\cite[Definition 11.1]{Bam20b}}]\label{Def:quantitative strata}
    Let $(M^n,g_t)_{t\in I }$ be a Ricci flow. Then for $\varepsilon>0$ and $0<r_1<r_2\le \infty$, define
    \begin{align*}
        \tilde{S}^{\varepsilon,0}_{r_1,r_2}\subset \tilde{S}^{\varepsilon,1}_{r_1,r_2}\subset \tilde{S}^{\varepsilon,2}_{r_1,r_2} \subset \hdots\subset \tilde{S}^{\varepsilon,n+2}_{r_1,r_2}\subset M\times I.
    \end{align*}
    $(x',t')\in \tilde{S}^{\varepsilon,k}_{r_1,r_2}$ if none of the following two holds for any $r'\in(r_1,r_2)$
    \begin{enumerate}[(1)]
        \item $(x',t')$ is $(\varepsilon,r')$-selfsimilar and weakly $(k+1,\varepsilon,r')$-split;
        \item $(x',t')$ is $(\varepsilon,r')$-selfsimilar, $(\varepsilon,r')$-static, and weakly $(k-1,\varepsilon,r')$-split.
    \end{enumerate}
\end{Definition}

The definition of selfsimilar, static, and split points shall be omitted. The point we want to make is that, for a noncollapsed Ricci flow, the complement of $\tilde{S}^{\varepsilon,n-2}_{r_1,r_2}$ consists only of regular points.

\begin{Proposition}[Bamler's partial regularity, {\cite[Corollary 17.2]{Bam20b}}]\label{Prop:partial regularity}
    For $Y<+\infty$, there is an $\varepsilon(Y)>0$ with the following property. Suppose $(M^n,g_t)_{t\in I}$ is a Ricci flow with bounded curvature within each compact interval. Assume for some $r>0$ and $(x,t)\in M\times I$ that $[t-\varepsilon^{-1}r^2,t]\subset I$ and $\mathcal{N}_{x,t}(r^2)\ge -Y$. Suppose either one of the following is true
    \begin{enumerate}[(1)]
        \item $(x,t)$ is weakly $(n-1,\varepsilon,r)$-split;
        \item $(x,t)$ is $(\varepsilon,r)$-static and weakly $(n-3,\varepsilon,r)$-split.
    \end{enumerate}
    Then $r_{\Rm}(x,t)\ge \varepsilon r$.
\end{Proposition}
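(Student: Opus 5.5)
We argue by contradiction and compactness, using the $\mathbb{F}$-compactness and the structure of noncollapsed limits recalled above. By parabolic rescaling we may take $r=1$. Suppose the statement fails for some $Y$. Then there are $\varepsilon_i\to 0$, Ricci flows $(M^i,g^i_t)_{t\in I_i}$ with $[t_i-\varepsilon_i^{-1},t_i]\subset I_i$, and points $(x_i,t_i)$ with $\mathcal{N}_{x_i,t_i}(1)\ge -Y$. Each $(x_i,t_i)$ satisfies hypothesis (1) or hypothesis (2) with $\varepsilon_i$, and yet $r_{\Rm}(x_i,t_i)<\varepsilon_i$. After passing to a subsequence, the same alternative holds for all $i$. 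We translate so that $t_i=0$.

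By \cite[Theorem 7.4]{Bam23}, a further subsequence of the pairs $\big((M^i,g^i_t),\nu^i_{x_i,0;t}\big)$ converges in the $\mathbb{F}$-sense to a limit $(\mathcal{X},\nu_t)$ on $(-\infty,0]$. The lower bound on $\mathcal{N}_{x_i,0}(1)$ makes this sequence noncollapsed in the sense of \eqref{eq:noncollapsing assumption}, so Bamler's partial regularity theorem applies. Thus $\mathcal{X}=\mathcal{R}\sqcup\mathcal{S}$, the singular set $\mathcal{S}$ has space-time Minkowski codimension at least $4$, and the convergence is smooth on $\mathcal{R}$ (Theorem \ref{Thm_smooth_convergence}).

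Next we pass the hypotheses to the limit. The $(\varepsilon_i,1)$-selfsimilarity, and the almost constancy of the Nash entropy on scales comparable to $1$ that it encodes, should force the limit to be a noncollapsed $\mathbb{F}$-limit metric soliton as in Theorem \ref{thmbase}. The weak splitting, given by almost-splitting maps $\vec y$ with small Hessian integrals against the conjugate heat kernel, should pass to honest splitting functions on $\mathcal{R}$ via the smooth convergence. In case (1) the model is then $X=\mathbb{R}^{n-1}\times X'$ with $X'$ one-dimensional. A singular set of codimension at least $4$ cannot occur in this product, so $X'$ is a smooth one-dimensional shrinker, and $X$ is the Gaussian soliton on $\mathbb{R}^n$.

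In case (2) the static condition makes the soliton Ricci-flat, hence a metric cone by Theorem \ref{thmbase}(e), split as $\mathbb{R}^{n-3}\times C(\Sigma)$ with $C(\Sigma)$ a three-dimensional cone. Its vertex line would be a singular set of codimension $3$, which is excluded. Hence $C(\Sigma)$ is smooth and Ricci-flat, therefore flat, and again $X=\mathbb{R}^n$.

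In both cases the limit is the static Euclidean flow, whose pointed Nash entropy is $0$. By the continuity of the Nash entropy under $\mathbb{F}$-convergence (the conjugate heat kernels converge within $\mathfrak{C}$, and the integrands converge on $\mathcal{R}$ with uniform tail control), we get $\mathcal{N}_{x_i,0}(\tau_0)\to 0$ for some fixed small $\tau_0>0$. Bamler's $\varepsilon$-regularity theorem \cite{Bam20a} states that $\mathcal{N}_{x,t}(r^2)\ge-\varepsilon_0$ implies $r_{\Rm}(x,t)\ge\varepsilon_0 r$. Applied with $r^2=\tau_0$, it gives a uniform positive lower bound on $r_{\Rm}(x_i,0)$, which contradicts $r_{\Rm}(x_i,0)<\varepsilon_i\to 0$.

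The main obstacle is the limiting step: showing that the weak notions (selfsimilar, static, weakly split at one scale with a small parameter) pass to genuine self-similarity, staticity and splitting of the $\mathbb{F}$-limit at the base point. This requires controlling the almost-splitting maps only in integral form against $\nu$, and then using the codimension-$4$ bound to rule out the singular lines. I would first try to follow Bamler's integral Hessian estimates, combined with the smooth convergence on $\mathcal{R}$ and the fact that $\mathcal{S}$ is null.
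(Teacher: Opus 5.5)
\textbf{Gap: the proposal is circular.} The paper does not prove this statement. It imports it from \cite[Corollary 17.2]{Bam20b}, replacing Bamler's strongly split hypothesis (Assumption 15.6 there) by the weakly split one via \cite[\S 12]{Bam20b}. Your reduction cannot stand in for that citation, because its decisive step assumes what it should prove.

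The whole content of the proposition is that certain singular models cannot occur. Examples are the static cones $\mathbb{R}^{n-2}\times C(S^1_\alpha)$ with $\alpha<2\pi$, and $\mathbb{R}^{n-3}\times C(\mathbb{RP}^2)$. These are exactly the limits a counterexample sequence for alternative (2) would produce. You exclude them only by appealing to the codimension-four bound on $\mathcal{S}$. But that bound rests on this very exclusion. In \cite{Bam20b}, a point with $r_{\Rm}<r$ is placed in the stratum $\tilde S^{\varepsilon,n-2}$ precisely by the present $\varepsilon$-regularity, and Proposition \ref{Prop:size of strata} then bounds that stratum. The paper runs the same argument in Lemma \ref{Lm: volume of the singular part}. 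So you are deducing the statement from one of its consequences. The missing idea is an independent argument, on the smooth approximating flows, that rules out codimension-two and codimension-three singular models. That is where Bamler's proof does its work.

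\textbf{Further problems, even granting the codimension-four theorem.} Two steps would still need repair.
\begin{enumerate}
\item Neither (1) nor (2) assumes $(\varepsilon,r)$-selfsimilarity; Definition \ref{Def:quantitative strata} imposes it separately. So your limit need not be a metric soliton, and Theorem \ref{thmbase}(e) (Ricci-flat implies cone) is unavailable. In case (2) you only get a static flow $\mathbb{R}^{n-3}\times N^3$ with $N^3$ flat on its regular part, not a cone. In case (1) the one-dimensional factor may be a circle rather than a Gaussian line. Both are still smooth near the base point once $\mathcal{S}$ has codimension four, so this is repairable. As written, however, your argument uses a hypothesis you do not have.
\item The step you yourself call the main obstacle is left undone. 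Integral Hessian bounds for the splitting maps against $\nu$ only give parallel gradients locally on $\mathcal{R}$. Upgrading this to an isometric splitting of the limit requires the gradient flows of the limit coordinates to avoid $\mathcal{S}$ for almost every starting point. That in turn needs a size bound on $\mathcal{S}$, so the same circularity reappears here.
\end{enumerate}
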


We remark that in Bamler's original statement (see \cite[Assumption 15.6]{Bam20b}), the condition of the last proposition is ``strongly split'' instead of ``weakly split''. However, these two notions are essentially equivalent in view of \cite[\S 12]{Bam20b}.

The sizes of singular strata can be estimated as below.

\begin{Proposition}[Size of strata, {\cite[Proposition 11.2]{Bam20b}}]\label{Prop:size of strata}
   If $Y<\infty$, $\varepsilon>0$, then the following holds. Let $(M,g_t)_{t\in I}$ be a Ricci flow with bounded curvature within each compact time interval. For $(x_0,t_0)\in M\times I$ and $r>0$ assume $[t_0-2r^2,t_0]\subset I$ and $\mathcal{N}_{x_0,t_0}(r^2)\ge -Y$. Then for any $\sigma\in(0,\varepsilon)$, there are points $(x_i,t_i)\in \tilde{S}^{\varepsilon,k}_{\sigma r,\varepsilon r}\cap P^*(x_0,t_0; r),\ i=1,2,\hdots, N$ with $N\le C(Y,\varepsilon)\sigma^{-k-\varepsilon}$ and
   \begin{align*}
       \tilde{S}^{\varepsilon,k}_{\sigma r,\varepsilon r}\cap P^*(x_0,t_0; r)\subset \bigcup_{i=1}^NP^*(x_i,t_i; \sigma r).
   \end{align*}
\end{Proposition}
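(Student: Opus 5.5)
This proposition is \cite[Proposition 11.2]{Bam20b}, and we simply invoke it. If we were to reprove it, we would follow the Cheeger--Naber quantitative stratification scheme adapted to Ricci flow. The three ingredients are entropy pinching, a cone-splitting principle, and an inductive covering over a geometric sequence of scales.

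\emph{Entropy pinching.} Fix a small $\gamma=\gamma(\varepsilon,Y)\in(0,1)$ and the scales $r_\alpha:=\gamma^\alpha r$, for $\alpha=0,1,\dots,A$ with $\gamma^A\approx\sigma/\varepsilon$. For every $(x',t')\in P^*(x_0,t_0;r)$, the lower bound $\mathcal{N}_{x_0,t_0}(r^2)\ge -Y$ propagates, via Bamler's entropy comparison, to $\mathcal{N}_{x',t'}(r_\alpha^2)\ge -Y'(Y)$. Monotonicity of $\tau\mapsto \mathcal{N}_{x',t'}(\tau)$ and $\mathcal{N}\le 0$ then bound the telescoping sum of the drops $\mathcal{N}_{x',t'}(r_{\alpha+1}^2)-\mathcal{N}_{x',t'}(r_{\alpha}^2)$ by $Y'$.

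Consequently, for each point at most $Q=Q(Y,\varepsilon,\gamma)$ scales are \emph{bad}, i.e.\ carry an entropy drop larger than $\eta(\varepsilon)$. By the entropy-pinching characterization of almost selfsimilarity (a Nash-entropy almost-rigidity statement), the point is $(\varepsilon',r_\alpha)$-selfsimilar at every other scale.

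\emph{Cone splitting.} The key lemma is as follows. Suppose that at a good scale $r_\alpha$, the points of $P^*(x',t';r_\alpha)$ that are pinched at scale $r_\alpha$ are not contained in the $\gamma r_\alpha$-neighbourhood of any ``$k$-dimensional'' set. Here ``$k$-dimensional'' means either a $k$-dimensional almost-splitting slice, or a $(k-2)$-dimensional slice times a time interval. Then every such point is either weakly $(k+1,\varepsilon,r')$-split, or $(\varepsilon,r')$-static and weakly $(k-1,\varepsilon,r')$-split, for $r'\approx r_\alpha$.

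We would prove this by contradiction and compactness. We pass to an $\mathbb{F}$-limit, which is a metric soliton because of the pinching; there, selfsimilarity with respect to several well-separated centres forces the soliton to split off lines, or, when the centres are spread in time, to be static. Transferring this back uses the smooth convergence on the regular part (Theorem~\ref{Thm_smooth_convergence}) together with heat-kernel based almost-splitting maps.

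The contrapositive is what the covering needs: points of $\tilde S^{\varepsilon,k}_{\sigma r,\varepsilon r}$ that are good at scale $r_\alpha$ lie, inside each $P^*$-ball of radius $r_\alpha$, in a $\gamma r_\alpha$-neighbourhood of a set of parabolic dimension $k$. Such a neighbourhood is covered by $C(n)\gamma^{-k}$ balls of radius $r_{\alpha+1}$. We expect this lemma to be the main obstacle, since it must handle the singular limit and the time direction (the static alternative).

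\emph{Covering.} We build the cover by induction on $\alpha$. Each ball of radius $r_\alpha$ is split into subballs of radius $r_{\alpha+1}$. At a good scale the cone-splitting lemma gives at most $C(n)\gamma^{-k}$ subballs. At a bad scale we use the crude volume/doubling bound of $C(n)\gamma^{-(n+2)}$ subballs. To keep the bad-scale count per branch at most $Q$, we sort the balls by their entropy-drop pattern, as in Cheeger--Naber; this costs a combinatorial factor $\binom{A}{Q}\le A^Q$.

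The total count is then
\[
N\le A^Q\,(C\gamma^{-(n+2)})^Q\,(C\gamma^{-k})^{A}\le C(Y,\varepsilon)\,A^Q C^A\gamma^{-kA}.
\]
Since $\gamma^{-A}\approx\varepsilon\sigma^{-1}$, we have $C^A=\sigma^{-\log C/|\log\gamma|}$. Choosing $\gamma$ small enough that $\log C/|\log\gamma|<\varepsilon/2$, and absorbing $A^Q$ into $\sigma^{-\varepsilon/2}$ up to a constant, gives $N\le C(Y,\varepsilon)\sigma^{-k-\varepsilon}$.
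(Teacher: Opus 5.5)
Your proposal matches the paper: the paper gives no proof of its own and simply quotes this result as \cite[Proposition 11.2]{Bam20b}, exactly as you do. Your extra sketch is a reasonable outline of how Bamler proves it. It uses entropy pinching to get almost selfsimilarity, a parabolic cone-splitting step with the static alternative, and the Cheeger--Naber covering across $\gamma$-adic scales. None of it is needed for the paper's purposes.
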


Recall the $P^*$-parabolic neighborhood is defined as
\begin{align*}
    P^*(x_0,t_0;A,T^+,-T^-):= &\ \{(x,t)\,:\, t\in[t_0-T^-,t_0+T^+],\ d_{W_1}^{g_{t_0-T^-}}(\nu_{x_0,t_0\,;\,t_0-T^-},\nu_{x,t\,;\,t_0-T^-})\le A\},
    \\
    P^*(x_0,t_0;r):= &\ P^*(x_0,t_0;r,r^2,-r^2).
\end{align*}

%-------------------------------Section3---------------------------------------------------------------------------------

\section{Curvature and volume estimates of metric solitons}

\subsection{A scalar curvature lower bound}

In this subsection, we prove Theorem \ref{thmR>}. Since our method is not completely the same as \cite[Proposition 1.1]{Ni05}, we summarize the idea as follows. Let us fix a large ball $B_{\fg}(x_0,D)$. By Theorem \ref{Thm:preliminary scalar lower bound}, since the soliton is non-Ricci-flat, there is a lower bound $\delta_0>0$ of $R$ on $B_{\fg}(x_0,D)\cap\mathcal{R}_X$. It is known that $\nabla f_0$ is complete, and that along the integral curve of $\nabla f_0$, the scalar curvature is always increasing due to the Ricci nonnegative assumption. Now, for any $x \in \mathcal{R}_X\setminus B_{\fg}(x_0,D)$, we consider the integral curve of $-\nabla f_0$ starting from $x$, there are two possibilities:
\begin{itemize}
    \item This integral curve intersects $ \mathcal{R}_X\setminus B_{\fg}(x_0,D)$ at some time. This then means that $R(x)$ is larger than $\delta_0$;
    \item This integral curve never intersect $ \mathcal{R}_X\setminus B_{\fg}(x_0,D)$. This means the integral curve approaches a critical point of $\nabla f_0$. But \eqref{eq: basic-properties-2} shows that a critical point in  $ \mathcal{R}_X\setminus B_{\fg}(x_0,D)$ is where the scalar curvature is large, so $R(x)$ would be even larger.
\end{itemize}

\begin{proof}[Proof of Theorem \ref{thmR>}]

Let $x_0$ be a center of the soliton and let $D>0$ be a constant to be determined. By Theorem  \ref{thm<f<} and Theorem \ref{Thm:preliminary scalar lower bound}, since the soliton is non-Ricci-flat, we can find a constant $\delta_0=\delta_0(D)>0$ depending on the soliton also, such that
\begin{align*}
    R\ge \delta_0 \quad \text{ on }\quad B_{\fg}(x_0,D)\cap \mathcal{R}_X.
\end{align*}

By \eqref{eq:Ricci identity and f}, we have
$$\langle\nabla f_0,\nabla R\rangle=2\Ric(\nabla f_0,\nabla f_0)\ge0,$$
hence along the integral curve of $\nabla f_0,$   $R$ is always increasing. Note that $\nabla f_0$ is complete by Theorem \ref{thmbase}(a). When applying Theorem \ref{thm<f<}, we shall fix $\varepsilon=1,$ so that
\begin{align}\label{eq:f growth ks}
    \tfrac{1}{5}d^2_{\fg}(x_0,x)-C(n)\le f_0(x)-W\le\tfrac{1}{4}(d_{\fg}(x_0,x)+C(n))^2.
\end{align}
Henceforth the estimation constant $C$ may vary from line to line, so long as it is a dimensional constant.

Fix an $x\in \mathcal{R}_X\backslash B_{\fg}(x_0,D).$ If $R(x)>1,$ then there is nothing to prove, since we have already found a lower bound $1$ for $R(x)$. So we assume $R(x)\le1$. Let $\sigma(s)$ be an integral curve of $\nabla f_0$ starting from $x$. Precisely, we define:
\begin{align*}
    \sigma'(s) & = \nabla f_0\quad \text{ for all }s\le 0,
    \\
    \sigma(0) & = x.
\end{align*}
Now it is clear that 
$$R(\sigma(s))\le R(\sigma(0))=R(x)\le1 \quad \text{ for all }\quad s\le 0.$$

We claim that there exists an $s_1<0,$ such that $d_{\fg}(\sigma(s_1),x_0)\le D.$ Assume by contradiction that 
$$d_{\fg}(\sigma(s),x_0)>D\quad \text{ for all }\quad s\le 0.$$ 
Then by \eqref{eq: basic-properties-2} and \eqref{eq:f growth ks}, we have 
$$|\nabla f_0|^2(\sigma(s))=f_0(\sigma(s))-W-R(\sigma(s))\ge\tfrac{1}{5}d_{\fg}^2(\sigma(s),x_0)-C>\tfrac{1}{5}D^2-C.$$
We take $D$ to be large enough, such that $\tfrac{1}{5}D^2-C>0$. On the other hand, we estimate 
$$f_0(x)-f_0(\sigma(s))=\int_s^0\tfrac{\partial}{\partial\eta}f_0(\sigma(\eta))d\eta=\int_s^0|\nabla f_0|^2(\sigma(\eta))d\eta\ge|s|(\tfrac{1}{5}D^2-C).$$
By \eqref{eq:f growth ks} again we have
\begin{align*}
    |s|(\tfrac{1}{5}D^2-C)&\le\tfrac{1}{4}(d_{\fg}(x_0,x)+C)^2-\tfrac{1}{5}d_{\fg}^2(x_0,\sigma(s))-C\\
    &\le\tfrac{1}{4}(d_{\fg}(x_0,x)+C)^2-C.
\end{align*}
Since $x$ is fixed, we obtain a contradiction when $s$ is large enough. So there is a $s_1<0$ such that $\sigma(s_1)\in B_{\fg}(x_0,D)\cap\mathcal{R}_X$. Hence
$$R(x)\ge R(\sigma(s_1))\ge\inf_{y\in B_{\fg}(x_0,D)\cap\mathcal{R}_X}R(y)\ge \delta_0.$$
So $\delta:=\min\{\delta_0,1\}$ is the desired lower bound for $R$.
\end{proof}

\subsection{A volume growth bound}

Next, we prove Theorem \ref{thmavr}. Given the validity of Lemma \ref{lemVX}, we recall that
    $$D(s):=\{x\in X,\rho(x)\le s\},\qquad V(s):=\int_{D(s)\cap\mathcal{R_X}}d\fg,\qquad \chi(s):=\int_{D(s)\cap\mathcal{R_X}}Rd\fg,$$
    where $\rho:=2\sqrt{f_0-W}\ge 0$. Define
  \begin{align*}
      P(s):= &\ \frac{V(s)}{s^n}-4\frac{\chi(s)}{s^{n+2}},
      \\
      Q(s):= &\ \frac{\chi(s)}{s^2V(s)}.
  \end{align*}
\begin{Lemma}\label{lem PQ}
    For almost every $s>0$, we have
    \begin{equation}\label{ineq8}
        \frac{V(s)}{s^n}\left(1-\frac{2n}{s^2}\right)\le P(s)\le\frac{V(s)}{s^n}.
    \end{equation}
    \begin{equation}\label{eq PP'}
        P'(s)=-s^{-1}(2s^2-4n-8)\frac{Q(s)}{1-4Q(s)}P(s),
    \end{equation}
\end{Lemma}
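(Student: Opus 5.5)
The lemma follows by direct computation from Lemma \ref{lemVX}, using only the two identities \eqref{ineq5} and \eqref{ineq6} and the absolute continuity of $V$ and $\chi$.

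\textbf{The bounds \eqref{ineq8}.} The upper bound $P(s)\le V(s)/s^n$ is immediate, because $\chi(s)\ge0$: on a non-Ricci-flat soliton $R>0$ by Theorem \ref{thmbase}(e), and if $\Ric\equiv0$ then $\chi\equiv0$. For the lower bound, \eqref{ineq6} gives $4\chi(s)/s^{n+2}\le 2nV(s)/s^{n+2}$. Substituting this into the definition of $P$ yields $P(s)\ge \frac{V(s)}{s^n}\bigl(1-\frac{2n}{s^2}\bigr)$.

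\textbf{The identity \eqref{eq PP'}.} First I would rewrite $P$ in terms of $Q$. Since $\chi=s^2QV$, we have $P=\frac{V}{s^n}(1-4Q)$. Next I differentiate $P$ directly at points $s$ where both $V$ and $\chi$ are differentiable and \eqref{ineq5} holds; this is almost every $s$. This gives
\[
P'=\frac{V'}{s^n}-\frac{nV}{s^{n+1}}-\frac{4\chi'}{s^{n+2}}+\frac{4(n+2)\chi}{s^{n+3}}.
\]
Then I eliminate $\chi'$ using \eqref{ineq5}, which states $\frac{2}{s}\chi'=\chi-\frac n2V+\frac s2V'$, i.e.
\[
\chi'=\tfrac{s}{2}\chi-\tfrac{ns}{4}V+\tfrac{s^2}{4}V'.
\]
Substituting this, the $V'$ terms cancel: $\frac{V'}{s^n}-\frac{s^2V'}{s^{n+2}}=0$. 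The $V$ terms also cancel: $-\frac{nV}{s^{n+1}}+\frac{nV}{s^{n+1}}=0$. What remains is
\[
P'=-\frac{2\chi}{s^{n+1}}+\frac{4(n+2)\chi}{s^{n+3}}=-\frac{\chi}{s^{n+3}}\bigl(2s^2-4n-8\bigr).
\]

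\textbf{Expressing $P'$ in terms of $P$ and $Q$.} Write $\chi/s^{n+3}=s^{-1}\,Q\,V/s^n$, and then use $V/s^n=P/(1-4Q)$. This gives
\[
P'=-s^{-1}(2s^2-4n-8)\frac{Q}{1-4Q}P,
\]
which is \eqref{eq PP'}.

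The only delicate point is the division by $1-4Q$. By \eqref{ineq6}, $Q\le \frac{n}{2s^2}$, so $1-4Q\ge 1-\frac{2n}{s^2}$, which is positive only when $s^2>2n$. For $s^2\le 2n$ I would instead read the identity as the equivalent statement $P'=-s^{-1}(2s^2-4n-8)\,Q\,V/s^n$, which holds for almost every $s$ with no division at all. Alternatively one can note that the lemma is only used for large $s$. Everything else is routine algebra together with the almost-everywhere differentiability supplied by Lemma \ref{lemVX}.
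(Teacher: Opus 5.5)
Your proposal is correct and is essentially the paper's proof: you differentiate $P$ directly and combine this with \eqref{ineq5} and \eqref{ineq6}, the only cosmetic difference being that you eliminate $\chi'$ where the paper eliminates $V'$. Two of your remarks are small points the paper passes over silently: the explicit justification of $\chi\ge 0$ via Theorem \ref{thmbase}(e), and the caveat that $\frac{Q}{1-4Q}P$ should be read as $Q\,V/s^n$ where $1-4Q$ might vanish (possible only for $s^2\le 2n$).
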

\begin{proof}
    By (\ref{ineq6}) we have
    \begin{equation}\label{ineq7}
        0\le Q(s)\le\frac{n}{2s^2}.
    \end{equation}
    Substituting it into the definition of $P(s)$, we have \eqref{ineq8}. 

By Lemma \ref{lemVX}, $P$ is also absolutely continuous, and almost everywhere satisfies:
    \begin{align*}
        P'(s)&=s^{-n} \left( V'(s)-nV(s)s^{-1}-4\chi'(s)s^{-2}+4(n+2)\chi(s)s^{-3}\right)\\
        &=s^{-n}\left(4s^{-2}\chi'(s)-2s^{-1}\chi(s)-4s^{-2}\chi'(s)+4(n+2)s^{-3}\chi(s)\right)\\
        &=-s^{-n-3}\chi(s)(2s^2-4n-8)\\
        &=-s^{-1}(2s^2-4n-8)\tfrac{Q(s)}{1-4Q(s)}P(s).
    \end{align*}
    And we get \eqref{eq PP'}.
\end{proof}
Now we prove Theorem \ref{thmavr}, and the proof is not different from \cite{Zhang11}.
\begin{proof}[Proof of Theorem \ref{thmavr}]
    Set $\zeta(s):=s^{-1}(2s^2-4n-8)\tfrac{Q(s)}{1-4Q(s)}$. If $s\ge s_0,$ where $s_0$ is some dimensional constant, then by (\ref{ineq8}) and (\ref{ineq7}) we obtain $\zeta(s)>0$ and $P(s)\ge\tfrac{V(s)}{2s^n}\ge0$, and thus $P'(s)=-\zeta(s)P(s)\le0.$
    Integrating from $s_0$ to $s$,
    \begin{align*}
        P(s)&=P(s_0)\exp\left(-\int_{s_0}^s\zeta(t)dt\right)\\
        &\le P(s_0)\exp\left(-\int_{s_0}^st^{-1}(2t^2-4n-8)\tfrac{\delta}{t^2}dt\right)\\
        &=P(s_0)\exp\left(\delta(2n+4)(s_0^{-2}-s^{-2})\right)s_0^{2\delta}s^{-2\delta}\\
        &\le P(s_0)\exp\left(\delta(2n+4)s_0^{-2}\right)s_0^{2\delta}s^{-2\delta}\\
        &= P(s_0)C(n,\delta)s^{-2\delta},
    \end{align*}
    where the inequality in the second line above is due to $$Q(t)=\frac{\chi(t)}{t^2V(t)}=t^{-2}\fint_{\{\rho\le t\}\cap\mathcal{R}_X}Rd\fg\ge\frac{\delta}{t^2}.$$ Then by (\ref{ineq4}) we have $B_{\fg}(x_0,\tfrac{3}{2}(s+c_n))\cap\cR_X\subset\{\rho\le s\}.$ Note that $P(s)\ge\tfrac{V(s)}{2s^n}$ for $s\ge s_0$, and hence
    $$\left|B_{\fg}(x_0,\tfrac{3}{2}(s+c_n))\cap\cR_X \right|\le V(s)\le2P(s)s^n\le2P(s_0)C(n,\delta)s^{n-2\delta},$$
    which means that for all $A$ large enough,
    $$|B_{\fg}(x_0,A)\cap\cR_X|\le C(n,\delta)P(s_0)s_0^{2\delta}\left(\tfrac{2}{3}\right)^{n-2\delta}A^{n-2\delta}\le CA^{n-2\delta}.$$The proof is done.
\end{proof}

\subsection{AVR of metric solitons}

To prove Theorem \ref{thm existence of AVR}, we need the following lemma.
\begin{Lemma}\label{lem limlim}
   Let $(X,d,\nu)$ be (the model of) a noncollapsed $\mathbb{F}$-limit metric soliton, $(\mathcal{R}_X,\fg,f_0)$ be its regular part, and $x_0$ be a center of the metric soliton.  Then the limit 
    $\lim_{A\to+\infty}\frac{|B_{\fg}(x_0,A)\cap\cR_{X}|}{ A^n}$ exists if and only if the limit $\lim_{s\to+\infty}\frac{V(s)}{s^n}$ exists, in which case 
    \begin{equation*}
        \lim_{A\to+\infty}\frac{|B_{\fg}(x_0,A)\cap\cR_{X}|}{ A^n}=\lim_{s\to+\infty}\frac{V(s)}{s^n}.
    \end{equation*}
\end{Lemma}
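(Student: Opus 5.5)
The plan is to sandwich the distance balls between sublevel sets of $\rho=2\sqrt{f_0-W}$, using \eqref{ineq4}, and then let the comparison constants tend to $1$. Fix $\ep>0$. By \eqref{ineq4}, on $\mathcal{R}_X$ we have
\begin{align*}
\sqrt{\tfrac{1}{1+\ep}}\,d_{\fg}(x,x_0)-C(n,\ep)\le \rho(x)\le d_{\fg}(x,x_0)+C(n).
\end{align*}
This gives two inclusions for every large $s$:
\begin{align*}
B_{\fg}\big(x_0,s-C(n)\big)\cap\cR_X\subset D(s)\cap\cR_X\subset B_{\fg}\big(x_0,\sqrt{1+\ep}\,(s+C(n,\ep))\big)\cap\cR_X .
\end{align*}
The first holds because $\rho\le d+C(n)\le s$ on the small ball. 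The second holds because $\rho\le s$ forces $d\le\sqrt{1+\ep}(s+C(n,\ep))$; a non-strict inequality here can be handled by slightly enlarging the radius. Both inclusions are only used inside $\cR_X$, so the measure $d\fg$ makes sense throughout.

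Write $\beta(A):=|B_{\fg}(x_0,A)\cap\cR_X|$. Taking volumes in the inclusions gives
\begin{align*}
\beta\big(s-C(n)\big)\le V(s)\le \beta\big(\sqrt{1+\ep}(s+C(n,\ep))\big).
\end{align*}
Equivalently, for large $A$,
\begin{align*}
V\Big(\tfrac{A}{\sqrt{1+\ep}}-C(n,\ep)\Big)\le \beta(A)\le V\big(A+C(n)\big).
\end{align*}

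Now suppose $L=\lim_{s\to\infty}V(s)/s^n$ exists. Dividing the last display by $A^n$ and letting $A\to\infty$ gives
\begin{align*}
(1+\ep)^{-n/2}L\le\liminf_{A\to\infty}\beta(A)/A^n\le\limsup_{A\to\infty}\beta(A)/A^n\le L.
\end{align*}
Here we use that additive constants disappear, since $(A+c)^n/A^n\to1$. Since $\ep$ is arbitrary, $\beta(A)/A^n\to L$.

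Conversely, suppose $\lim\beta(A)/A^n=L'$ exists. The same argument applied to the display with $V(s)$ in the middle gives
\begin{align*}
L'\le\liminf_{s\to\infty} V(s)/s^n\le\limsup_{s\to\infty} V(s)/s^n\le(1+\ep)^{n/2}L'.
\end{align*}
Letting $\ep\to0$ shows $V(s)/s^n\to L'$. The case of an infinite limit is treated the same way, though $V(s)\le Cs^n$ by earlier volume bounds anyway.

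There is no real obstacle here. The only care needed is that the constant in the lower bound of \eqref{ineq4} depends on $\ep$, which is why we send $s\to\infty$ before $\ep\to0$. We also need the inclusions to hold only on $\cR_X$, where both $\rho$ and $d_{\fg}$ are defined and \eqref{ineq4} applies.
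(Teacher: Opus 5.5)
Your proof is correct and follows the paper's argument. Like the paper, you sandwich $V(s)$ between $|B_{\fg}(x_0,s-C)\cap\cR_X|$ and $|B_{\fg}(x_0,\sqrt{1+\ep}(s+C(\ep)))\cap\cR_X|$ using \eqref{ineq4}, compare $\liminf$ and $\limsup$ so that the additive constants wash out, and send $\ep\to0$ after $s\to\infty$. Your factors $(1+\ep)^{\pm n/2}$ are placed correctly, whereas the paper's second chain writes $(1+\ep)^{-n/2}$ where $(1+\ep)^{n/2}$ is meant; this is harmless once $\ep\to0$.
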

\begin{proof}
    By the definition of $V(s)$ and \eqref{ineq4}, for any $\varepsilon>0$ we have,
    \begin{equation*}
        |B_{\fg}(x_0,s-C)\cap\cR_X|\le V(s)\le \left|B_{\fg} \left( x_0,\sqrt{1+\ep}(s+C(\ep))\right) \cap\cR_X\right|\quad \text{ for all }\ s>0,
    \end{equation*}
    where we omit the dependence on $n$ in the constants above. Thus, it is clear that
    \begin{align*}
        \liminf_{s\to+\infty}\frac{|B_{\fg}(x_0,s-C)\cap\cR_X|}{s^n} &\ \le \liminf_{s\to+\infty}\frac{V(s)}{s^n}\le \limsup_{s\to+\infty}\frac{V(s)}{s^n}
        \\
        &\ \le\limsup_{s\to\infty}\frac{\left|B_{\fg} \left( x_0,\sqrt{1+\ep}(s+C(\ep))\right) \cap\cR_X\right|}{s^n}.
    \end{align*}
    This immediately implies that
    \begin{align*}
        \liminf_{s\to+\infty}\frac{|B_{\fg}(x_0,s)\cap\cR_{X}|}{s^n} &\ \le \liminf_{s\to+\infty}\frac{V(s)}{s^n}\le \limsup_{s\to+\infty}\frac{V(s)}{s^n} 
        \\
        &\ \le (1+\varepsilon)^{\frac{n}{2}}\limsup_{s\to+\infty}\frac{|B_{\fg}(x_0,s)\cap\cR_{X}|}{s^n}.
    \end{align*}
    In like manner, we also have
    \begin{align*}
        \liminf_{s\to+\infty}\frac{V(s)}{s^n}\le &\  (1+\varepsilon)^{-n/2} \liminf_{s\to+\infty}\frac{|B_{\fg}(x_0,s)\cap\cR_{X}|}{s^n}
        \\
        \le &\ \limsup_{s\to+\infty}\frac{|B_{\fg}(x_0,s)\cap\cR_{X}|}{s^n}
        \le  \limsup_{s\to+\infty}\frac{V(s)}{s^n}.
    \end{align*}
Since $\varepsilon>0$ is arbitrary, taking $\varepsilon\to0$, the proof is completed.
\end{proof}

Theorem \ref{thm existence of AVR} now follows from the corollary below.

\begin{Corollary}\label{coro PAVR}
    For a metric soliton described above, the $\AVR=\lim_{A\to+\infty}\frac{|B_{\fg}(x_0,A)\cap\cR_{X}|}{\omega_n A^n}$ exists and 
    $$\lim_{s\to+\infty}P(s) =\omega_n\AVR \in [0,C(n)e^{W}],$$ 
    where $W\le 0$ is the soliton entropy. In particular, the $\AVR$ is bounded from above by a constant depending only on $n$.
\end{Corollary}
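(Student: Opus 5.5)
Since $\lim_{s\to\infty} P(s)$ is the quantity to study, I will first show it exists, then identify it with $\omega_n\AVR$ via Lemma \ref{lem limlim}, and finally bound it by $C(n)e^{W}$.

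\textbf{Existence.} By Lemma \ref{lem PQ}, for $s\ge s_0$ (with $s_0$ the dimensional constant used in the proof of Theorem \ref{thmavr}) we have $P(s)\ge \tfrac{V(s)}{2s^n}\ge 0$. Moreover, $P'(s)=-\zeta(s)P(s)$ with $\zeta(s)\ge0$, because $Q\ge0$ and $1-4Q\ge 1-2n/s^2>0$ by \eqref{ineq7}. Since $P$ is absolutely continuous, it is nonincreasing and nonnegative on $[s_0,\infty)$, so $L:=\lim_{s\to\infty}P(s)$ exists in $[0,\infty)$. By \eqref{ineq8},
\begin{align*}
\frac{V(s)}{s^n}\Big(1-\frac{2n}{s^2}\Big)\le P(s)\le \frac{V(s)}{s^n},
\end{align*}
and $1-2n/s^2\to1$, so $\lim_{s\to\infty} V(s)/s^n$ exists and equals $L$. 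Lemma \ref{lem limlim} then shows that $\lim_{A\to\infty}|B_{\fg}(x_0,A)\cap\cR_X|/A^n$ exists and equals $L$. Hence $\AVR$ exists and $L=\omega_n\AVR$.

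\textbf{Upper bound.} The main work is to show $L\le C(n)e^{W}$. I would use the normalization of $\nu$: $\nu$ is a probability measure and $d\nu=(4\pi)^{-n/2}e^{-f_0}d\fg$ on $\cR_X$, where $\cR_X$ has full measure. On $D(s)\cap\cR_X$ we have $f_0=W+\rho^2/4\le W+s^2/4$, so
\begin{align*}
1\ge \nu(D(s))\ge (4\pi)^{-n/2}e^{-W-s^2/4}V(s),
\end{align*}
which gives $V(s)\le (4\pi)^{n/2}e^{W}e^{s^2/4}$. This is too weak, because of the Gaussian factor. I would instead use the co-area structure: writing $\nu(D(s))=\int_0^s (4\pi)^{-n/2}e^{-W-r^2/4}\,dV(r)$ and integrating by parts,
\begin{align*}
1\ge (4\pi)^{-n/2}e^{-W}\int_0^\infty \tfrac{r}{2}e^{-r^2/4}V(r)\,dr.
\end{align*}
Since $P$ is nonincreasing for $s\ge s_0$, we have $V(r)\ge P(r)r^n\ge L r^n$ for $r\ge s_0$. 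Therefore
\begin{align*}
1\ge (4\pi)^{-n/2}e^{-W}L\int_{s_0}^\infty \tfrac{r^{n+1}}{2}e^{-r^2/4}\,dr=c(n)e^{-W}L,
\end{align*}
which gives $L\le C(n)e^{W}$. The fact $W\le0$ is known from \cite{Bam20b} (Nash entropy is nonpositive), and it makes the final bound purely dimensional.

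\textbf{Main obstacle.} The delicate step is justifying the integration by parts and discarding the boundary term $e^{-s^2/4}V(s)\to0$. This follows from the polynomial bound $V(s)\le 2P(s_0)s^n$ obtained from monotonicity. Absolute continuity of $V$ comes from Lemma \ref{lemVX}.
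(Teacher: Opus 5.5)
Your proof is correct. The existence part matches the paper: $P$ is nonincreasing on $[s_0,\infty)$, then \eqref{ineq8} and Lemma \ref{lem limlim} identify $\lim P(s)$ with $\omega_n\AVR$.

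The upper bound is where you take a different route. The paper does not argue it at all; it imports it in one line from \cite[Corollary 1.3]{CMZ23}. You derive it inside the present framework:
\begin{itemize}
\item The normalization $\nu(\cR_X)=1$ and $e^{-f_0}=e^{-W-\rho^2/4}$ give $\int_0^\infty \tfrac{r}{2}e^{-r^2/4}V(r)\,dr\le (4\pi)^{n/2}e^{W}$.
\item Monotonicity of $P$ from the \emph{dimensional} radius $s_0$ onward turns the asymptotic quantity $L$ into the lower bound $V(r)\ge Lr^n$ for all $r\ge s_0$, not just for $r$ large.
\item That is exactly what makes the remaining Gaussian integral $\int_{s_0}^\infty \tfrac{r^{n+1}}{2}e^{-r^2/4}\,dr$ a purely dimensional constant.
\end{itemize}
What your route buys is self-containedness: it uses only Lemma \ref{lemVX}, Lemma \ref{lem PQ}, the probability normalization of $\nu$, and $W\le 0$. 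What it gives up is generality: it controls only the limit $L$, not volumes at finite scales, though that is all the corollary needs. The paper's route is shorter but rests on a result proved by separate means in \cite{CMZ23}.

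A minor correction: the step you flag as the main obstacle is not one. The boundary term $e^{-s^2/4}V(s)$ is nonnegative, so it can simply be dropped. Equivalently, write $e^{-\rho^2/4}=\int_\rho^\infty \tfrac{u}{2}e^{-u^2/4}\,du$ and apply Tonelli to get directly $\int_{\cR_X}e^{-\rho^2/4}\,d\fg=\int_0^\infty \tfrac{u}{2}e^{-u^2/4}V(u)\,du$. Neither absolute continuity of $V$ nor decay of the boundary term is needed for this step.
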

\begin{proof}
    As we have seen in the proof of Theorem \ref{thmavr}, $P'(s)\le0$ for all $s\ge s_0$ large enough, which means $\lim_{s\to+\infty}P(s)$ exists. Then by \eqref{ineq8} and Lemma \ref{lem limlim}, 
    $$\lim_{s\to+\infty}P(s)=\lim_{s\to+\infty}\tfrac{V(s)}{s^n} =\omega_n\AVR.$$
    The upper bound of the $\AVR$ is a direct consequence of \cite[Corollary 1.3]{CMZ23}.
\end{proof}

 Now we show a characterization of when the $\AVR$ is positive; see also \cite{CLY12}.
\begin{Corollary}
    For a metric soliton described above we have that
    $$\AVR>0\; \text{if and only if there exists}\;\; s_0>0,\;\text{such that} \int_{s_0}^{+\infty}\tfrac{\chi(t)}{tV(t)}dt<+\infty$$
\end{Corollary}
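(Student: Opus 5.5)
The plan is to use the explicit ODE for $P$ from Lemma \ref{lem PQ}. It gives the exact representation
\begin{align*}
P(s)=P(s_0)\exp\Big(-\int_{s_0}^s\zeta(t)\,dt\Big),\qquad \zeta(t)=t^{-1}(2t^2-4n-8)\frac{Q(t)}{1-4Q(t)},
\end{align*}
valid for $s\ge s_0$, with $s_0$ the dimensional constant from the proof of Theorem \ref{thmavr}. By Corollary \ref{coro PAVR}, $\omega_n\AVR=\lim_{s\to\infty}P(s)$. We also know $P(s_0)\ge V(s_0)/(2s_0^n)>0$, at least once $s_0$ is large enough that $D(s_0)\cap\mathcal{R}_X$ has positive measure. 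It follows that $\AVR>0$ if and only if $\int_{s_0}^\infty\zeta(t)\,dt<\infty$. The one thing to be careful about is the justification of the exponential formula. Since $P$ is absolutely continuous and positive on $[s_0,\infty)$, $\log P$ is absolutely continuous there with derivative $-\zeta$ a.e., so the formula holds.

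The remaining step is to compare $\zeta(t)$ with $\chi(t)/(tV(t))=tQ(t)$. By \eqref{ineq7}, $0\le Q(t)\le n/(2t^2)$. Hence for $t\ge s_1$, with $s_1$ a large dimensional constant, we have $1\le (1-4Q)^{-1}\le 2$ and $t^2\le 2t^2-4n-8\le 2t^2$. This gives
\begin{align*}
tQ(t)\le\zeta(t)\le 4tQ(t)=4\frac{\chi(t)}{tV(t)}\quad\text{for } t\ge s_1 .
\end{align*}
So $\int^\infty\zeta<\infty$ if and only if $\int^\infty \chi/(tV)<\infty$.

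Changing the lower limit of integration is harmless. For $s_0'>0$ we have $V(t)>0$ for all $t\ge s_0'$, since $D(t)$ contains a neighborhood of a regular point. Also $\chi/(tV)=tQ\le n/(2t)$, which is bounded on compact subintervals of $(0,\infty)$. Finiteness of $\int_{s_0'}^\infty$ for some $s_0'$ is therefore equivalent to finiteness of $\int_{s_1}^\infty$.

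With this, both directions follow. If $\AVR>0$, then $\int_{s_1}^\infty\zeta<\infty$, so $\int_{s_1}^\infty\chi/(tV)<\infty$. Conversely, if the integral is finite from some $s_0'$, then $\int_{\max(s_0',s_1)}^\infty\zeta<\infty$, so $\lim P>0$ and $\AVR>0$.

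I expect no real obstacle. The only delicate points are that $P(s_0)>0$, and that the quantities involved are a.e.-defined because $V$ and $\chi$ are only absolutely continuous.
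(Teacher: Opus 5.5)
Your proposal is correct and follows essentially the paper's own proof: the exponential representation of $P$ on $[s_0,\infty)$, a two-sided comparison of $\zeta$ with $tQ=\chi/(tV)$ via \eqref{ineq7}, and then Corollary \ref{coro PAVR}. Your constants are $1$ and $4$ where the paper has $\tfrac32$ and $2$, which makes no difference.

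One harmless slip: $V(t)>0$ can fail for small $t$. On a non-Ricci-flat soliton, $\rho=2\sqrt{|\nabla f_0|^2+R}$ is bounded below by a positive constant (e.g.\ $\rho\equiv\sqrt{2n}$ on the round sphere), so $V$ vanishes near $0$. Your final argument in both directions only uses tails of the integral, so you can simply drop the ``changing the lower limit'' paragraph.
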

\begin{proof}
    As in the proof of Theorem \ref{thmavr}, for $s\ge s_0$, where $s_0$ is a dimensional constant,  we have
    \begin{equation}\label{eq PsPs_0}
        P(s)=P(s_0)\exp\left(-\int_{s_0}^st^{-1}(2t^2-4n-8)\tfrac{Q(t)}{1-4Q(t)}dt\right).
    \end{equation}
    By \eqref{ineq7}, for all $s\ge s_0$,
    $$\tfrac{3}{2}\int_{s_0}^stQ(t)dt\le \int_{s_0}^s t^{-1}(2t^2-4n-8)\tfrac{Q(t)}{1-4Q(t)}dt \le 2\int_{s_0}^stQ(t)dt.$$
    So \eqref{eq PsPs_0} becomes 
    $$P(s_0)e^{-2\int_{s_0}^s \tfrac{\chi(t)}{tV(t)}dt}\le P(s)\le P(s_0)e^{-\frac{3}{2}\int_{s_0}^s \tfrac{\chi(t)}{tV(t)}dt}.$$
    Then by Corollary \ref{coro PAVR} we have finished the proof. 
\end{proof}

%---------------------------------------Section 4 ---------------------------------------

\section{Ancient Ricci flow with nonnegative Ricci curvature}

In this section and the next, we shall consider noncollapsed ancient solutions. Since one side of the dichotomies in Theorem \ref{thm tangent flow character1} and Theorem \ref{thm tangent flow character2} is that every tangent flow at infinity is a Ricci flat cone, we need only to focus on the case where \emph{one} tangent flow at infinity is non-Ricci-flat in view of Theorem \ref{thmbase}(e). Let us recall our setting.

Let $(M^n,g_t)_{t\in(-\infty,0]}$ be an ancient Ricci flow with bounded curvature within each compact time interval. Furthermore, suppose for some $y_0\in M$ and $Y<+\infty$, we have
\begin{align}\label{eq:noncollapsing of ancient solution}
    \mathcal{N}_{y_0,0}(\tau)\ge -Y\quad \text{ for all }\quad \tau>0.
\end{align}
Let $\lambda_i\nearrow+\infty$ be a sequence of scales, and define
\begin{align*}
    g^i_t:=\lambda_i^{-1}g_{\lambda_it}.
\end{align*}
Then, there is a metric soliton $(\mathcal{X},(\nu_t)_{t\in(-\infty,0)})$, called a \emph{tangent flow at infinity}, and a correspondence $\mathfrak{C}$, such that, after passing to a subsequence,
\begin{align}\label{def-limiting-condition}
    \left((M,g^i_t)_{t\in(-\infty,0]},(\nu^i_{y_0,0;t})_{t\in(-\infty,0]}\right)\xrightarrow[i\to\infty]{\mathbb{F},\mathfrak{C}}\big(\mathcal{X},(\nu_t)_{t\in(-\infty,0)}\big).
\end{align}

Since the limit flow is a metric soliton, we remark the following important fact.

\begin{Lemma}\label{Lm:timewise}
    After passing to a further subsequence, the convergence \eqref{def-limiting-condition} is time-wise at any $t\in(-\infty,0)$.
\end{Lemma}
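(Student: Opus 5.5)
The convergence is time-wise at every $t\in(-\infty,0)$ essentially because of self-similarity. In Bamler's theory \cite[Theorem 7.4, Definition 5.7]{Bam23}, the $\mathbb{F}$-convergence \eqref{def-limiting-condition} is time-wise at every $t$ outside a set $E$ of measure zero. We can also arrange time-wise convergence on any prescribed finite set $J$, and hence on any countable set by a diagonal argument. The plan is to first secure time-wise convergence on a countable dense set of times, and then use the soliton structure of the limit (Theorem \ref{thmbase}) to extend it to every time.

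\textbf{Step 1.} Fix a countable dense subset $\{t_k\}\subset(-\infty,0)$. By \cite[Theorem 7.4]{Bam23}, for each finite $J_m=\{t_1,\dots,t_m\}$ we may pass to a subsequence along which the convergence is time-wise on $J_m$. A diagonal subsequence then converges time-wise at every $t_k$. More precisely, the metric measure spaces $(M,d_{g^i_{t_k}},\nu^i_{y_0,0;t_k})$ converge in the Gromov--Wasserstein sense, within $\mathfrak{C}$, to $(\mathcal{X}_{t_k},d_{t_k},\nu_{t_k})=(X,|t_k|^{1/2}d,\nu)$.

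\textbf{Step 2.} Extend this to an arbitrary $t\in(-\infty,0)$. Time-wise convergence at $t$ means that $d_{W_1}^{Z_t}\bigl((\varphi^i_t)_*\nu^i_{t},(\varphi_t)_*\nu_t\bigr)\to0$ within the correspondence (\cite[Definition 5.7]{Bam23}). Choose $t_k\nearrow t$ from the dense set with $t_k<t$. I would use the monotonicity of $W_1$-distance between conjugate heat flows along the flows: $d_{W_1}$ is nondecreasing as time increases in the backward direction of the conjugate heat flow \cite[Lemma 2.7]{Bam20a}. The same holds for the limit metric flow, where it is built into the definition of $H$-concentrated metric flows and of $\mathbb{F}$-distance \cite[\S 5]{Bam23}. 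Bamler's $\mathbb{F}$-distance is defined as an infimum over time-sets $E$ of measure $\le r$, and the "exceptional" set $E$ of non-time-wise convergence can be taken to be contained in the set of discontinuity times of the limit flow. More precisely, \cite[Theorem 7.6 / Corollary 6.??]{Bam23} states that if $\mathcal{X}$ is continuous at $t$ (i.e., $t\mapsto (\mathcal{X}_t,\nu_t)$ is continuous in the Gromov--Wasserstein sense), then the convergence is automatically time-wise at $t$. So the key step is to verify that the limit flow is continuous at every $t<0$. This is immediate from Theorem \ref{thmbase}(b), since $(\mathcal{X}_t,d_t,\nu_t)=(X,|t|^{1/2}d,\nu)$ depends continuously on $t$: the identity map is a $(1+o(1))$-bi-Lipschitz measure-preserving map, so the GW distance tends to $0$. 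The continuity must also hold in the stronger sense involving the conjugate heat kernels $\nu'_{x;t}$ of the model, which likewise scale self-similarly.

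\textbf{Main obstacle.} The hardest part will be pinning down precisely which statement of \cite{Bam23} guarantees time-wise convergence at continuity times of the limit (I believe it is \cite[Theorem 7.6]{Bam23}: the convergence is time-wise at all times where the limit is continuous, possibly after passing to a subsequence). The other task is checking that the self-similar model indeed has no discontinuity times. If that citation is unavailable, I would instead argue directly, in this order. Use the monotonicity of $W_1$ under the flows on both sides. Use the uniform variance bound $\Var\le H_n|t|$ to control the $W_1$-distance between $\nu^i_{t_k}$ transported forward and $\nu^i_t$. Then use the continuity of the limit at $t$ to sandwich the discrepancy at time $t$ between its values at nearby dense times $t_k$.
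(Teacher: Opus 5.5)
Your proposal follows the paper's proof: self-similarity makes the limit continuous at every $t<0$, and \cite[Theorem 7.6]{Bam23} then gives time-wise convergence at every continuity time after passing to a subsequence, so your Step 1 and your fallback argument are unnecessary. The paper checks continuity through \cite[Definition 4.7, Theorem 4.9]{Bam23}, using only that $t\mapsto\iint d_t\,d\nu_t\,d\nu_t=|t|^{1/2}\iint d\,d\nu\,d\nu$ is continuous, so no separate argument for the kernels $\nu'_{x;t}$ is needed; it also notes that an everywhere-continuous limit is the unique future-continuous representative to which Theorem 7.6 applies, a point your proposal leaves implicit.
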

\begin{proof}
In view of the self-similarity of the metric soliton, the function
\begin{align*}
    t\mapsto \iint_{\mathcal{X}_t\times \mathcal{X}_t} d_t(x,y)\,d\nu_t(x)d\nu_t(y)
\end{align*}
varies only by scaling, and hence is continuous. Then by \cite[Definition 4.7, Theorem 4.9]{Bam23}, the metric soliton  $\big(\mathcal{X},(\nu_t)_{t\in(-\infty,0)}\big)$ is continuous at any $t\in(-\infty,0)$. Since the limit $\big(\mathcal{X},(\nu_t)_{t\in(-\infty,0)}\big)$ is always continuous, it must be the unique representative of the limit in the statement of \cite[Theorem 7.6]{Bam23}, for any other equivalent representative shall not be future continuous. Thus, by \cite[Theorem 7.6]{Bam23}, after passing to a subsequence, \eqref{def-limiting-condition} is time-wise at any continuous time of $\big(\mathcal{X},(\nu_t)_{t\in(-\infty,0)}\big)$, namely, at any $t\in(-\infty,0)$.
    
\end{proof}

Note that the soliton entropy of $(\mathcal{X},(\nu_t)_{t\in(-\infty,0)})$ is equal to
\begin{align*}
    W:=\lim_{\tau\to\infty}\mathcal{N}_{y_0,0}(\tau).
\end{align*}

Theorem \ref{thm tangent flow character1} is  reduced to the following proposition.

\begin{Proposition}\label{Prop: zero avr}
    Let  $(M^n,g_t)_{t\in(-\infty,0]}$ be an ancient solution as described above, with bounded curvature within each compact time interval, and satisfying the noncollapsing condition \eqref{eq:noncollapsing of ancient solution}. Assume furthermore that $\Ric\ge 0$ on $M\times(-\infty,0]$. If a tangent flow at infinity obtained in \eqref{def-limiting-condition} has non-Ricci-flat regular part, then
    \begin{align*}
        \AVR_{g_t}\equiv 0\quad \text{ for all }\quad t\le 0.
    \end{align*}
\end{Proposition}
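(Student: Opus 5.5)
We argue by contradiction and transfer volume information from the flow to the tangent flow at infinity. First recall that with $\Ric\ge 0$ at every time, the Bishop–Gromov ratio $|B_{g_t}(x,r)|/(\omega_n r^n)$ is nonincreasing in $r$, so $\AVR_{g_t}$ exists and is independent of $x$. Moreover, $\Ric\ge0$ gives $\partial_t dg_t=-R\,dg_t\le 0$ and distances shrink forward in time. Comparing balls at two times, using the curvature bound on compact intervals (which gives $d_{g_s}\le d_{g_t}+C$ type bounds via the Hamilton–Perelman distance distortion estimate, or simply $d$ nonincreasing), one sees that $\AVR_{g_t}$ is independent of $t$. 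Hence it suffices to assume $\AVR_{g_t}\equiv v>0$ for all $t$ and derive a contradiction. By Bishop–Gromov we then have $|B_{g_t}(x,r)|\ge v\,\omega_n r^n$ for all $x$, $r>0$, $t\le0$, and this bound is scale invariant. It therefore passes to every $g^i_t=\lambda_i^{-1}g_{\lambda_i t}$.

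Next, pass the lower bound to the limit. Work at $t=-1$, where the convergence is time-wise by Lemma \ref{Lm:timewise}. Take $y_i\in M$ such that $(y_i,-1)$ converges within $\mathfrak C$ to the world-line point of the center $x_0\in\mathcal R_X$. Such points exist by Theorem \ref{Thm_smooth_convergence}(b), namely $y_i=\psi_i(x_0,-1)$. Fix $A$ large. The ball $B_{g^i_{-1}}(y_i,A)$ has volume at least $v\omega_nA^n$, and we want
\[
|B_{\fg}(x_0,A)\cap\mathcal R_X|\ge v\,\omega_n A^n-o_A(A^n).
\]
The main obstacle is that smooth convergence only holds on compact subsets of $\mathcal R$: volume could escape into the singular set or be lost near it. To handle this, I would use Bamler's size of strata estimate, Proposition \ref{Prop:size of strata}, together with Proposition \ref{Prop:partial regularity}. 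These show that the set of points of $B_{g^i_{-1}}(y_i,A)$ with $r_{\Rm}<\sigma$ is covered by $C\sigma^{-(n-2)-\varepsilon}$ parabolic neighborhoods of scale $\sigma$. Each such neighborhood has volume $\le C\sigma^n$ (using the Ricci lower bound, or the general volume upper bound), so the total volume of these points is $\le C(A)\sigma^{2-\varepsilon}$. On the set where $r_{\Rm}\ge\sigma$, curvature is bounded on parabolic neighborhoods, and Theorem 9.24 of \cite{Bam23} together with Theorem \ref{Thm_smooth_convergence} shows these regions lie in $\psi_i(U_i)$ in the limit. We must also check that distances are comparable, i.e.\ that $d_{\fg}\le \liminf d_{g^i}$ on regular regions, so that the images land in $B_{\fg}(x_0,A+o(1))$. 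Letting $i\to\infty$ and then $\sigma\to0$ gives the displayed lower bound, up to a constant error that is harmless after dividing by $A^n$. Consequently the $\AVR$ of the metric soliton, defined via Theorem \ref{thm existence of AVR}, is at least $v>0$.

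Finally, derive the contradiction. The tangent flow has non-Ricci-flat regular part. Since $\Ric\ge0$ passes to $\mathcal R_X$ by smooth convergence, Theorem \ref{thmR>} gives $R\ge\delta>0$ on $\mathcal R_X$. Theorem \ref{thmavr} then yields $|B_{\fg}(x_0,A)\cap\mathcal R_X|\le CA^{n-2\delta}$, so the soliton $\AVR$ is $0$, contradicting the lower bound $v>0$. (Alternatively one can use the corollary characterizing $\AVR>0$ with $\chi/V\ge\delta$.) Hence $\AVR_{g_t}=0$ for some, and therefore all, $t\le0$.
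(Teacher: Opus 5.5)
Your proposal follows the paper's proof in all essentials. It uses monotonicity of the $\AVR$ under $\Ric\ge 0$ to pass to the blow-down times, and splits the ball at time $-1$ according to $r_{\Rm}$. The singular part is controlled by Propositions \ref{Prop:partial regularity} and \ref{Prop:size of strata}, the regular part is pushed into $B_{\fg}(x_0,A+C)\cap\cR_X$ by smooth convergence, and Theorems \ref{thmR>} and \ref{thmavr} are applied on the limit. Arguing by contradiction instead of via the paper's direct upper bound is only a change of framing.

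Two minor caveats. First, your argument only justifies Lemma \ref{Lm:Monotonicity of AVR} ($\AVR_{g_{t_1}}\ge\AVR_{g_{t_2}}$ for $t_1\le t_2$), not constancy in $t$; that is harmless, since that is the only direction you use. Second, \cite[Theorem 9.24]{Bam23} presupposes that the limit point is already regular. So the step showing that points with $r_{\Rm}\ge\sigma$ converge into a fixed compact subset of $\cR_{-1}$ needs the argument of the Claim in Lemma \ref{Lm:smooth volume convergence}: compactness of points, a regular point $y'$ at time $-1-r'^2$ close to the limiting conjugate heat flow, Bamler's concentration of $\nu_{x,-1\,;\,t'}$ near $x$ when $r_{\Rm}(x,-1)$ is large, and non-extinction of world-lines in the soliton.
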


We briefly summarize the ideas of the proof of Proposition \ref{Prop: zero avr}. First of all, we recall the fact that the $\AVR$ is non-increasing for the ancient flow in question, so it remains to show that
\begin{align*}
    \AVR_{g_{-\lambda_i}}\to0\quad \text{ or }\quad \AVR_{g^i_{-1}}\to 0.
\end{align*}
On the other hand, by Theorem \ref{thmavr}, the smooth part of the limit has zero $\AVR$. So it is sufficient to show that the volume of a large ball in $(M,g^i_{-1})$ converges in some sense to the volume of a large ball on the regular part of the limit.

\subsection{Monotonicity of the AVR}

The following lemma is a simple consequence of Perelman's distance distortion estimate.

\begin{Lemma}\label{Lm:Monotonicity of AVR}
Let $(M,g_t)_{t\in (-\infty,0])}$ be the ancient solution in Proposition \ref{Prop: zero avr}.  Then we have
\begin{align*}
    \AVR_{g_{t_1}} \ge \AVR_{g_{t_2}}\quad \text{ for } \ \ t_1\le t_2\le0.
\end{align*}
\end{Lemma}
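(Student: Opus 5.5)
The plan is to compare metric balls at the two times using Perelman's distance distortion estimate together with $\Ric\ge 0$. First, fix a base point $x\in M$. Since $\Ric\ge0$, the Ricci flow equation $\partial_t g=-2\Ric$ shows that $g_t$ is pointwise non-increasing in $t$. Hence for $t_1\le t_2$ the volume form satisfies $dg_{t_2}\le dg_{t_1}$, and distances satisfy $d_{t_2}\le d_{t_1}$.

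The second fact alone goes the wrong way for comparing balls: it gives $B_{g_{t_1}}(x,A)\subset B_{g_{t_2}}(x,A)$. We therefore need the reverse inclusion up to a small error, and this is where Perelman's distance distortion estimate enters. On $[t_1,t_2]$ the curvature is bounded, say $|\Rm|\le K$, by the standing assumption of bounded curvature on compact time intervals. In that case Perelman's lemma \cite[Lemma 8.3(b)]{Per02}, in the form $\partial_t d_t(x,y)\ge -C(n)\sqrt{K}$, or even the crude bound $\partial_t d_t\ge -C(n)Kd_t$, applies. We would use the linear-in-$K$, distance-independent version: for $t\in[t_1,t_2]$,
\begin{align*}
    d_{t}(x,y)\ge d_{t_1}(x,y)-C(n)\sqrt{K}\,(t-t_1).
\end{align*}
Setting $C_1:=C(n)\sqrt{K}(t_2-t_1)$, this gives $d_{t_2}(x,y)\ge d_{t_1}(x,y)-C_1$ for all $x,y$. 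Consequently
\begin{align*}
    B_{g_{t_2}}(x,A)\subset B_{g_{t_1}}(x,A+C_1).
\end{align*}
The constant $C_1$ is independent of $A$ because Perelman's estimate only uses the curvature bound near the endpoints of the geodesic, with radius $r_0\sim K^{-1/2}$.

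Combining the ball inclusion with the volume monotonicity, we get
\begin{align*}
    |B_{g_{t_2}}(x,A)|_{g_{t_2}}\le |B_{g_{t_2}}(x,A)|_{g_{t_1}}\le |B_{g_{t_1}}(x,A+C_1)|_{g_{t_1}}.
\end{align*}
Dividing by $\omega_nA^n$ and letting $A\to\infty$, the factor $(A+C_1)^n/A^n$ tends to $1$. Since both AVRs exist (by $\Ric\ge0$ and Bishop–Gromov at each time), this yields $\AVR_{g_{t_2}}\le \AVR_{g_{t_1}}$.

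The only delicate point is making sure the distortion constant does not depend on the distance. Perelman's lemma is exactly what provides this: the error comes only from the curvature within distance $r_0$ of the two endpoints, so it is a fixed additive constant.
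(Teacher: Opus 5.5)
Your proof is correct and follows essentially the same route as the paper. Perelman's Lemma 8.3(b) gives the additive inclusion $B_{g_{t_2}}(x,A)\subset B_{g_{t_1}}(x,A+C_1)$ with $C_1$ independent of $A$, the volume form is non-increasing (the paper uses $R\ge 0$, you use $\Ric\ge 0$), and dividing by $\omega_nA^n$ and letting $A\to\infty$ gives $\AVR_{g_{t_2}}\le\AVR_{g_{t_1}}$.

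One small slip in wording: the distance-independent constant scales like $\sqrt{K}$, not linearly in $K$. Also, the crude bound $\partial_t d_t\ge -C(n)Kd_t$ only gives a multiplicative distortion, which is not enough here, so it should not be offered as an alternative.
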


\begin{proof}
    Fix $t_1<t_2$. There is  a positive number $K$ such that
    \begin{align*}
        |\Ric_{g_t}|\le K\quad \text{ on }\quad M\times[t_1,t_2].
    \end{align*}
    Let $x\in M$ be a fixed point. By Perelman's distance distortion estimate \cite[Lemma 8.3(b)]{Per02}, for any $A\gg 1$, we have
    \begin{align*}
        B_{g_{t_2}}(x,A)\subset B_{g_{t_1}}\Big(x,A+C(K)(t_2-t_1)\Big). 
    \end{align*}
    On the other hand, since $R\ge 0$, the volume form is decreasing. Thus
    \begin{align*}
        |B_{g_{t_2}}(x,A)|_{g_{t_2}} \le \left|B_{g_{t_1}}\Big(x,A+C(K)(t_2-t_1)\Big)\right|_{g_{t_1}}\quad \text{ for all }\ A\gg1.
    \end{align*}
    It is clear that
    \begin{align*}
        \AVR_{g_{t_2}}\le \AVR_{g_{t_1}}.
    \end{align*}
\end{proof}

\subsection{The small volume of the singular part}

We continue with the proof of Proposition \ref{Prop: zero avr}. Let $\{(M,g^i_t)_{t\in(-\infty,0]}\}_{i=1}^\infty$ and $(\mathcal{X},(\nu_t)_{t\in(-\infty,0)})$ be the sequence and the limit in  \eqref{def-limiting-condition} as described at the beginning of the section. $(y_0,0)$ is the fixed base point for each $(M,g^i_t)_{t\in(-\infty,0]}$. We shall write the regular part of $(\mathcal{X},(\nu_t)_{t\in(-\infty,0)})$ as $(\mathcal{R},\fg_t,f)_{t\in(-\infty,0)}$ with
\begin{align*}
    d\nu_t=(4\pi|t|)^{-n/2}e^{-f}d\fg_t.
\end{align*}
Let $(X,d,\nu)$ be the model of the metric soliton and $(\mathcal{R}_X,\fg,f_0)$ its regular part.

    Let $z_i\in M$ be a sequence of points such that for each $i$, $(z_i,-1)$ is an $H_n$-center of $(y_0,0)$ with respect to the scaled flow $g^i_t$ (or, in other words, $(z_i,-\lambda_i)$ is an $H_n$-center of $(y_0,0)$ with respect to the unscaled flow $g_t$). Our goal is to estimate
    \begin{align*}
        \left|B_{g^i_{-1}}(z_i,A)\right|_{g^i_{-1}},\quad A\gg 1,
    \end{align*}
    when $i$ is large enough. To this end, we decompose this geodesic ball into two parts:
    \begin{align*}
        \cR^{(i)}(r,A):=B_{g^i_{-1}}(z_i,A)\cap\{r_{\Rm}\ge r\},\quad \cS^{(i)}(r,A):=B_{g^i_{-1}}(z_i,A)\cap\{r_{\Rm}<r\},\quad r>0,\ \ A<+\infty.
    \end{align*}
    Obviously, 
    \begin{align*}
        B_{g^i_{-1}}(z_i,A) =\cR^{(i)}(r,A) \sqcup \cS^{(i)}(r,A).
    \end{align*}
    In this subsection and the next, we shall estimate  the volumes of $\cS^{(i)}(r,A)$ and  $\cR^{(i)}(r,A)$, respectively.

    The singular part $\cS^{(i)}(r,A)$ is estimated by Bamler's theory.

    \begin{Lemma}\label{Lm: volume of the singular part}
        If 
        \begin{align*}
            \quad A\ge 1,\quad r\le\overline{r}(A,Y),
        \end{align*}
        then there is an $\alpha=\alpha(Y)>0$, such that
        \begin{align*}
            \left|\cS^{(i)}(r,A)\right|_{g^i_{-1}}\le C(A,Y)r^{\alpha}\quad \text{ for all }\quad i.
        \end{align*}
        Here $Y$ is the entropy bound in \eqref{eq:noncollapsing of ancient solution}.
    \end{Lemma}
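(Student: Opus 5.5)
The idea is to combine Bamler's partial regularity (Proposition \ref{Prop:partial regularity}) with the strata size estimate (Proposition \ref{Prop:size of strata}). Together they cover the low curvature-radius set by a controlled number of small $P^*$-parabolic neighborhoods, each of which has small volume.

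\emph{Step 1: reduce to a stratum.} Fix $\varepsilon=\varepsilon(Y)$ as in Proposition \ref{Prop:partial regularity}, shrinking it if needed so that Proposition \ref{Prop:size of strata} also applies. Let $(x,-1)$ be a point with $r_{\Rm}(x,-1)<r$. For any scale $r'\in(\varepsilon^{-1}r,\varepsilon\cdot\text{const})$, the contrapositive of Proposition \ref{Prop:partial regularity} shows that $(x,-1)$ is neither weakly $(n-1,\varepsilon,r')$-split, nor $(\varepsilon,r')$-static and weakly $(n-3,\varepsilon,r')$-split. Hence $(x,-1)\in\tilde S^{\varepsilon,n-2}_{\varepsilon^{-1}r,\,\bar\varepsilon}$. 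The Nash entropy lower bound $\mathcal N_{x,-1}(r'^2)\ge -Y'$ comes from \eqref{eq:noncollapsing of ancient solution}, scale invariance, and Bamler's entropy oscillation estimate. That estimate bounds $|\mathcal N_{x,-1}-\mathcal N_{z_i,-1}|$ in terms of the $W_1$-distance, which is bounded by $A+C$ because $(z_i,-1)$ is an $H_n$-center. So $Y'=Y'(A,Y)$. Since the flow is ancient, all backward time intervals are available.

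\emph{Step 2: cover.} Choose a reference scale $R_0=R_0(A,Y)$ large enough that $B_{g^i_{-1}}(z_i,A)\times\{-1\}\subset P^*(z_i,-1+R_0^{2}/2;R_0)$, or cover the ball by boundedly many such neighborhoods. This uses the $H_n$-center property: for points in the ball, the $W_1$-distances between their conjugate heat kernels at earlier times are at most $A+C$, by monotonicity of $W_1$ along conjugate heat flows. Apply Proposition \ref{Prop:size of strata} with $k=n-2$ and $\sigma=\varepsilon^{-1}r/R_0$. This gives $N\le C(Y,A)\,\sigma^{-(n-2)-\varepsilon}$ neighborhoods $P^*(x_j,t_j;\sigma R_0)$ covering the stratum.

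\emph{Step 3: volume of each piece.} Each slice $P^*(x_j,t_j;\sigma R_0)\cap\{t=-1\}$ is contained in a $g^i_{-1}$-ball of radius $C\sigma R_0\sim Cr$. This follows from Bamler's comparison of $P^*$-neighborhoods with metric balls, which uses the $H_n$-concentration. By Bishop--Gromov (recall $\Ric\ge0$), or by Bamler's general volume upper bound $|B(x,\rho)|\le C\rho^n$, each slice has volume at most $C r^n$. Summing gives
\[
|\cS^{(i)}(r,A)|\le C(A,Y)\,r^{-(n-2)-\varepsilon}\,r^n=C(A,Y)\,r^{2-\varepsilon},
\]
so we may take $\alpha=2-\varepsilon(Y)>0$. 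All constants are independent of $i$.

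The main obstacle I expect is Step 2/3: translating between $P^*$-parabolic neighborhoods and time-$(-1)$ metric balls. Two facts are needed. First, the whole ball $B(z_i,A)$ must sit in one bounded $P^*$-region. Second, each small $P^*$-neighborhood must meet the $t=-1$ slice in a set of volume $O(r^n)$. I would try Bamler's results on $P^*$ versus metric balls for the second fact, together with the volume bound from $\Ric\ge0$. If the time slice of $P^*$ is awkward, I would instead apply the size estimate with time centers chosen at $-1$ and argue through the slice directly.
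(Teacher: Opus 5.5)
\textbf{Verdict: there is a genuine gap, in Step 3.} Steps 1 and 2 are sound, with two small remarks.
\begin{itemize}
\item In Step 2 you do not need the $H_n$-center, and the base time $-1+R_0^2/2$ may exceed $0$. Monotonicity of $W_1$ alone gives $B_{g^i_{-1}}(z_i,A)\times\{-1\}\subset P^*(z_i,-1;A)$. The paper instead covers the ball by $\overline N(A,\varepsilon)$ balls of radius $1/\varepsilon$ via Bishop--Gromov; the two are equivalent.
\item In Step 1, on an ancient flow satisfying \eqref{eq:noncollapsing of ancient solution} one has $\mathcal{N}_{x,t}(\tau)\ge -Y$ at every point and scale. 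The asymptotic entropy does not depend on the base point, and $\mathcal N$ is nonincreasing in $\tau$. So $\varepsilon$ really depends only on $Y$. Your $Y'(A,Y)$ would make $\alpha$ depend on $A$, which is harmless since $2-\varepsilon\ge 1$.
\end{itemize}

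\textbf{The gap.} You claim that the time-$(-1)$ slice of $P^*(x_j,t_j;\rho)$, with $\rho=\sigma R_0$, lies in a $g^i_{-1}$-ball of radius $C\rho$. No such comparison is available here.
\begin{itemize}
\item $H_n$-concentration controls only the bottom slice $s=t_j-\rho^2$. There $(x,s)\in P^*$ means $d_{W_1}(\delta_x,\nu_{x_j,t_j;s})\le\rho$, so $x$ is close to an $H_n$-center.
\item For $(x,-1)$ with $-1>s$, membership says only that the heat kernel $\nu_{x,-1;s}$ is $\rho$-close to $\nu_{x_j,t_j;s}$.
\item To get back to $x$ itself you need $d^{g_s}_{W_1}(\delta_x,\nu_{x,-1;s})\le C\rho$. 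After that, $\Ric\ge0$ (so $g_{-1}\le g_s$) and the triangle inequality would finish.
\item That concentration estimate is exactly Bamler's comparison between $P^*$-neighborhoods and conventional parabolic neighborhoods (\cite[Theorem 9.5]{Bam20a}). It requires a lower bound on $r_{\Rm}$. The paper uses it only for points with $r_{\Rm}\ge r$, in the proof of Lemma \ref{Lm:smooth volume convergence}.
\item You would be applying it precisely on $\cS^{(i)}(r,A)$, where $r_{\Rm}<r$. Re-centering the neighborhoods at time $-1$, as in your fallback, runs into the same problem. $\Ric\ge 0$ alone does not stop a heat kernel from drifting away from its basepoint in high-curvature regions.
\end{itemize}

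\textbf{The fix.} The missing ingredient is Bamler's volume bound for time slices of $P^*$-neighborhoods, \cite[Theorem 9.8]{Bam20a}. It gives $|P^*(x,t;\rho)\cap M\times\{t'\}|_{g_{t'}}\le C(Y)\rho^n$ for every $t'$ in the time range. It bounds the slice volume directly, without comparing the slice to a metric ball. This is what the paper uses in \eqref{strata4}. With it, your count $C(A,Y)\,r^{-(n-2)-\varepsilon}\cdot r^n=C(A,Y)\,r^{2-\varepsilon}$ goes through unchanged.
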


\begin{proof}
  By Proposition \ref{Prop:partial regularity}, if $\ep= \overline{\ep}(Y)\in(0,1)$ and $r<\varepsilon$, then for any $r'\ge r/\varepsilon$, the points in $\cS^{(i)}(r,A)$ are neither weakly $(n-1,\ep,r')$-split, nor $(\ep,r')$-static and weakly $(n-3,\ep,r')$-split at the same time. Thus, 
  \begin{align*}
      \cS^{(i)}(r,A) \subset \tilde{\mathcal{S}}_{r/\varepsilon,1}^{\varepsilon,n-2}(g^i),
  \end{align*}
  where $\tilde{\mathcal{S}}_{r/\varepsilon,1}^{\varepsilon,n-2}(g^i)$ is the quantitative strata with respect to the flow $g^i_t$ in Definition \ref{Def:quantitative strata}. 
  
  By the nonnegative Ricci curvature assumption and the Bishop-Gromov comparison theorem, we can find points $\{y_j^{(i)}\}_{j=1}^{N^{(i)}}\subset \cS^{(i)}(r,A)$ with
  \begin{align}\label{strata1}
      \cS^{(i)}(r,A)\subset \bigcup_{j=1}^{N^{(i)}} B_{g^i_{-1}}\left(y_j^{(i)},1/\varepsilon\right)\quad \text{ and } \quad N^{(i)}\le \overline{N}(A,\varepsilon).
  \end{align}
  The monotonicity of the $W_1$-Wasserstein distance \cite[Lemma 2.7]{Bam20a} implies that $B_{g^i_{-1}}\left(y_j^{(i)},1/\varepsilon\right)\subset P^*\left(y_j^{(i)},-1;1/\varepsilon\right)$, and thus
  \begin{align}\label{strata2}
      \cS^{(i)}(r,A)\subset \bigcup_{j=1}^{N^{(i)}} \left(\tilde{\mathcal{S}}_{r/\varepsilon,1}^{\varepsilon,n-2}(g^i)\cap P^*\left(y_j^{(i)},-1;1/\varepsilon\right)\right).
  \end{align}

Now, applying  Proposition \ref{Prop:size of strata} to $\tilde{\mathcal{S}}_{r/\varepsilon,1}^{\varepsilon,n-2}(g^i)\cap P^*\left(y_j^{(i)},-1;1/\varepsilon\right)$ with $\varepsilon\to\varepsilon$, $r\to 1/\varepsilon$, $\sigma \to r$, we can find $\{y^{(i)}_{j,k}\}_{k=1}^{N^{(i)}_j}$, such that
\begin{align}\label{strata3}
    \tilde{\mathcal{S}}_{r/\varepsilon,1}^{\varepsilon,n-2}(g^i)\cap P^*\left(y_j^{(i)},-1;1/\varepsilon\right) \subset \bigcup_{k=1}^{N^{(i)}_j} P^*\left(y_{j,k}^{(i)},-1;r/\varepsilon\right),\quad N^{(i)}_j \le C(Y,\varepsilon) r^{-n+2-\varepsilon}.
\end{align}

Since, by \cite[Theorem 9.8]{Bam20a}, 
\begin{align}\label{strata4}
    \left|P^*\left(y_{j,k}^{(i)},-1;r/\varepsilon\right)\bigcap M\times\{-1\}\right|_{g^i_{-1}}\le C(Y)r^n\varepsilon^{-n},\quad k= 1,2,\hdots,N^{(i)}_j,\ \ j=1,2,\hdots, N^{(i)},
\end{align}
we have, combining \eqref{strata1}\eqref{strata2}\eqref{strata3}\eqref{strata4},
\begin{align*}
    \left|\cS^{(i)}(r,A)\right|_{g^i_{-1}}\le C(Y,A,\varepsilon)r^{2-\varepsilon}.
\end{align*}
Since $\varepsilon=\varepsilon(Y)$, we obtain the lemma by letting $\alpha=2-\varepsilon>0$.

\end{proof}

\subsection{The volume estimate of the regular part}

In this subsection, we proceed to estimate the size of $\mathcal{R}^{(i)}(r,A)$ via the local smooth convergence on $\mathcal{R}$.

\begin{Lemma}\label{Lm:smooth volume convergence}
    If $A\ge\underline{A}(n)$, $r\in(0,1)$, then passing to a subsequence,  we have
    \begin{align*}
        \left|\cR^{(i)}(r,A)\right|_{g^i_{-1}}\le 2\left|B_{\fg}(x_0,2A)\cap\mathcal{R}_X\right|\quad \text{ for all }\ i\ \text{ large enough}.
    \end{align*}
\end{Lemma}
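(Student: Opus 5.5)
The plan is to use the local smooth convergence on the regular part, Theorem \ref{Thm_smooth_convergence}. The key point is to show that the set $\cR^{(i)}(r,A)$ sits, for large $i$, inside the image $\psi_i(K)$ of a fixed compact set $K\subset \mathcal{R}_X\times\{-1\}$. This $K$ should be contained in $B_{\fg}(x_0,2A)\cap\cR_X$ and depend only on $r$ and $A$. The volume bound then follows from $\|\psi_i^*g^i-\fg\|_{C^0}\le\ep_i$, since that gives $|\psi_i(K)|_{g^i_{-1}}\le (1+C\ep_i)|K|_{\fg}\le 2|B_{\fg}(x_0,2A)\cap\cR_X|$.

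Step one locates the basepoint. By Theorem \ref{Thm:compactness of points}, applied to the $H_n$-centers $(z_i,-1)$ (which satisfy the $W_1$ bound with $D=\sqrt{H_n}$), after passing to a subsequence $(z_i,-1)$ converges within $\fC$ to a limiting conjugate heat flow of small variance. By Lemma \ref{Lm:timewise} the convergence is time-wise at $t=-1$. I will then argue that the $H_n$-centers stay at distance at most $C(n)$ from $\psi_i(x_0)$. The reason is that $x_0$ is a center of the soliton, so $\Var(\nu_{-1},\delta_{x_0})\le H_n$, and Theorem \ref{Thm_smooth_convergence}\ref{Thm_smooth_convergence_c} together with the $W_1$-closeness of $\nu^i_{y_0,0;-1}$ to $\nu_{-1}$ in $Z_{-1}$ gives $d_{g^i_{-1}}(z_i,\psi_i(x_0))\le C(n)$ for large $i$. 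Choosing $A\ge\underline A(n)$ lets me absorb this, so that $B_{g^i_{-1}}(z_i,A)\subset B_{g^i_{-1}}(\psi_i(x_0),\tfrac32A)$.

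Step two, the main obstacle, is to show that every $y\in\cR^{(i)}(r,A)$ lies in $\psi_i(K)$ with $K=\{x\in B_{\fg}(x_0,2A)\cap\cR_X: r_{\Rm}(x)\ge r/2\}$. The set $K$ is compact in $\cR_X$ because its points have curvature radius bounded below, and $X$ is the completion of $\cR_X$. I would argue by contradiction: suppose there are $y_i\in\cR^{(i)}(r,A)$ not in $\psi_i(K)$. The points $y_i$ are at bounded $g^i_{-1}$-distance from $z_i$, so their $W_1$-distance to $\nu^i_{y_0,0;-1}$ is bounded. Theorem \ref{Thm:compactness of points} then gives a limit within $\fC$. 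Since $|\Rm|\le r^{-2}$ on $P(y_i,-1;r)$, Theorem 9.24 of \cite{Bam23} shows the limit is a regular point $y_\infty\in\cR_{-1}$ with an unscathed neighborhood on which $|\Rm|\le r^{-2}$. Hence $r_{\Rm}(y_\infty)\ge r/2$, after shrinking slightly for the forward-time part, which is fine since $-1<0$.

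Theorem \ref{Thm_smooth_convergence}\ref{Thm_smooth_convergence_b} then gives $\psi_i^{-1}(y_i)\to y_\infty$. What remains is to show $d_{\fg}(x_0,y_\infty)\le \tfrac32 A+o(1)<2A$. Distances are only lower semicontinuous in the favorable direction here, since $(\cR_X,\fg)$ is incomplete, so I would instead use the embedding into $Z_{-1}$ (Theorem \ref{Thm_smooth_convergence}\ref{Thm_smooth_convergence_c}): $d_{-1}(x_0,y_\infty)=\lim d^Z(\varphi^i(\psi_i x_0),\varphi^i(y_i))\le\tfrac32A$. Since the points are regular, this metric distance is the $\fg$-distance used to define $B_{\fg}$. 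A slight enlargement of $K$ to a compact neighborhood then yields the contradiction, and the volume comparison finishes the proof.
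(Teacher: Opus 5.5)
Your Step 1 and the final identification of the limiting distance with $d_{\fg}$ through $Z_{-1}$ are fine; the paper does the same computation at the end of its claim. A contradiction argument would also be an acceptable substitute for the paper's finite Bishop--Gromov covering.

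The gap is exactly at the step you flag as the main obstacle. Theorem \ref{Thm:compactness of points} only gives you a limiting conjugate heat flow $(\mu_t)_{t<-1}$ with $\Var(\mu_t)\to 0$. It does not give a point $y_\infty\in\cX_{-1}$, let alone a regular one. Moreover, \cite[Theorem 9.24]{Bam23}, as stated in \S2.3, takes as a \emph{hypothesis} that $(y_i,t_i)$ converges within $\fC$ to a point of $\cR_{t_0}$. Its conclusion only concerns a parabolic neighbourhood of that already-regular limit. Invoking it to conclude that the limit is regular is therefore circular, and nothing in your argument rules out $\mu_t$ concentrating near $\cS$ as $t\nearrow -1$. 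Theorem \ref{Thm_smooth_convergence}(b) likewise needs a regular limit point as input, so everything after this step is unsupported.

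The missing idea, which is the content of the paper's Claim, is to step back to an earlier time $t'=-1-r'^2$ with $r'\ll r$, where a regular point is available by density.
\begin{enumerate}
\item Since $\Var(\mu_{t'})\le H_n r'^2$, pick $y'\in\cR_{t'}$ with $d_{W_1}(\delta_{y'},\mu_{t'})\le C(n)r'$, and put $(y_i',t')=\psi_i(y')$.
\item Time-wise convergence at $t'$ (Lemma \ref{Lm:timewise}) and Theorem \ref{Thm_smooth_convergence}(c) give $d^{g^i_{t'}}_{W_1}(\nu^i_{y_i,-1;t'},\delta_{y_i'})\le C(n)r'$.
\item The bound $r_{\Rm}(y_i,-1)\ge r$ and \cite[Theorem 9.5]{Bam20a} give $d^{g^i_{t'}}_{W_1}(\nu^i_{y_i,-1;t'},\delta_{y_i})\le C(n)r'$.
\item Hence $d_{g^i_{t'}}(y_i,y_i')\le C(n)r'$, and the same holds at time $-1$ by $\Ric\ge 0$.
\item Distance distortion then gives $r_{\Rm}(y_i',t')\ge \tfrac{99}{100}r$ and $(y_i,-1)\in P(y_i',t';\tfrac{r}{100})$.
\item Now \cite[Theorem 9.24]{Bam23} applies legitimately at the regular point $y'$. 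The cube $P(y';\tfrac{98}{100}r)$ is unscathed, since world-lines on the soliton never die, and the convergence is smooth there. So $\psi_i^{-1}(y_i,-1)$ stays in a fixed precompact subset of $\cR$ and subconverges to some point of $\cR_{-1}$.
\end{enumerate}
With this inserted, the rest of your argument goes through.

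Separately, your claim that $\{x\in B_{\fg}(x_0,2A)\cap\cR_X:\ r_{\Rm}(x)\ge r/2\}$ is compact needs total boundedness of balls in $X$, which you do not justify. You can avoid it: the contradiction argument directly shows $\cR^{(i)}(r,A)\times\{-1\}\subset V_i$ with $\psi_i^{-1}$ landing in $B_{\fg}(x_0,2A)$. The $C^0$ bound on $U_i$ then suffices.
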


\begin{proof}

Let 
\begin{align}\label{eq:limiting diffeo}
    \psi_i:\mathcal{R}\supset U_i\to V_i\subset M\times (-\infty,0]
\end{align}
be the sets and diffeomorphism defined in Theorem \ref{Thm_smooth_convergence}. By the smooth convergence on $\mathcal{R}$, it is obviously sufficient to show that
\begin{align*}%\label{eq:covering the regular part}
    M\times\{-1\}\supset \psi_i\left(U_i\cap B_{\fg}(x_0,2A)\right)\supset \cR^{(i)}(r,A)
\end{align*}
for all $i$ large enough, depending on $r$ and $A$.

    For each $i$, we find a finite sequence of points $\left\{x^{(i)}_k\right\}_{k=1}^{N_i}$ in $\cR^{(i)}(r,A)$ such that 
    \begin{align*}
        \cR^{(i)}(r,A)\subset \bigcup_{k=1}^{N_i} B_{g^i_{-1}}\left(x_k^{(i)},\tfrac{r}{3}\right),\qquad N_i\le \overline{N}(A,r),
    \end{align*}
    by the Bishop-Gromov comparison theorem, where $\overline{N}(A,r)$ is independent of $i$. By passing to a (not relabeled) subsequence, we may assume that $N_i\equiv N \le \overline{N}(A,r)$ for all $i$. Next, we show that, for each $k\in\{1,2,\hdots,N\}$, the sequence $\left\{x_k^{(i)}\right\}_{i=1}^{\infty}$ converges, after passing to a subsequence, to some point in $\cR_{-1}$. 
    \\

%---------------------claim-----------begin---------------------
\noindent\textbf{Claim. } \emph{
For each $k\in\{1,2,\hdots,N\}$, there is a point $\tilde{x}_k\in B_{\fg}(x_0,A+C(n))\subset\mathcal{R}_X=\mathcal{R}_{-1}$, such that, after passing to a subsequence,}   
\begin{align}\label{eq:points converging smooth}
    \psi_i^{-1}\left(x^{(i)}_k,-1\right)\xrightarrow{i\to\infty} \tilde{x}_k.
\end{align}
\emph{Furthermore, $P\left(\tilde x_k;\tfrac{1}{2}r\right)$ is unscathed and}
\begin{align}\label{eq:cube converging smooth}
   \psi_i\left( P\left(\tilde x_k;\tfrac{1}{2}r\right)\right)\supset P\left( x^{(i)}_k,-1;\tfrac{1}{3}r\right)\quad\text{for $i$ large enough}.
\end{align}

\begin{proof}[Proof of the claim]
    For the sake of notational simplicity, we shall denote $x^{(i)}_k$ by $x^{(i)}$ and $\tilde{x}_k$ by $\tilde{x}$, and suppress the subindex.  Let $0<r'\ll r$ be a small number to be fixed, and set $t'=-1-r'^2.$ Since $x^{(i)}\in B_{g^i_{-1}}(z_i,A)$, we have, by \cite[Lemma 3.2, Definition 3.10]{Bam20a}
    \begin{align*}
        d^{g^i_{-1}}_{W_1}\left(\delta_{x^{(i)}}, \nu^i_{y_0,0\,;\,-1}\right) \le &\ d^{g^i_{-1}}_{W_1}\left(\delta_{x^{(i)}}, \delta_{z_i}\right) + d^{g^i_{-1}}_{W_1}\left(\delta_{z_i}, \nu^i_{y_0,0\,;\,-1}\right)  \le A+\sqrt{2H_n}.
    \end{align*}
    Thus, by Theorem \ref{Thm:compactness of points}, we may find a conjugate heat flow $(\tilde{\nu}_t)_{t<-1}$ on the metric soliton $(\mathcal{X},\nu_t)$, such that
    \begin{equation*}
        \left(\nu^i_{x^{(i)},-1\,;\,t}\right)_{t\le -1}\xrightarrow[i\to\infty]{\fC} (\tilde{\nu}_t)_{t<-1},\qquad \lim_{t\nearrow -1}\Var(\tilde{\nu}_t)=0,
    \end{equation*}
   where $\mathfrak{C}$ is the correspondence in \eqref{def-limiting-condition}. Since the limit flow pair is also $H_n$-concentrated \cite[Theorem 7.4]{Bam23}, by \cite[Definition 2.6, Proposition 3.23]{Bam23}, we have
    \begin{align*}
        \iint_{\mathcal{X}_t\times\mathcal{X}_t}d_t^2(x,y)\,d\tilde\nu_t(x)d\tilde\nu_t(y) =\Var(\tilde\nu_t)\le H_n|t+1|\quad \text{ for all }\quad t<-1
    \end{align*}
    In particular, we can find a $y'\in\mathcal{X}_{t'}$, such that
    \begin{align*}
        \Var(\delta_{y'},\tilde{\nu}_{t'})=\int_{\mathcal{X}_{t'}}d_{t'}^2(x,y')\,d\tilde\nu_{t'}(x) \le \iint_{\mathcal{X}_{t'}\times\mathcal{X}_{t'}}d_{t'}^2(x,y)\,d\tilde\nu_{t'}(x)d\tilde\nu_{t'}(y) \le H_n|t'+1|.
    \end{align*}
    Since $\mathcal{R}_{t'}$ is dense in $\mathcal{X}_{t'}$, we may, without loss of generality, assume $y'\in \mathcal{R}_{t'}$, this causes at most an additional dimensional factor. We then have, by \cite[Lemma 2.8]{Bam23} and $t'=1-r'^2$,
    \begin{equation}\label{ineqHn12}
        d_{W_1}^{\mathcal{X}_{t'}}(\delta_{y'},\tilde\nu_{t'})\le\sqrt{\Var(\delta_{y'},\tilde\nu_{t'})}\le C(n)r'.
    \end{equation}
Since $y'\in\mathcal{R}_{t'}\subset \mathcal{R}$, for all $i$ large enough, we may set
    $$(y'_i,t'):=\psi_i(y')\in M\times\{t'\},$$
    according to Theorem \ref{Thm_smooth_convergence}, where $\psi_i$ is the diffeomorphism in \eqref{eq:limiting diffeo}.
    
    Next, we estimate the space-time distance between $\left(x^{(i)},-1\right)$ and $(y'_i,t').$ Since by Lemma \ref{Lm:timewise}, the convergence \eqref{def-limiting-condition} is time-wise at $t'$, we have (see \cite[Definition 6.7]{Bam23})
    \begin{equation}\label{ineqdW1Z1}
        d_{W_1}^{Z_{t'}}\left((\varphi_{t'}^i)_*\nu_{x^{(i)},-1;t'}^i,(\varphi_{t'})_*\tilde\nu_{t'}\right)\to0.
    \end{equation}
$\fC=(Z_t,\varphi^i_t,\varphi_t)$ is the correspondence, $\varphi_{t}^i:(M,g^i_t)\to (Z_t,d_t^Z)$ and $\varphi_t:(\cX_t,d_t)\to (Z_t,d^Z_t)$ are isometric embeddings.

    Putting (\ref{ineqHn12}) into $\fC$ yields
    \begin{equation}\label{ineqdW1Z2}
        d_{W_1}^{Z_{t'}}\left( (\varphi_{t'})_*\delta_{y'} ,(\varphi_{t'})_*\tilde\nu_{t'} \right)\le C(n)r'.
    \end{equation}
  The definition of $(y_i',t')$ implies that (see Theorem \ref{Thm_smooth_convergence}(c))
    \begin{equation}\label{ineqdW1Z3}
        d_{W_1}^{Z_{t'}}\left((\varphi_{t'}^i)_*\delta_{y_i'},(\varphi_{t'})_*\delta_{y'}\right)\to0.
    \end{equation}
    By (\ref{ineqdW1Z1})-(\ref{ineqdW1Z3}) and the triangle inequality, we have
        $d_{W_1}^{Z_{t'}}\left( (\varphi_{t'}^i)_*\nu_{x^{(i)},-1;t'}^i,  (\varphi_{t'}^i)_*\delta_{y_i'}  \right) \le C(n)r'$ for all $i$ large enough. This is then equivalent to
    \begin{equation}\label{ineq not spacetime}
        d_{W_1}^{g^i_{t'}}(\nu^i_{x^{(i)},-1;t'},\delta_{y_i'})\le C(n)r' \quad \text{ for all }\ i\ \text{ large enough.}
    \end{equation}

    By the assumption $x^{(i)}\in\cR^{(i)}(r,A)$,  we have $r_{\Rm}(x^{(i)},-1)\ge r$. While $r'\ll r$, we have  $|\Ric|\le r'^{-2}$ on $P(x^{(i)},-1;r')$. Thus, by \cite[Theorem 9.5]{Bam20a},
    $d_{W_1}^{g^i_{t'}}(\nu^i_{x^{(i)},-1;t'},\delta_{x^{(i)}})\le C(n)r'$. 
    In combination with (\ref{ineq not spacetime}), we have
    \begin{equation*}
       d_{g^i_{-1}}\left(x^{(i)},y_i'\right)\le d_{g^i_{t'}}\left(x^{(i)},y_i'\right)=d_{W_1}^{g^i_{t'}}(\delta_{x^{(i)}},\delta_{y_i'}) \le C(n)r',
    \end{equation*}
    where the first inequality is due to the nonnegative assumption of the Ricci curvature. Thus, we have $(y_i',t')\in P\left(x^{(i)},-1;C(n)r'\right).$ 

    Now we fix $r'=r'(r)\ll r$ to ensure that, via the standard distance distortion estimate, 
    \begin{align*}
        r_{\Rm}(y_i',t')\ge \tfrac{99r}{100}\quad \text{ and }\quad \left(x^{(i)},-1\right)\in P\left(y_i',t';\tfrac{1}{100}r\right).
    \end{align*}
    By Theorem \ref{Thm_smooth_convergence}, the parabolic cube $P\left(y';\tfrac{98r}{100}\right)\Subset\mathcal{R}$ is unscathed. Note that no world-line in the smooth part of a metric soliton can become extinct in time, since the world-lines are simply integral curves of the complete field $\nabla f_0$. By the smooth convergence on $P\left(y';\tfrac{98r}{100}\right)$, we then have
    \begin{align*}
        \psi_i^{-1}\left(x^{(i)},-1\right)\in P\left(y';\tfrac{98r}{100}\right)\quad \text{ for all }\ i\ \text{ large enough},
    \end{align*}
    and, after passing to a sequence, 
    \begin{align*}
        \psi^{-1}_i\left(x^{(i)},-1\right)\to \tilde{x}\in P\left(y';\tfrac{2r}{100}\right)\cap\mathcal{R}_{-1}.
    \end{align*}
    If we take $r'$ to be even smaller if necessary (depending only on $n$ and $r$), then the standard distance distortion estimate implies that
    $$P\left(\tilde x;\tfrac{1}{2}r\right)\subset P\left(y';\tfrac{98r}{100}\right),$$
    and \eqref{eq:points converging smooth}\eqref{eq:cube converging smooth} follow easily by the smooth convergence on $P\left(\tilde x;\tfrac{1}{2}r\right)\Subset \mathcal{R}_{-1}$.

To see $\tilde{x}\in B_{\fg}(x_0,A+C(n))$, we compute
\begin{align*}
    d_{\fg}(x_0,\tilde{x})\le &\ d^Z_{-1}\left(\varphi_{-1}(\tilde{x}),\varphi^i_{-1}\left(x^{(i)}\right)\right)+d_{g^i_{-1}}\left(x^{(i)},z_i\right) + d^Z_{-1}\left(\varphi_{-1}(x_0),\varphi^i_{-1}\left(z_i\right)\right)
    \\
    \le &\ d^{Z}_{-1}\left(\varphi_{-1}(\tilde{x}),\varphi^i_{-1}\left(x^{(i)}\right)\right)+d_{g^i_{-1}}\left(x^{(i)},z_i\right) 
    \\
    &\ + d^{Z_{-1}}_{W_1}\left((\varphi_{-1})_*(\nu_{-1}),(\varphi^i_{-1})_*\left(\nu^i_{y_0,0\,;\,-1}\right)\right) +d^{\fg}_{W_1}\left(\delta_{x_0},\nu_{-1}\right)+d^{g^i_{-1}}_{W_1}\left(\delta_{z_i},\nu^i_{y_0,0\,;\,-1}\right).
\end{align*}
Since the convergence \eqref{def-limiting-condition} is time-wise at $t=-1$ by Lemma \ref{Lm:timewise}, taking a limit at the right-hand-side of the above inequality, we conclude from Theorem \ref{Thm_smooth_convergence}(c) that
$$d_{\fg}(x_0,\tilde x)\le C(n)+A.$$

\end{proof}

%---------------------------------end----------------------------claim------------

We continue with the proof of the lemma. Applying the claim successively to $\{x^{(i)}_k\}_{i=1}^\infty$ for $k=1,2,\hdots,N$, we find $\tilde x_k\in B_{\fg}(x_0,A+C(n))\cap\mathcal{R}_X$, $k=1,2,\hdots,N$, such that after passing to subsequences for finitely many times, \eqref{eq:points converging smooth} and \eqref{eq:cube converging smooth} both hold. Thus, if $r<1$ and $A\ge \underline{A}(n)$, we have
\begin{align*}
       K:=\left( \bigcup_{k=1}^N P\left(\tilde{x}_k;\tfrac{1}{2}r\right)\right)\cap\mathcal{R}_{-1}\Subset B_{\fg}(x_0,A+1+C(n))\cap\cR_X\subset B_{\fg}(x_0,2A)\cap\cR_X,
\end{align*}
for all $i$ large enough. Since 
\begin{align*}
    \psi_i(K)\supset \left(\bigcup_{k=1}^NP\left(x^{(i)}_k,-1;\tfrac{1}{3}r\right)\right)\cap M\times\{-1\} = \bigcup_{k=1}^NB_{g^i_{-1}}\left(x^{(i)}_k,\tfrac{1}{3}r\right)\supset\mathcal{R}^{(i)}(r,A), 
\end{align*}
the conclusion follows from the smooth convergence on $K\Subset\mathcal{R}_{-1}$.  
\end{proof}

\subsection{Completion of the proof}

\begin{proof}[Proof of Proposition \ref{Prop: zero avr}]
    
    Since the tangent flow at infinity is non-Ricci-flat, we may assume that 
    \begin{align*}
        R_{\fg}\ge \delta>0
\quad \text{ on }  \quad   \mathcal{R}_X=\mathcal{R}_{-1}.
\end{align*}
Then, by Theorem \ref{thmavr}, Lemma \ref{Lm: volume of the singular part}, and Lemma \ref{Lm:smooth volume convergence}, if we fix $A\ge\underline{A}(n)$ and $r\le \overline{r}(A,Y)$ and pass to a subsequence, we have
\begin{align*}
    \AVR_{g^i_{-1}}\le \frac{\left|B_{g^i_{-1}}(z_i,A)\right|}{\omega_n A^n}\le C(n)\frac{(2A)^{n-2\delta}}{A^n}+\frac{C(A,Y)r^{\alpha}}{A^n}\le C(n)A^{-2\delta}+C(A,Y)r^{\alpha},
\end{align*}
for all $i$ large enough; here we have applied the monotonicity of the volume ratio. Due to Lemma \ref{Lm:Monotonicity of AVR}, we have
\begin{align*}
    \lim_{t\to-\infty}\AVR_{g_t}=\lim_{i\to\infty}\AVR_{g^i_{-1}} \le C(n)A^{-2\delta}+C(A,Y)r^{\alpha},
\end{align*}
for all $A\ge\underline{A}(n)$ and $r\le \overline{r}(A,Y)$. Taking $r\to 0$ first and then $A\to \infty$, we get
\begin{align*}
     \lim_{t\to-\infty}\AVR_{g_t}=0,
\end{align*}
and hence, by Lemma \ref{Lm:Monotonicity of AVR},
\begin{align*}
    \AVR_{g_t}\equiv 0\quad \text{ for all }\quad t\in(-\infty,0].
\end{align*}

\end{proof}

% --------------------Sction 5----------------------------------------------------------------------------------------------------------------

\section{Ancient Ricci flow with positively pinched Ricci curvature}

We now consider an ancient solution described at the beginning of \S 4. As we have already explained, Theorem \ref{thm tangent flow character2} can be reduced to the following proposition.

\begin{Proposition}\label{Prop:pinching compact}
    Let $(M^n,g_t)_{t\in(-\infty,0]}$ be an ancient solution as in the statement of Proposition \ref{Prop: zero avr}. Assume that there is some $\varepsilon>0$, such that
    \begin{align*}
        \Ric_{g_t}\ge \varepsilon R_t g_t\quad \text{ on }\quad M\times(-\infty,0].
    \end{align*}
    If a tangent flow at infinity obtained in \eqref{def-limiting-condition} has non-Ricci-flat regular part, then $M$ is a closed manifold.
\end{Proposition}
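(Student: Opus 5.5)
Under the hypotheses of Proposition \ref{Prop:pinching compact}, suppose for contradiction that $M$ is noncompact. The plan is to show that the tangent flow at infinity then has too little volume growth to coexist with the pinching. This comparison is done on the model $(X,d,\nu)$ of the tangent flow.

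\textbf{Step 1: pinching on the soliton.} The pinching condition $\Ric\ge\varepsilon Rg$ is invariant under parabolic rescaling. Theorem \ref{Thm_smooth_convergence} gives smooth convergence on $\mathcal{R}$, so the pinching passes to the limit: $\Ric_{\fg}\ge\varepsilon R_{\fg}\fg$ on $\mathcal{R}_X$. In particular $\Ric_{\fg}\ge0$ there. Since the regular part is non-Ricci-flat, Theorem \ref{thmR>} gives $R\ge\delta>0$, and hence $\Ric_{\fg}\ge\varepsilon\delta\,\fg$ on $\mathcal{R}_X$.

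\textbf{Step 2: boundedness of the soliton.} Next we show that a soliton with $\Ric\ge\varepsilon\delta\fg$ is bounded. Along an integral curve $\sigma$ of $\nabla f_0$, equation \eqref{eq:Ricci identity and f} gives
\[
\frac{d}{ds}R=2\Ric(\nabla f_0,\nabla f_0)\ge 2\varepsilon R|\nabla f_0|^2 = 2\varepsilon R\,\frac{d}{ds}f_0 .
\]
Integrating, $R\ge c\,e^{2\varepsilon f_0}$ along forward integral curves. Fix a point $x$ at distance $D\gg1$ from $x_0$. The curve argument in the proof of Theorem \ref{thmR>} (run forward or backward from $x$) connects $x$ to the ball $B_{\fg}(x_0,D_0)$. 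Hence $R(x)\ge c\,e^{2\varepsilon(f_0(x)-C)}$, which grows exponentially in $d_{\fg}(x_0,x)^2$ by Theorem \ref{thm<f<}. On the other hand $R\le f_0-W$ by \eqref{eq: basic-properties-2}, which is only quadratic. This is a contradiction for $d_{\fg}(x_0,x)$ large. Therefore $\mathcal{R}_X\subset B_{\fg}(x_0,D_1)$, and $(X,d)$, the metric completion, is bounded, i.e.\ compact-diameter. This step has to be made rigorous for the incomplete $\mathcal{R}_X$. The point is that $\nabla f_0$ is complete, so every backward integral curve from a far point enters the fixed ball, exactly as in Theorem \ref{thmR>}.

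\textbf{Step 3: transfer back to $M$ (main obstacle).} We must show that boundedness of $X$ forces $M$ to be compact; I expect this transfer to be the hardest step. Since $\Ric\ge0$ on $(M,g^i_{-1})$, for noncompact $M$ there is a ray from $z_i$. Take a point $w_i$ at $g^i_{-1}$-distance $L\gg D_1+C(n)$ from $z_i$. I would try to derive a contradiction in one of two ways.

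(a) Use Lemma \ref{Lm:smooth volume convergence} and Lemma \ref{Lm: volume of the singular part} to estimate the volume of the annulus $B_{g^i_{-1}}(z_i,L)\setminus B_{g^i_{-1}}(z_i,L/2)$. Its regular part must map into $\mathcal{R}_X\setminus B_{\fg}(x_0,L/2-C)=\emptyset$ (by the Claim's distance bound), so the annulus volume is at most $C(L,Y)r^\alpha\to0$. But Bishop--Gromov with $\Ric\ge0$ and the noncollapsing volume lower bound at scale $1$ (from \cite[Theorem 9.8]{Bam20a}-type bounds on a noncollapsed flow) gives a positive lower bound $c(Y)$ for the volume of a unit ball centered on the ray near $w_i$. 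This is a contradiction.

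(b) Alternatively, use the Myers-type bound $\Ric_{g_t}\ge\varepsilon R g_t$. Then show $R_{g^i_{-1}}\ge\delta/2$ on most of $B_{g^i_{-1}}(z_i,L)$ via smooth convergence, and apply a segment-based Bonnet--Myers argument on the regular region.

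I would pursue (a), since it only uses lemmas already established. The delicate point is the uniform volume lower bound for unit balls far from $z_i$ at scale $1$, which follows from the Nash entropy bound \eqref{eq:noncollapsing of ancient solution} and \cite[Theorem 6.1]{Bam20a}. Once $M$ is bounded at time $-\lambda_i$, it is compact, and hence closed at all times.
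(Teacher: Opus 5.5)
Your route differs from the paper's and can be made to work. Step 1 matches the paper. Steps 2 and 3 each have a gap as written, but both are fixable.

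\textbf{Step 2.} You cannot simply reuse the curve argument of Theorem \ref{thmR>}. That argument assumed $R(x)\le 1$, which is what keeps $|\nabla f_0|^2=f_0-W-R$ of order $d_{\fg}(x_0,\cdot)^2$ along the backward curve. Here you want an upper bound on $R(x)$, so you must also handle backward curves that never enter $B_{\fg}(x_0,D_0)$.

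The fix is as follows. Along such a curve $f_0\downarrow F_\infty$ and $R\downarrow R_\infty$. Since $\int_{-\infty}^0|\nabla f_0|^2\,ds\le f_0(x)-W$, you get $R_\infty=F_\infty-W=:a\ge D_0^2/5-C$. The quantity $\log R-2\varepsilon f_0$ is nondecreasing along the flow, so $b:=f_0(x)-W\ge R(x)\ge a\,e^{2\varepsilon(b-a)}$. Once $2\varepsilon a>1$, this forces $b=a$, so $x$ is a critical point of $f_0$. Hence the open set $\{x\in\mathcal{R}_X: d_{\fg}(x_0,x)>D_1\}$ would consist entirely of critical points. There $\nabla^2 f_0=0$, so $\Ric=\tfrac12\fg$ and $R=n/2$, contradicting $R=f_0-W\gg n/2$.

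\textbf{Step 3.} The lower bound for the annulus volume cannot come from a local noncollapsing estimate at $w_i$. \cite[Theorem 6.1]{Bam20a} and \cite[Theorem 9.8]{Bam20a} are upper volume bounds. Bamler's local lower bound needs $R\le\rho^{-2}$ on the ball, and that is unavailable here: you have just shown every point of the annulus has $r_{\Rm}<r$.

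Use Bishop--Gromov relative to $z_i$ instead. If $d_{g^i_{-1}}(z_i,w_i)=3L/4$, then $B(w_i,L/4)$ lies in the annulus and $|B(w_i,L/4)|\ge 4^{-n}|B(z_i,L/4)|$. The bound $|B_{g^i_{-1}}(z_i,\sqrt{2H_n})|\ge c(n)e^{-Y}$ follows from $\nu^i_{y_0,0;-1}(B(z_i,\sqrt{2H_n}))\ge\tfrac12$ and Bamler's heat kernel upper bound (or from smooth convergence near $x_0$).

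Also, the Claim in Lemma \ref{Lm:smooth volume convergence} only gives $d_{\fg}(x_0,\tilde x)\le A+C(n)$. You need the reverse inequality $d_{\fg}(x_0,\tilde x)\ge\limsup_i d_{g^i_{-1}}(z_i,x^{(i)})-C(n)$, which comes from the same triangle inequality. With these fixes, fixing $L$ and letting $r\to 0$ in Lemma \ref{Lm: volume of the singular part} gives the contradiction.

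\textbf{Comparison.} At this point the paper neither proves the soliton is bounded nor uses stratification. It pulls $R\ge\delta$ back to $M$ through the conjugate heat kernel: $\int R\,d\nu^i_{x_i,-1/2;-1}\ge\delta/2$ by Fatou and smooth convergence of the kernels on $\mathcal{R}_{-1}$. It then uses that $R$ is a supersolution of the heat equation, together with Bamler's $\Phi$-Lipschitz gradient estimate (Lemma \ref{Lm:local bound for scalar curvature}). This gives $R\ge\delta/4$ at \emph{every} point of $B_{g^i_{-1/2}}(z_i,A)$, high-curvature points included. Pinching and Bonnet--Myers on $M$ then finish.

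The paper's route yields a pointwise, quantitative scalar curvature lower bound on $M$ and uses the pinching only on $M$. Your route uses the pinching only on the limit. There it gives an extra structural fact: a noncollapsed metric soliton with pinched Ricci curvature on its regular part is bounded, an analogue of Ni's result. The cost is relying on the quantitative-stratification volume estimate.

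Your option (b) would need more than smooth convergence, since smooth convergence gives no lower bound for $R$ at points with small $r_{\Rm}$. Heat-kernel averaging is exactly what removes this issue in the paper.
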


The key idea of this proof is to improve the lower bound of the scalar curvature by convolution of heat kernel. Note that $R$ is a super-solution of the heat equation, so if $\int_M R\,d\nu_{x,t\,;\,t'}\ge \delta$, we would obtain a lower bound of $R$ near $(x,t)$. This local lower bound, by a covering argument, can be extended to a global lower bound, which compels the manifold to be compact by the pinching condition of the Ricci curvature.

\subsection{Local lower bound for the scalar curvature}

We apply Bamler's gradient estimate for the heat equation to obtain a local lower bound for the scalar curvature. The idea of the following lemma was also applied in \cite[Proposition 5.2]{CMZ23}.

\begin{Lemma}\label{Lm:local bound for scalar curvature}
Let $(M^n,g_t)_{t\in[-r^2,0]}$ be a Ricci flow with bounded curvature. Suppose furthermore that $R_{g_t}\ge 0$ on $M\times[-r^2,0]$. If for some $x\in M$, we have
\begin{align*}
    \int_M R_{g_{-r^2}}\,d\nu_{x,0\,;\,-r^2}\ge \delta r^{-2}>0,
\end{align*}
then we have
\begin{align*}
    R_{g_{0}}\ge n\sqrt{2}\Phi\left(\tfrac{1}{\sqrt{2}}\Phi^{-1}(\delta/n)-A\right) r^{-2}\quad \text{ on }\quad B_{g_0}(x,Ar).
\end{align*}
where $$\Phi(s)=\int_{-\infty}^s(4\pi)^{-1/2}e^{-r^2/4}dr,$$ is defined in \cite[Section 4]{Bam20a}.
\end{Lemma}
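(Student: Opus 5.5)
We plan to compare $R$ with the solution of the heat equation carrying its initial data at time $-r^2$, and then apply Bamler's gradient estimate for the heat equation. By parabolic rescaling we may take $r=1$. Along the Ricci flow, $\partial_t R=\Delta R+2|\Ric|^2\ge \Delta R$, so $R$ is a supersolution of the heat equation. We will work with the truncation $\min\{R,n\}$. Since $R\ge0$, we have $0\le \min\{R,n\}\le n$. At time $-1$, set $u_{-1}:=\min\{R_{g_{-1}},n\}/n\in[0,1]$, and let $u$ be the bounded solution of the heat equation on $[-1,0]$ with this initial data. Bounded curvature guarantees that the maximum principle applies. Since $\min\{R,n\}/n$ is a supersolution (a minimum of two supersolutions), we get $R/n\ge u$ on $M\times[-1,0]$.

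To use the hypothesis we must control the truncation. The hypothesis is $\int R_{g_{-1}}\,d\nu_{x,0;-1}\ge\delta$, and we need $\int u_{-1}\,d\nu\ge \delta/n$. This is the delicate point, since truncation decreases the integral. We will handle it by not truncating and arguing differently: write $u(x,0)=\int_M R_{g_{-1}}\,d\nu_{x,0;-1}/n$ for the solution with initial data $R_{g_{-1}}/n$. Then $R(\cdot,0)/n\ge u(\cdot,0)$ and $u(x,0)\ge\delta/n$. If $u$ is bounded by $1$, Bamler's estimate applies directly. Otherwise we use the fact that the target bound $\Phi(\tfrac1{\sqrt2}\Phi^{-1}(\delta/n)-A)$ is monotone in $\delta$. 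Indeed, $\int\min\{R,n\}\,d\nu\ge\min\{\int R\,d\nu,\ \ldots\}$ is not automatic. So the fallback is to truncate at a level $L$ and use the sharper trace bound $R\le n|\Ric|$. We expect this bookkeeping, making the heat solution take values in $[0,1]$ while keeping its value at $(x,0)$ at least $\delta/n$, to be the main obstacle. The factor $n$ in the statement suggests the intended normalization is $u=\min\{R,n\}/n$, together with a remark that we may assume $\delta\le n$ (else replace $\delta$ by $n$ after rescaling).

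With $u:M\times[-1,0]\to[0,1]$ solving the heat equation, we invoke Bamler's result \cite[Section 4]{Bam20a}: the function $\Phi^{-1}(u(\cdot,t))$ is $(2|t+1|)^{-1/2}$-Lipschitz. At $t=0$ the Lipschitz constant is $1/\sqrt2$. Hence for $y\in B_{g_0}(x,A)$,
\begin{align*}
\Phi^{-1}(u(y,0))\ge \Phi^{-1}(u(x,0))-\tfrac{1}{\sqrt2}A\ge\Phi^{-1}(\delta/n)-\tfrac{1}{\sqrt2}A .
\end{align*}
Applying $\Phi$, which is increasing, gives $u(y,0)\ge\Phi(\Phi^{-1}(\delta/n)-A/\sqrt2)$. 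Combining this with $R\ge nu$ and rescaling back by $r^{-2}$ yields a lower bound on $B_{g_0}(x,Ar)$. This matches the stated bound up to the placement of the factors $\sqrt2$ coming from the precise normalization of $\Phi$ in \cite{Bam20a}. We would adjust those constants to match the paper's convention, and note that $\Phi(\tfrac{1}{\sqrt2}\Phi^{-1}(\cdot)-A)$ arises if one uses the Lipschitz bound for $\Phi^{-1}$ relative to the rescaled $\Phi$.
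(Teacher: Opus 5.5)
Your comparison step ($R\ge u$ for the heat solution $u$ with initial data $R_{g_{-1}}$, after rescaling to $r=1$) and your plan to finish with Bamler's gradient estimate are what the paper does. However, the step you yourself flag as the main obstacle is never resolved, and it is the crux of the lemma. \cite[Theorem 4.1]{Bam20a} needs the solution to take values in $(0,1)$ on the whole interval where it is applied. Running it on $[-1,0]$ with data $R_{g_{-1}}/n$ would therefore require $R_{g_{-1}}\le n$, which is not given. Truncation does not fix this: $\int_M\min\{R_{g_{-1}},n\}\,d\nu_{x,0;-1}$ can be far below $\delta$ when $R_{g_{-1}}$ is large on a set of small $\nu_{x,0;-1}$-measure, so the lower bound at $(x,0)$ is lost. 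The fallback via a trace inequality for $\Ric$ does not touch this issue. ``Replacing $\delta$ by $n$'' also makes no sense, since $\Phi^{-1}(1)=+\infty$.

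The missing idea is an a priori \emph{upper} bound for the untruncated solution $u(y,s)=\int_M R_{g_{-1}}\,d\nu_{y,s;-1}$ once you move away from the initial time. The bound is $\int_M R\,d\nu_{y,s;s-\tau}\le \frac{n}{2\tau}$. The paper cites \cite[Proposition 5.13]{Bam20a}; it is the standard consequence of $\mathcal{W}\le\mathcal{N}$, namely $\tau\int_M(|\nabla f|^2+R)\,d\nu\le\frac n2$. With $\tau=s+1$ this gives $0<u<n$ on $M\times[-\tfrac12,0]$, and incidentally forces $\delta\le n/2$. You then apply \cite[Theorem 4.1]{Bam20a} to $u/n$ only on $[-\tfrac12,0]$, an interval of length $\tfrac12$, so $\Phi^{-1}(u(\cdot,0)/n)$ is $\sqrt2$-Lipschitz. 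This halved interval, not the normalization of $\Phi$, is the source of the $\sqrt2$.

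Your claimed constant $(2|t+1|)^{-1/2}$ is also too strong for Bamler's $\Phi$. The Euclidean half-space solution $\Phi\bigl(x_1/\sqrt{t+1}\bigr)$ shows that $|t+1|^{-1/2}$ is sharp. The corrected argument yields $R(y,0)\ge u(y,0)\ge n\Phi\bigl(\Phi^{-1}(\delta/n)-\sqrt2A\bigr)$ for $y\in B_{g_0}(x,A)$. The displayed formula in the statement appears to have the $\sqrt2$ misplaced: as written, at $A=0$ it would give $R(x,0)\ge\sqrt2\,\delta$, which fails on a large shrinking round sphere. This is harmless for the application in \S 5.
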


\begin{proof}
    By parabolic scaling, we may assume $r=1$. Consider the solution to the heat equation:
    \begin{align*}
        u:M\times[-1,0]\to &\ \mathbb{R},
        \\
        (y,s) \mapsto &\ \int_MR_{g_{-1}}\,d\nu_{y,s\,;\,-1}.
    \end{align*}
Since $u(\cdot,-1)=R_{g_{-1}}$, and the scalar curvature is a super-solution of the heat equation
\begin{align*}
    (\partial_t-\Delta)R_{g_t}=2\left|\Ric_{g_t}\right|^2\ge 0,
\end{align*}
we have
\begin{align*}
    R_{g_s}\ge u(\cdot,s)\quad \text{ on }\quad M\times[-1,0].
\end{align*}

By \cite[Proposition 5.13]{Bam20a},
\begin{align*}
    u(y,s)\in(0,n)\quad \text{ for all }\ (y,s)\in M\times[-1/2,0].
\end{align*}
Applying \cite[Theorem 4.1]{Bam20a} to $\frac{1}{n}u(\cdot,0)$, we have that $\frac{1}{\sqrt{2}}\Phi^{-1}\left(\frac{1}{n}u(\cdot,0)\right)$ is $1$-Lipschitz. The conclusion follows immediately.
    
\end{proof}

\subsection{Global lower bound for the scalar curvature}

Let us now consider the ancient solution in the statement of Proposition \ref{Prop:pinching compact}. Let $(\mathcal{X},\nu_t)$ be the tangent flow obtained in \eqref{def-limiting-condition}. Let $(\mathcal{R}_X,\fg,f_0)$ be the regular part of the model. In particular $(\mathcal{R}_{-1},\fg_{-1})=(\mathcal{R}_X,\fg)$. By the assumption of Proposition \ref{Prop:pinching compact} and Theorem \ref{thmR>}, we assume
\begin{align*}
    R_{\fg}\ge \delta>0\quad \text{ on }\quad \mathcal{R}_X=\mathcal{R}_{-1}.
\end{align*}

Pick $z_i\in M$, such that  $(z_i,-1/2)$ is an $H_n$-center of $(y_0,0)$ with respect to the rescaled flow $g^i_t$ (or, in other words, $(z_i,-\lambda_i/2)$ is an $H_n$-center with respect to the unscaled flow $g_t$). 

\begin{Lemma}\label{Lm:global scalar lower bound}
    For any $A<\infty$, we have
    \begin{align*}
        R_{g^i_{-1/2}}\ge \delta/4\qquad \text{on}\qquad B_{g^i_{-1/2}}(z_i,A)
    \end{align*}
    for all $i$ large enough.
\end{Lemma}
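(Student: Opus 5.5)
The strategy is to apply Lemma \ref{Lm:local bound for scalar curvature} on the rescaled flow $g^i$ over the time interval $[-1,-1/2]$, so $r^2=1/2$. It then suffices to show that for every $x\in B_{g^i_{-1/2}}(z_i,A)$ the averaged scalar curvature $\int_M R_{g^i_{-1}}\,d\nu^i_{x,-1/2\,;\,-1}$ is bounded below by a definite multiple of $\delta$, uniformly in $x$ and for all large $i$. The lemma only gives a bound on a ball of radius $A'r$ whose constant deteriorates with $A'$. So we do not apply it once from $z_i$. Instead we apply it at each point $x$ separately, with $A'=0$ (or a tiny radius), and the needed lower bound on the integral is supplied by the convergence to the tangent flow, where $R_{\fg}\ge\delta$ on $\mathcal{R}_{-1}$.

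The key steps are as follows.

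First, given $x_i\in B_{g^i_{-1/2}}(z_i,A)$, we have $d_{W_1}(\delta_{x_i},\nu^i_{y_0,0;-1/2})\le A+\sqrt{2H_n}$. By Theorem \ref{Thm:compactness of points}, after passing to a subsequence, $\nu^i_{x_i,-1/2;t}\to\tilde\nu_t$ within $\mathfrak{C}$, where $\tilde\nu_t$ is a conjugate heat flow on the soliton. By Lemma \ref{Lm:timewise} the convergence is time-wise at $t=-1$.

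Second, we show that
\[
\liminf_i\int R_{g^i_{-1}}\,d\nu^i_{x_i,-1/2;-1}\ge \int_{\mathcal{R}_{-1}}R_{\fg_{-1}}\,d\tilde\nu_{-1}\ge \delta,
\]
using the scaling $\fg_{-1}=\fg$. To do this, write $d\tilde\nu_{-1}=\tilde v\,d\fg$. By Theorem \ref{Thm_smooth_convergence}(d), $\tilde v^i\circ\psi_i\to\tilde v$ in $C^\infty_{loc}$ on $\mathcal{R}$, and $\psi_i^*R_{g^i}\to R_{\fg}$ locally smoothly. Since $R\ge0$, Fatou's lemma on an exhaustion by compact sets $K\Subset\mathcal{R}_{-1}$ gives the first inequality. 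The second follows because $\tilde\nu_{-1}(\mathcal{S}_{-1})=0$, so $\tilde\nu_{-1}$ is a probability measure carried by the regular part, where $R\ge\delta$.

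Third, we argue by contradiction to get uniformity. If the conclusion failed, there would be points $x_i$ in the balls with $R_{g^i_{-1/2}}(x_i)<\delta/4$ along a subsequence. Since the heat kernel integral is at least $\delta/2$ for large $i$ by the second step, Lemma \ref{Lm:local bound for scalar curvature} gives at $x_i$ a lower bound $\sqrt2\, n\,\Phi\bigl(\tfrac1{\sqrt2}\Phi^{-1}(\delta/(2n))\bigr)\cdot 2$. Comparing this with $\delta/4$ requires an elementary Gaussian inequality $\sqrt2\,\Phi(\Phi^{-1}(s)/\sqrt2)\gtrsim s$. Rather than relying on it, the cleaner route is to bypass the $\Phi$ form and use directly that $R\ge u$, where $u$ is the heat solution with initial data $R_{g^i_{-1}}$. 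Then $u(x_i,-1/2)=\int R\,d\nu_{x_i,-1/2;-1}\ge \delta/2>\delta/4$ for large $i$, which contradicts $R_{g^i_{-1/2}}(x_i)<\delta/4$.

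The main obstacle is the second step. It requires justifying that the limit conjugate heat flow $\tilde\nu_{-1}$ has density $\tilde v$ on $\mathcal{R}_{-1}$ with full mass there, and that the correspondence convergence really yields the local smooth convergence of the densities. For this we rely on Theorem \ref{Thm_smooth_convergence}(d), together with the fact that the singular set is $\tilde\nu_{-1}$-null. The latter holds for any conjugate heat flow on the limit by Bamler's theory, since conjugate heat kernels are absolutely continuous with respect to $d\fg$ on $\mathcal{R}$. One must also check that $\tilde\nu_t$ is defined at $t=-1$, which follows from the time-wise convergence at every time in Lemma \ref{Lm:timewise}.
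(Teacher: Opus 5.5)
Your proof is correct, but it handles the uniformity over the ball by a different and simpler route than the paper. The core estimate is the same as the claim inside the paper's proof: $\liminf_i\int_M R_{g^i_{-1}}\,d\nu^i_{x_i,-1/2\,;\,-1}\ge\delta$ for an arbitrary sequence $x_i\in B_{g^i_{-1/2}}(z_i,A)$, obtained from Theorem~\ref{Thm:compactness of points}, Theorem~\ref{Thm_smooth_convergence}(d) and Fatou's lemma on compact subsets of $\mathcal{R}_{-1}$.

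The difference is how you pass from this sequential statement to a bound valid on the whole ball. The paper proceeds in three steps:
\begin{enumerate}
\item It covers $B_{g^i_{-1/2}}(z_i,A)$ by a bounded number of balls of fixed radius $r=r(\delta)$, using Bishop--Gromov.
\item It applies the claim to the finitely many sequences of centres.
\item It spreads the bound from each centre over its ball via Lemma~\ref{Lm:local bound for scalar curvature}, i.e.\ Bamler's gradient estimate for $\Phi^{-1}$ of a bounded heat solution together with $u<n$ from \cite[Proposition 5.13]{Bam20a}.
\end{enumerate}
The Gaussian inequality you worried about enters only there, to make the choice \eqref{eq:choice of r} possible. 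It is immediate anyway, since $\Phi^{-1}(s)\le 0$ for $s\le 1/2$.

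You instead argue by contradiction. The bad points form a sequence, along a subsequence of the still $\mathbb{F}$-convergent flows, to which the claim applies. Evaluating the supersolution comparison $R_{g^i_{-1/2}}(x)\ge\int_M R_{g^i_{-1}}\,d\nu^i_{x,-1/2\,;\,-1}$ at the bad point itself then gives the contradiction. This removes the covering (and with it the use of $\Ric\ge 0$), Lemma~\ref{Lm:local bound for scalar curvature}, and the heat-kernel gradient estimate. It even yields $R\ge(1-\eta)\delta$ on $B_{g^i_{-1/2}}(z_i,A)$ for any $\eta>0$ and $i$ large, instead of $\delta/4$.

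The paper's route isolates a reusable local estimate. Since the claim is itself obtained by compactness and is non-effective, that route buys nothing extra for this lemma.

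One small correction: $\tilde\nu_{-1}(\mathcal{S}_{-1})=0$ does not follow from the density being smooth on $\mathcal{R}_{-1}$, since that says nothing about mass carried by $\mathcal{S}_{-1}$. It is a separate input from Bamler's structure theory \cite{Bam20b}, which the paper also uses without comment.
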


\begin{proof}
First of all, we show that the integral of the scalar curvature is not small if $i$ is large enough.
\\

\textbf{Claim.} \emph{For any sequence $\{x_i\}_{i=1}^\infty$ with $x_i\in B_{g^i_{-1/2}}(z_i,A)$, we have}
\begin{equation*}%\label{ineq liminf>}
        \liminf_{i\to\infty}\int_MR_{g{^i_{-1}}}d\nu_{x_i,-1/2;-1}^i\ge  \delta.
    \end{equation*}
\begin{proof}[Proof of the claim]
     For any  $x_i\in B_{g^i_{-1/2}}(z_i,A)$, we have, by  \cite[Definition 3.10]{Bam20a} 
    \begin{equation*}%\label{ineq dW1}
        d_{W_1}^{g^i_{-1/2}}(\delta_{x_i},\nu^i_{y_0,0;-1/2})\le d_{W_1}^{g^i_{-1/2}}(\delta_{x_i},\delta_{z_i})+d_{W_1}^{g^i_{-1/2}}(\delta_{z_i},\nu^i_{y_0,0;-1/2})\le A+\sqrt{\tfrac{H_n}{2}}.
    \end{equation*}
    Thus for any subsequence of $\{x_i\}_{i=1}^\infty$, according to Theorem \ref{Thm:compactness of points}, we may find a conjugate heat flow $(\tilde\nu_t)_{t<-1/2}$ on $\mathcal{X}$, such that, after passing to a further subsequence,
    \begin{align*}
       \left( \nu^i_{x_i,-1/2\,;\,t}\right)_{t\le -1/2}\xrightarrow[i\to\infty]{\mathfrak{C}} (\tilde{\nu}_{t})_{t<-1/2}.
    \end{align*}
    By Theorem \ref{Thm_smooth_convergence}(d), we have
    \begin{align*}
        \nu^i_{x_i,-1/2;-1}\xrightarrow[i\to\infty]{C_{\loc}^{\infty}}\tilde\nu_{-1}\quad \text{ on }\quad \mathcal{R}_{-1},
    \end{align*}
    where the local smooth convergence is also understood in terms of Theorem \ref{Thm_smooth_convergence}(d). Since $R_{\fg}\ge \delta$ on $\mathcal{R}_{-1}$ and $\tilde\nu_{-1}(\mathcal{S}_{-1})=0$, by Fatou's lemma we have
       \begin{equation*}
        \liminf_{i\to\infty}\int_MR_{g{^i_{-1}}}d\nu_{x_i,-1/2;-1}^i\ge \int_{\mathcal{R}_{-1}}R_{\fg}\,d\tilde\nu_{-1}\ge \delta.
    \end{equation*}
    The conclusion then follows.
\end{proof}

Let us fix $r=r(\delta)>0$, such that
\begin{align}\label{eq:choice of r}
    2n\sqrt{2}\Phi\left(\frac{1}{\sqrt{2}}\Phi^{-1}\left(\frac{\delta}{4n}\right)-\sqrt{2}r\right)>\frac{\delta}{4}.
\end{align}
   Then, for each $i$, we pick a finite sequence 
    $\left\{x_k^{(i)}\right\}_{k=1}^{N_i}$ in $B^i_{-1/2}(z_i,A)$, such that
    \begin{align*}
        B_{g^i_{-1/2}}(z_i,A)\subset\bigcup_{k=1}^{N_i} B_{g^i_{-1/2}}\left(x_k^{(i)},r\right),\qquad N_i\le\overline{N}(A,r), 
    \end{align*}
    by the Bishop-Gromov volume comparison,  where $\overline{N}$ is independent of $i$. By passing to a subsequence, we may assume $N_i\equiv N$.

Applying the claim to the sequence $\left\{x_k^{(i)}\right\}_{i=1}^\infty$ for $k=1,2,\hdots,N$, we obtain that whenever $i$ is large enough
\begin{align*}
    \int_MR_{g{^i_{-1}}}d\nu_{x^{(i)}_k,-1/2;-1}^i\ge \delta/2,\quad k=1,2,\hdots, N.
\end{align*}
Applying Lemma \ref{Lm:local bound for scalar curvature} with $r^2\to 1/2$, $\delta\to \delta/4$, $A\to \sqrt{2}r$, and in view of the choice of $r$ \eqref{eq:choice of r}, we have that, when $i$ is large enough
\begin{align*}
    R_{g^i_{-1/2}}\ge \frac{\delta}{4}\quad \text{ on }\quad B_{g^i_{-1/2}}\left(x_k^{(i)},r\right)\quad \text{ for all }\quad k=1,2,\hdots,N .
\end{align*}
    The conclusion follows immediately.
\end{proof}

\subsection{Completion of the proof}

\begin{proof}[Proof of Proposition \ref{Prop:pinching compact}]
  
By Lemma \ref{Lm:global scalar lower bound}, for any $A<\infty$, we have $\Ric_{g^i_{-1/2}}\ge\tfrac{1}{4}\varepsilon\delta$ on $B_{g^i_{-1/2}}(z_i,A)$ for all $i$ large enough, where $z_i$ is the point defined at the beginning of \S 5.2.  If we pick $A\ge 100\sqrt{\tfrac{4\pi^2(n-1)}{\varepsilon\delta}}$, then by Bonnet-Myers theorem, no minimal geodesic in $(M,g^i_{-1/2})$ emanating from $z_i$ can be longer than $A/100$. This cannot happen unless $M$ is closed.
\end{proof}

\bigskip

%\bibliography{bibliography}{}

\begin{thebibliography}{CCG{\alphalchar{+}}10}
% \bibitem[A90]{A90} Anderson, Michael T. \emph{Convergence and rigidity of manifolds under Ricci curvature bounds.} Invent. math. \textbf{102}, no. 1 (1990): 429-445.


% \bibitem[AC92]{AC92} Anderson, Michael T.; Cheeger, Jeff. \emph{$ C^\alpha $-compactness for manifolds with Ricci curvature and injectivity radius bounded below.} J. Differential Geom. \textbf{35}, no. 2 (1992): 265-281.

% \bibitem[CC96]{CC96}
% Cheeger, Jeff; Colding, Tobias H. 
% \emph{Lower bounds on Ricci curvature and the almost rigidity of warped products}.
% Ann. of Math. (2) \textbf{144} (1996), no. 1, 189–237.

% \bibitem[CC97]{CC97}
% Cheeger, Jeff; Colding, Tobias H. 
% \emph{On the structure of spaces with Ricci curvature bounded below. I}. J. Differential Geom. \textbf{46} (1997), no. 3, 406–480. 

% \bibitem[CC20a]{CC20a}
% Cheeger, Jeff; Colding, Tobias H. 
% \emph{On the structure of spaces with Ricci curvature bounded below. II}. J. Differential Geom. \textbf{54} (2000), no. 1, 13–35. 

% \bibitem[CC20b]{CC20b}
% Cheeger, Jeff; Colding, Tobias H. 
% \emph{On the structure of spaces with Ricci curvature bounded below. III}. J. Differential Geom. \textbf{54} (2000), no. 1, 37–74. 


% \bibitem[CCT02]{CCT02}
% Cheeger, Jeff; Colding, Tobias H.; Tian, Gang.
% \emph{On the singularities of spaces with bounded Ricci curvature}. Geom. Funct. Anal. \textbf{12} (2002), no. 5, 873–914. 


% \bibitem[CJN21]{CJN21}
% Cheeger, Jeff; Jiang, Wenshuai; Naber, Aaron.  \emph{Rectifiability of singular sets of noncollapsed limit spaces with Ricci curvature bounded below}. Ann. of Math. (2) \textbf{193} (2021), no. 2, 407–538.
 



% \bibitem[CN13]{CN13} Cheeger, Jeff; Naber, Aaron. \emph{Lower bounds on Ricci curvature and quantitative behavior of singular sets}. Invent. Math. \textbf{191} (2013), no. 2, 321–339.




% \bibitem[GT]{GT}
% Gilbarg, David; Trudinger, Neil S. 
% \emph{Elliptic partial differential equations of second order.} Vol. 224. Springer Science \& Business Media, 2001.


% \bibitem[HH97]{HH97} Hebey, Emmanuel; Herzlich, Marc. \emph{Harmonic coordinates, harmonic radius and convergence of Riemannian manifolds.} Rend. Mat. Appl.(7) \textbf{17}, no. 4 (1997): 569-605.

%\bibitem[ABDS22]{ABDS22}  Angenent, Sigurd; Brendle, Simon; Daskalopoulos, Panagiota; Sesum, Natasa. \emph{Unique asymptotics of compact ancient solutions to three-dimensional Ricci flow.} Comm. Pure Appl. Math. 75 (2022), no. 5, 1032-–1073.

%\bibitem[App19]{App19} Appleton, Alexander. \emph{Eguchi-Hanson singularities in $U(2)$-invariant Ricci flow}. arXiv preprint arXiv:1903.09936 (2019).

%\bibitem[Bam18]{Bam18} Bamler, Richard~H. \emph{Long-time behavior of $3$-dimensional Ricci flow -- Introduction}. Geometry \& Topology 22-2 (2018), 757--774



\bibitem[Bam20a]{Bam20a}
Bamler, Richard~H. \emph{Entropy and heat kernel bounds on a Ricci flow background}.
arXiv preprint arXiv:2008.07093 (2020).


  
\bibitem[Bam20b]{Bam20b} \bysame, \emph{Structure theory of non-collapsed limits of Ricci flows}. arXiv preprint  arXiv:2009.03243 (2020).


\bibitem[Bam23]{Bam23}\bysame, \emph{Compactness theory of the space of super Ricci flows.} Invent. Math. 233 (2023), no. 3, 1121–1277. 


%\bibitem[BCMZ21]{BCMZ21} Bamler, Richard H.; Chan, Pak-Yeung; Ma, Zilu; Zhang, Yongjia. \emph{ An optimal volume growth estimate for noncollapsed steady gradient Ricci solitons}. arXiv preprint  arXiv:2110.04661 (2021).
 
%\bibitem[BCDMZ21]{BCDMZ21} Bamler, Richard~H.; Chow, Bennett; Deng, Yuxing; Ma, Zilu; Zhang, Yongjia, \emph{ Four-dimensional steady gradient Ricci solitons with $3$-cylindrical tangent flows at infinity}. Adv. Math. 401 (2022), Paper No. 108285, 21 pp.

%\bibitem[BK22]{BK22} Bamler, Richard H.; Kleiner, Bruce. \emph{ Ricci flow and diffeomorphism groups of $3$-manifolds.} J. Amer. Math. Soc. DOI: https://doi.org/10.1090/jams/1003 Published electronically: August 12, 2022.

%\bibitem[BK21a]{BK21a} Bamler, Richard H.; Kleiner, Bruce. \emph{ Diffeomorphism groups of prime $3$-manifolds.} arXiv preprint arXiv:2108.03302 (2021).

%\bibitem[BK21b]{BK21b} Bamler, Richard H.; Kleiner, Bruce. \emph{On the rotational symmetry of $3$-dimensional $\kappa$-solutions.} J. Reine Angew. Math. 779 (2021), 37-–55.

%\bibitem[Bre13]{Bre13} Brendle, Simon. \emph{Rotational symmetry of self-similar solutions to the Ricci flow}. Inventiones Mathematicae 194, 731--764 (2013).

\bibitem[Bre09]{Bre09} Brendle, Simon. \emph{A generalization of Hamilton's differential Harnack inequality for the Ricci flow}. J. Differential Geom. 82(1): 207-227. DOI: 10.4310/jdg/1242134372

%\bibitem[Bre20]{Bre20} Brendle, Simon. \emph{Ancient solutions to the Ricci flow in dimension $3$}. Acta Mathematica 225, 1--102 (2020).

%\bibitem[BDS21]{BDS21} Brendle, Simon; Daskalopoulos, Panagiota; Sesum, Natasa. \emph{Uniqueness of compact ancient solutions to three-dimensional Ricci flow.} Invent. Math. 226 (2021), no. 2, 579–651.

\bibitem[Chow23]{Chow23} Chow, Bennett; Ricci solitons in low dimensions, GTM235


%\bibitem[CN09]{CN09} Carrillo, Jos\'e A., and Ni,Lei. \emph{Sharp logarithmic Sobolev inequalities on gradient solitons and applications.} Communications in Analysis and Geometry 17.4 (2009): 721-753.

\bibitem[CZ11]{CZ11} Cao, Xiaodong; Zhang, Qi S.  \emph{The conjugate heat equation and ancient solutions of the Ricci flow.} Adv. Math. \textbf{228} (2011), no. 5, 2891–2919.

\bibitem[CLY11]{CLY11} Chow, Bennett; Lu, Peng; Yang, Bo. \emph{ Lower bounds for the scalar curvatures of noncompact gradient Ricci solitons}. C. R. Math. Acad. Sci. Paris 349 (2011), no. 23-24, 1265–1267.

\bibitem[CLY12]{CLY12}Chow, Bennett; Lu, Peng; Yang, Bo. \emph{A necessary and sufficient condition for Ricci shrinkers
to have positive AVR}. Proc. Amer. Math. Soc. 140 (2012), no. 6, 2179–2181, DOI 10.1090/S0002-9939-2011-11173-0. MR2888203

\bibitem[CMZ25]{CMZ25} Chan, Pak-Yeung; Ma, Zilu; Zhang, Yongjia. \emph{Ancient Ricci flows with asymptotic solitons.}J Geom Anal 35, 199 (2025). https://doi.org/10.1007/s12220-025-02034-z

%\bibitem[CMZ22]{CMZ22} Chan, Pak-Yeung; Ma, Zilu; Zhang, Yongjia. \emph{A local gap theorem for Ricci shrinkers.} arXiv preprint 	arXiv:2212.09203 (2022).

%\bibitem[CMZ23]{CMZ23} Chan, Pak-Yeung; Ma, Zilu;  Zhang, Yongjia. \emph{A uniform Sobolev inequality for ancient Ricci flows with bounded Nash entropy.} International Mathematics Research Notices 2023.13 (2023): 11127-11144.

%\bibitem[CMZ23a]{CMZ23a} Chan, Pak-Yeung; Ma, Zilu; Zhang, Yongjia. \emph{A local Sobolev inequality on Ricci flow and its applications.} J. Funct. Anal. 285 (2023), no. 5, Paper No. 109995, 36 pp.

\bibitem[CMZ24]{CMZ23} Chan, Pak-Yeung; Ma, Zilu; Zhang, Yongjia. \emph{On noncollapsed $\bF$-limit of metric solitons.} arXiv preprint arXiv:2401.03387 (2023)

%\bibitem[CCMZ23]{CCMZ23} Chan, Pak-Yeung; Chow, Bennett; Ma, Zilu;  Zhang, Yongjia. \emph{ Lower bounds for the scalar curvatures of Ricci flow singularity models.} Journal f\"ur die reine und angewandte Mathematik (Crelles Journal), 2023(794), 253-265.

%\bibitem[CN15]{CN15} Cheeger, Jeff; Naber, Aaron. \emph{Regularity of Einstein manifolds and the codimension }$4$\emph{ conjecture}.   Ann. of Math. (2) \textbf{182} (2015), no. 3, 1093–1165.

%\bibitem[Che09]{Che09}Chen, Bing-Long,\emph{Strong uniqueness of the Ricci flow}. J. Differential Geom. 82 (2009), 363--382.


%\bibitem[CCGGIIKLLN10]{RFV3} Chow, B.; Chu, S.; Glickenstein, D.; Guenther, C.; Isenberg, J.; Ivey, T.; Knopf, D.; Lu, P.; Luo, F.; Ni, L. \emph{The Ricci flow: techniques and applications. Part III. Geometric-Analytic Aspects}, Mathematical Surveys and Monographs,  vol. \textbf{163}, AMS, Providence, RI, 2010.

%\bibitem[CFSZ20]{CFSZ20} Chow, Bennett; Freedman, Michael; Shin, Henry; Zhang, Yongjia \emph{ Curvature growth of some $4$-dimensional gradient Ricci soliton singularity models}. Adv. Math. 372 (2020), 107303, 17 pp.


%\bibitem[CLY11]{CLY11} Chow, Bennett; Lu, Peng; Yang, Bo \emph{ Lower bounds for the scalar curvatures of noncompact gradient Ricci solitons}. C. R. Math. Acad. Sci. Paris 349 (2011), no. 23-24, 1265–1267. 

%\bibitem[CCGGIIKLLN07]{RFV1}Chow, B.; Chu, S.; Glickenstein, D.; Guenther, C.; Isenberg, J.; Ivey, T.; Knopf, D.; Lu, P.; Luo, F.; Ni, L. \emph{The Ricci flow: techniques and applications. Part I. Geometric Aspects}, Mathematical Surveys and Monographs,  vol. \textbf{135}, AMS, Providence, RI, 2007.

%\bibitem[CCGGIIKLLN08]{RFV2}Chow, B.; Chu, S.; Glickenstein, D.; Guenther, C.; Isenberg, J.; Ivey, T.; Knopf, D.; Lu, P.; Luo, F.; Ni, L. \emph{The Ricci flow: techniques and applications. Part II. Analytic Aspects}, Mathematical Surveys and Monographs,  vol. \textbf{144}, AMS, Providence, RI, 2008.






%\bibitem[CCGGIIKLLN15]{RFV4} Chow, B.; Chu, S.; Glickenstein, D.; Guenther, C.; Isenberg, J.; Ivey, T.; Knopf, D.; Lu, P.; Luo, F.; Ni, L. \emph{The Ricci Flow: Techniques and Applications. Part IV: Long-Time Solutions and Related Topics}, Mathematical Surveys and Monographs, vol. \textbf{206}, AMS, Providence, RI, 2015.
 
\bibitem[CZ10]{CZ10}Cao, Huai-Dong; Zhou, De-Tang. \emph{On complete gradient shrinking Ricci solitons.} J. Differential Geom. \textbf{85} (2010), 175--186. 

% \bibitem[CLN06]{CLN06}  Bennett Chow, Peng Lu and Lei Ni, \emph{Hamilton's Ricci flow}, Lectures in Contemporary Mathematics, \textbf{3}, Science Press and Graduate Studies in Mathematics, \textbf{77}, American Mathematical Society (co-publication), 2006.

%\bibitem[DZ18]{DZ18}  Deng, Yuxing; Zhu, Xiaohua. \emph{Asymptotic behavior of positively curved steady Ricci solitons}, Trans. Amer. Math. Soc. \textbf{370} (2018), 2855-2877.

\bibitem[DZ15]{DZ15} Deng, Yuxing; Zhu, Xiaohua. \emph{Complete non-compact gradient Ricci solitons with nonnegative Ricci curvature}.
Math. Z. 279 (2015), 211–226.

%\bibitem[DZ20a]{DZ20a} Deng, Yuxing; Zhu, Xiaohua. \emph{Higher dimensional steady Ricci solitons with linear curvature decay.} J. Eur. Math. Soc. (JEMS) \textbf{22} (2020), no. 12, 4097–4120.

%\bibitem[DZ20b]{DZ20b} Deng, Yuxing; Zhu, Xiaohua. \emph{Classification of gradient steady Ricci solitons with linear curvature decay}, Sci. China Math. \textbf{63} (2020), no. 1, 135–154.

\bibitem[FL25]{FL25} Fang, Hanbing; Li, Yu. \emph{On the structure of noncollapsed Ricci flow limit spaces}, arXiv preprint 	arXiv:2510.12398(2025)


%\bibitem[FIK03]{FIK03} Feldman, Mikhail,  Ilmanen, Tome, and Knopf, Dan. \emph{Rotationally symmetric shrinking and expanding gradient K\"ahler-Ricci solitons}. Journal of Differential Geometry 65.2 (2003): 169-209.

\bibitem[Ham82]{Ham82} Hamilton, Richard S, \emph{Three-manifolds with positive Ricci curvature}. Journal of Differential geometry 17.2 (1982): 255-306.

%\bibitem[Ham88]{Ham88} Hamilton, Richard S.  \emph{The Ricci flow on surfaces}. Mathematics and general relativity (Santa Cruz, CA, 1986), 237--262, Contemp. Math., 71, Amer. Math. Soc., Providence, RI, 1988.


%\bibitem[Ham93a]{Ham93a} Richard S. Hamilton,  \emph{The Harnack estimate for the Ricci flow}, J. Differential Geom., 1993, 37(1): 225-243.

\bibitem[Ham93]{Ham93} Hamilton, Richard S.  \emph{The formation of singularities in the Ricci flow.\ }Surveys in differential geometry, Vol.\ II (Cambridge, MA, 1993), 7--136, Internat. Press, Cambridge, MA, 1995.

\bibitem[Ham94]{Ham94} Hamilton, Richard S.  \emph{Convex hypersurfaces with pinched second fundamental form.} Comm. Anal. Geom.
24(1994), vol 2., no.1, 167–72.

%\bibitem[Ham95]{Ham95} Hamilton, Richard S. \emph{The formation of singularities in the Ricci flow}, Surveys in Differential Geometry, \textbf{2} (1995): 7-136

%\bibitem[Han20]{Han20} Han, Daoyuan. \emph{ Asymptotic curvature estimate for steady solitons}. arXiv preprint arxiv:2009.04665 (2020).
\bibitem[HM11]{HM11} Haslhofer, Robert; M\"uller, Reto. \emph{A compactness theorem for complete Ricci shrinkers}, Geom.
Funct. Anal. 21 (2011), no. 5, 1091–1116, DOI 10.1007/s00039-011-0137-4. MR2846384

% \bibitem[KL08]{KL08} Bruce Kleiner and John Lott, \emph{Notes on Perelman's papers}, Geom. Topol.\textbf{12} (2008), no. 5, 2587--2855.

%\bibitem[Kot13]{Kot13} Kotschwar, Brett.\emph{A local version of Bando's theorem on the real-analyticity of solutions to the Ricci flow}. Bull. Lond. Math. Soc. 45 (2013), no.1, 153–158.

%\bibitem[Lai20]{Lai20} Lai, Yi. \emph{A family of 3d steady gradient solitons that are flying wings.} arXiv preprint arXiv:2010.07272 (2020), J. Differential Geom., to appear.

%\bibitem[Lai22]{Lai22}Lai, Yi. \emph{3D flying wings for any asymptotic cones}. arXiv preprint  arXiv:2207.02714 (2022).

%\bibitem[Lee13]{Lee13}Lee, John M. \emph{Introduction to smooth manifolds}, Second edition, Grad. Texts in Math., 218 Springer, New York, 2013.

\bibitem[LT25]{LT25}Lee, Man-Chun; Topping, Peter M. \emph{Three-manifolds with non-negatively pinched Ricci curvature.} J. Differential Geom. 131 (3) 633 - 651, November 2025. https://doi.org/10.4310/jdg/1760725985

%\bibitem[LW20]{LW20} Li, Yu; Wang, Bing. \emph{Heat kernel on Ricci shrinkers}. Calc. Var. Partial Differential Equations 59 (2020), no. 6, Paper No. 194, 84 pp.

\bibitem[LW24]{LW24}Li, Yu; Wang, Bing. \emph{Heat kernel on Ricci shrinkers (II).} Acta Mathematica Scientia 44.5 (2024): 1639-1695.

%\bibitem[MZ21]{MZ21} Ma, Zilu; Zhang, Yongjia. \emph{  Perelman's entropy on ancient Ricci flows}. J. Funct. Anal. 281 (2021), no. 9, Paper No. 109195, 31 pp.

%\bibitem[M20]{M20} Boris Mityagin, \emph{The Zero Set of a Real Analytic Function}. (Russian) Mat. Zametki 107 (2020), no.3, 473–475; translation in Math. Notes 107 (2020), no.3-4, 529–530. 	
%arXiv preprint, arXiv:1512.07276.

%\bibitem[MSW19]{MSW19} Munteanu, Ovidiu; Sung, Chiung-Jue Anna; Wang, Jiaping. \emph{  Poisson equation on complete manifolds}. Adv. Math. 348 (2019), 81–-145.

%\bibitem[MW12]{MW12} Ovidiu Munteanu, and Jiaping Wang, \emph{Analysis of weighted Laplacian and applications to Ricci solitons}, Communications in Analysis and Geometry 20, no. 1 (2012): 55-94.

%\bibitem[MW19]{MW19} Munteanu, Ovidiu; Wang, Jiaping. \emph{ Structure at infinity for shrinking Ricci solitons}. Annales Scientifiques de l'Ecole Normale Superieure 52 (2019), 891--925.

\bibitem[Nab10]{Nab10}Naber, Aaron. \emph{Noncompact shrinking four solitons with nonnegative curvature.} Journal f\"ur die Reine und Angewandte Mathematik 645 (2010): 125-153.

\bibitem[Ni05]{Ni05} Ni, Lei. \emph{Ancient solutions to K\"ahler Ricci Flow}. Mathematical Research Letters 12 (2005), 633–654

\bibitem[Ni25]{Ni25} Ni, Lei.  \emph{Is a complete Riemannian manifold with positively pinched Ricci curvature compact} arXiv preprint arXiv:2510.10279

\bibitem[Per02]{Per02} Perelman, Grisha, \emph{The entropy formula for the Ricci flow and its geometric applications}, arXiv:math.DG/0211159 (2002).

\bibitem[Per03a]{Per03a} Perelman, Grisha, \emph{Ricci flow with surgery on three-manifolds}, arXiv:math.DG/0303109 (2003).

\bibitem[Per03b]{Per03b} Perelman, Grisha. \emph{Finite extinction time for the solutions to the Ricci flow on certain three-manifolds}, arXiv preprint math/0307245 (2003).

%\bibitem[SS72]{SS72} Sou\v{c}ek, Ji\v{r}\'{\i}; Sou\v{c}ek, Vladim\'{\i}r. \emph{Morse-Sard theorem for real-analytic functions}. Commentationes Mathematicae Universitatis Carolinae, Vol. 13 (1972), No. 1, 45-51

%\bibitem[W18]{W18} Wang, Bing, \emph{The local entropy along Ricci flow Part A: the no-local-collapsing theorems.} Camb. J. Math. \textbf{6} (2018), no. 3, 267–346.

%\bibitem[Yo09]{Y09} Yokota, Takumi. \emph{Perelman's reduced volume and a gap theorem for the Ricci flow.} Communications in Analysis and Geometry 17.2 (2009): 227-263.

%\bibitem[Yo12]{Y12} Yokota, Takumi. \emph{Addendum to `Perelman’s reduced volume and a gap theorem for the Ricci flow'.} Communications in Analysis and Geometry 20.5 (2012): 949-955.

%\bibitem[Zh10]{Zh10} Zhang, Qi S. \emph{Sobolev inequalities, heat kernels under Ricci flow, and the Poincaré conjecture.} CRC Press, 2010.

\bibitem[Zhang11]{Zhang11} Zhang, Shijin; \emph{On a sharp volume estimate for gradient Ricci solitons with scalar curvature bounded below,} Acta Math. Sin. (Engl. Ser.) 27(2011), no. 5, 871–882, DOI 10.1007/s10114011-9527-7. MR2786449

\bibitem[Zhang20]{Zhang20} Zhang, Yongjia; \emph{On the equivalence between noncollapsing and bounded entropy for ancient solutions to the Ricci flow}, Journal für die reine und angewandte Mathematik (Crelle's Journal), 762(2020), 35-51.


\end{thebibliography}
\bibliographystyle{amsalpha}

\newcommand{\alphalchar}[1]{$^{#1}$}
\providecommand{\bysame}{\leavevmode\hbox to3em{\hrulefill}\thinspace}
\providecommand{\MR}{\relax\ifhmode\unskip\space\fi MR }
% \MRhref is called by the amsart/book/proc definition of \MR.
\providecommand{\MRhref}[2]{%
  \href{http://www.ams.org/mathscinet-getitem?mr=#1}{#2}
}

\noindent School of Mathematics and Statistics, Beijing Institute of Technology, Beijing, 100081, China
\\ E-mail address: \verb"6120180026@bit.edu.cn"
\\

\noindent School of Mathematics and Statistics, Beijing Institute of Technology, Beijing, 100081, China
\\ E-mail address: \verb"ggwgq5986@sina.com"
\\

\noindent School of Mathematical Sciences, Shanghai Jiao Tong University, Shanghai, 200240, China
\\ E-mail address: \verb"sunzhang91@sjtu.edu.cn"

\end{document}